\documentclass[11pt]{article}
\usepackage{graphicx} % Required for inserting images
\usepackage{enumitem}
\usepackage{amssymb}
\title{Path-Dependent Ergodic Optimal Control \\ and Backward Stochastic Differential Equations}
\author{Xuyang Lin \thanks{New York University, Tandon School of Engineering, Dept. \hspace{-7pt} of Financial and Risk Engineering}~\footnote{Email: xuyang.lin@nyu.edu} \and Mathieu Lise\footnotemark[1]~\footnote{Email: mathieu.lise@nyu.edu} \and Nizar Touzi \footnotemark[1]~\footnote{This author is partially supported by NSF grant $\#$DMS-2508581. Email: nizar.touzi@nyu.edu}}
\date{}

\usepackage[backend=biber, maxnames=99]{biblatex}

\AtBeginBibliography{\fontsize{9.8}{11.8}\selectfont}

\usepackage{geometry}
\usepackage{hyperref}
\hypersetup{
    colorlinks=true,
    linkcolor=blue,
    citecolor=magenta,
    filecolor=magenta,      
    urlcolor=cyan,
    pdftitle={Overleaf Example},
    pdfpagemode=FullScreen,
    }

\usepackage{amsthm}
\usepackage{setspace}
\usepackage{amsmath}
\usepackage{amsfonts}
\usepackage{mathtools}
\usepackage{mathrsfs}
\usepackage{amssymb}
\usepackage{tabularx}
\usepackage{algorithm}
\usepackage{array}
\usepackage{graphicx}
\usepackage{multirow}
\usepackage{amsthm}
\usepackage{algorithmic}
\usepackage{xcolor}
\usepackage{bm}
\usepackage{float}
\usepackage[overload]{empheq}
\usepackage{hyperref}
\usepackage{tabularx}
\usepackage{bbm}

\newcommand{\R}{\mathbb{R}}
\newcommand{\E}{\mathbb{E}}
\newcommand{\C}
{\mathcal{C}}

\newcommand{\X}{\mathcal{X}}
\newcommand{\K}{\mathcal{K}}
\newcommand{\HH}{\mathbb{H}^2}

\newtheorem{theorem}{Theorem}
\newtheorem{lemma}[theorem]{Lemma}
\newtheorem{proposition}[theorem]{Proposition}
\newtheorem{definition}[theorem]{Definition}

\theoremstyle{definition}
\newtheorem{remark}[theorem]{Remark}
\newtheorem{example}[theorem]{Example}
\newtheorem{assumption}{Assumption}

\numberwithin{equation}{section}
\numberwithin{theorem}{section}

\newcounter{assumptiontag}
\renewcommand{\theassumptiontag}{{\rm A\arabic{assumption}.\roman{assumptiontag}}}

\begin{document}

\maketitle

\setlength{\parindent}{0pt}
\begin{spacing}{1.15}

\begin{abstract}
    We investigate a new class of infinite-horizon backward stochastic differential equations for ergodic optimal control where the cost and state dynamics are time and path-dependent. The state process is defined on an unbounded underlying domain and satisfies an extended dissipativity condition. In contrast with the time-homogeneous Markovian setting, the optimal ergodic cost in our framework is characterized by the asymptotic behavior of a deterministic function, rather than by a single real constant. We obtain well-posedness, verification and stability properties, which extend the previous results in the literature on the Markov case.
\end{abstract}

%\tableofcontents

\vspace{5pt}

\textbf{Key words:} Backward SDEs, Ergodic optimal control, Path-dependent differential equations.

\vspace{5pt}
\noindent {\bf MSC Classification}: Primary: 93E20, 60H10.
Secondary: 60H20, 60K05, 34K50
\section{Introduction}

\setlength{\parindent}{20pt}

Ergodic optimal control has received an important attention during the last four decades and has proved relevant in several areas of applied mathematics. A classical problem consists in minimizing over all admissible controls $\alpha \in \mathcal{A}$ the long run average cost functions:
\begin{equation}\label{ergodic_cost_intro}
\begin{array}{ccc}
      \displaystyle \underline{J}(x, \alpha) = \liminf_{T \rightarrow \infty} \frac{1}{T} \mathbb{E}^{\alpha} \int_0^T f(X^{x}_{t}, \alpha_t) \, dt   &  \text{or} & \displaystyle \overline{J}(x, \alpha) = \limsup_{T \rightarrow \infty} \frac{1}{T} \mathbb{E}^{\alpha} \int_0^T f(X^{x}_{t}, \alpha_t) \, dt \, , 
\end{array}
\end{equation}
with the state process $X$ following the dynamics:
\begin{equation*}\label{state_dynamics_intro}
    \begin{array}{ccc}
         dX_t^x = b(X^{x}_{t}) dt + (dW_t^{\alpha} + \alpha_t dt) & \text{and} & X^{x}_0 = x \in \mathbb{R}^d \, ,
    \end{array} 
\end{equation*}
where $\mathbb{P}^{\alpha}$ is a probability measure associated to $\alpha$ and $W^{\alpha}$ is a $\mathbb{P}^{\alpha}$-Brownian motion. This problem has been studied in depth through analytical techniques. In particular, under some conditions on $f$ and $b$, the optimal cost $ \lambda = \inf_{\alpha} \underline{J}(x, \alpha) = \inf_{\alpha} \overline{J}(x, \alpha)$ can be shown to be independent of the initial state $x$ and is the only real number for which the ergodic Hamilton-Jacobi-Bellman (HJB) equation:

\vspace{-5pt}

\begin{equation}\label{ergodic_HJB_intro}
\begin{array}{cc}
    \displaystyle  -\frac{1}{2} \Delta v - b \cdot \nabla v - \inf_{a} \big\{  a \cdot \nabla v + f(x, a) \big\} + \lambda = 0  \, , & x \in \mathbb{R}^d \, ,
\end{array}
\end{equation}
has a solution $(v, \lambda)$ in some appropriate sense. We refer the reader to the early papers of \citeauthor{Arisawa_Lions} \cite{Arisawa_Lions} for the case where the coefficients are periodic in $x$, \citeauthor{Bensoussan_Frehse} \cite{Bensoussan_Frehse} for a drift-controlled problem in $\mathbb{R}^d$, \citeauthor{Lasry_Lions_state_constraints} \cite{Lasry_Lions_state_constraints} for a model with state constraints and \citeauthor{borkar} \cite{borkar} for a variant with an almost sure long time average cost. Note that we can also consider the approach of ergodic control through infinite-horizon discounted cost functions and obtain the same ergodic value $\lambda$, following the well-known Tauberian Theorems. 

In the more recent literature, ergodic problems have been successfully described by backward stochastic differential equations (backward SDEs or BSDEs for short) since the pioneering paper of \citeauthor{furhman_ergodic} \cite{furhman_ergodic}. In their framework, $b$ satisfies a dissipativity condition which forces the ergodicity of $X$ in $\mathbb{R}^d$. This condition is connected to the Lyapounov function approach, see \textit{e.g.} \citeauthor{ergodic_control_diffusion_processes} \cite{ergodic_control_diffusion_processes}. The HJB equation \eqref{ergodic_HJB_intro} can then be replaced by a family of infinite-horizon backward SDEs $(Y^x, Z^x, \lambda)_{x \in \mathbb{R}^d}$ with:

\vspace{-15pt}

\begin{equation} \label{Ergodic_BSDE_intro}
    \begin{array}{cccc}
        \displaystyle Y_t^x = Y_T^x - \lambda (T-t) + \int_t^T H(X_s^x, Z_s^x) ds - \int_t^T Z_s^x \cdot dW_s \, , & 0 \leq t \leq T < \infty & \text{and} & x \in \mathbb{R}^d \, ,
    \end{array}
\end{equation}
where $H({x}, z) \coloneqq \inf_a \big\{ f(x, a) +a \cdot z  \big\}$ is assumed to be Lipschitz continuous in both the $x$ and $z$ variables. This framework is natural for stochastic optimal control problems, as the backward SDE \eqref{Ergodic_BSDE_intro} directly reformulates the dynamic programming principle. Formally, for all $x \in \mathbb{R}^d$, the triplet $(Y^x, Z^x, \lambda)$ is obtained as the limit when $\rho \downarrow 0$ of $(Y^{\rho, x} - Y^{\rho, 0}, Z^{\rho, x}, \rho Y^{\rho, x})$, where $(Y^{\rho, x}, Z^{\rho, x})$ solves an infinite-horizon backward SDEs with discount $\rho > 0$:
\begin{equation*}\label{infinite_horizon_Backwards SDE}
    \begin{array}{cc}
        \displaystyle Y_t^{\rho, x} = Y_T^{\rho, x} + \int_t^T \big(H(X_s^x, Z_s^{\rho, x}) - \rho Y^{\rho, x}_s \big) ds - \int_t^T Z_s^{\rho, x} \cdot dW_s \, , & 0 \leq t \leq T < \infty \, .
    \end{array}
\end{equation*}
Several variants of the ergodic backward SDE \eqref{Ergodic_BSDE_intro} have been established since \cite{furhman_ergodic}. We can mention for example the case with a weaker dissipativity assumption on $b$ by \citeauthor{ergodic_weak_dissipative} \cite{ergodic_weak_dissipative}, the problem with Neumann boundary conditions in a bounded domain of \citeauthor{ergodic_BSDE_neumann} \cite{ergodic_BSDE_neumann} and the papers of \citeauthor{ergodic_performance_process} \cite{ergodic_performance_process} and \citeauthor{ergodic_bsde_superquadratic} \cite{ergodic_bsde_superquadratic} which extend \eqref{Ergodic_BSDE_intro} to more general Hamiltonians $H$. 
Throughout these papers, one central assumption systematically remains: the problem is entirely time-homogeneous Markovian, \textit{i.e.} the coefficients $f$ and $b$ depend only on the current state $X_t^x$. 

Our objective in this paper is to extend this backward SDE approach to a time and path dependent framework, building on the recent progress regarding the ergodic properties of time and path dependent diffusion processes, see \citeauthor{Ergodic_FSDE} \cite{Ergodic_FSDE} for the setting of functional (or delayed) stochastic differential equations, \citeauthor{Evolution_system_measures} \cite{Evolution_system_measures} and \citeauthor{OU_periodic} \cite{OU_periodic} for the notion of \textit{evolution system of measures} as a substitute of the classical invariant measure for Markov time-inhomogeneous processes. We also refer to \citeauthor{Benaim_ergodicity_inhomogeneous} \cite{Benaim_ergodicity_inhomogeneous} and \citeauthor{entrance_measure_inhomogeneous} \cite{entrance_measure_inhomogeneous}, and the corresponding PDE approach through the eigenvalue problem for linear parabolic PDEs, see \textit{e.g.} \hspace{-7pt} \citeauthor{Harnack_inequalities} \cite{Harnack_inequalities} and \citeauthor{principal_eigenvalues_parabolic} \cite{principal_eigenvalues_parabolic}. We shall borrow the framework and the generalized dissipativity condition introduced in \citeauthor{FSDE_infinite_delay} in \cite{FSDE_infinite_delay} and extended by \citeauthor{Infinite_delay_nonhomogeneous} \cite{Infinite_delay_nonhomogeneous} for inhomogeneous systems with infinite delay. 

Optimal control of delayed systems was examined under various angles, see \textit{e.g.}\hspace{-2pt} \citeauthor{HJB_delay} \cite{HJB_delay} and \citeauthor{Feo_control_delay} \cite{Feo_control_delay}. The study of long-term average cost control in a non-Markovian setting was conducted by \citeauthor{LQ_italiennes} \cite{LQ_italiennes} in the linear-quadratic case, taking advantage of the explicit characterization of LQ control problems through the corresponding backward stochastic Riccati equations. For Markov time-periodic control problems under general criteria, ergodic backward SDEs were used in \citeauthor{cohen_periodic_EBSDE} \cite{cohen_periodic_EBSDE}. To the best of our knowledge, the study of ergodic control for general Markov time-inhomogeneous ergodic control was initiated by \citeauthor{zero_sum_non_stationary_ergodic} \cite{zero_sum_non_stationary_ergodic} and \citeauthor{Guo_Huang_Zhang} \cite{Guo_Huang_Zhang} in the context of stochastic games and Markov decision processes. These articles were an inspiration for our work. 

In the present paper, we adopt the framework of backward SDEs to study optimal ergodic control in continuous time, where the coefficients $b$ and $H$ (or $f$) are time and path-dependent. In particular, the initial condition $\xi$ of the path-state process $X^\xi$ is a continuous function, and the cost function $f(t,\xi,a)$ depends on both time and the path. Under mild assumptions on $b$ and $H$, we construct a new ergodic backward SDE, where the linear map $t \mapsto \lambda t$ in \eqref{Ergodic_BSDE_intro} is replaced by a function $t \mapsto \Lambda(t)$ from $\R_+$ to $\mathbb{R}$ independent of the initial data. 

For the corresponding ergodic control problem, we define similar performance maps to \eqref{ergodic_cost_intro} under path-dependent dynamics of the state process $X$ and the cost $f$. We obtain the following two-sided verification result:
\begin{equation}\label{verif_argument_intro}
    \begin{array}{ccc}
         \displaystyle \inf_{\alpha} \underline{J} = \liminf_{T \rightarrow \infty} \frac{\Lambda(T)}{T}  & \text{and} & \displaystyle \inf_{\alpha} \overline{J} = \limsup_{T \rightarrow \infty} \frac{\Lambda(T)}{T} \, .
    \end{array}
\end{equation}
Moreover, the function $\Lambda$ is sufficient to characterize a wide class of ergodic optimal control problems arising from the system $(b,f)$, in which the time averaging is replaced by an averaging with respect to a general non-increasing density $K:\mathbb R_+\to\mathbb R_+$ by:
\begin{equation*}
    J^{K_\rho}(\xi,\alpha)=\mathbb{E}^{\alpha} \int_0^{\infty} K_\rho(t) f(t,(X^{\xi}_s)_{s\leq t}, \alpha_t) \, dt \, ,
~~\mbox{with}~~
K_\rho(t):=\rho K(\rho t) \, ,~t\ge 0 \, .
\end{equation*}
In particular, we prove that $\Lambda$ can be chosen to be differentiable and that we have the following characterization of the ergodic control problem:
\begin{equation*}
\begin{aligned}
\inf_{\alpha}\liminf_{\rho \downarrow 0} J^{K_\rho}(\xi,\alpha)
&=
\liminf_{\rho \downarrow 0}\inf_{\alpha} J^{K_\rho}(\xi,\alpha)
=
\liminf_{\rho \downarrow 0}
\int_0^{\infty} K_\rho(t)\Lambda'(t)\,dt \,, \\
\inf_{\alpha}\limsup_{\rho \downarrow 0} J^{K_\rho}(\xi,\alpha)
&=
\limsup_{\rho \downarrow 0}\inf_{\alpha} J^{K_\rho}(\xi,\alpha)
=
\limsup_{\rho \downarrow 0}
\int_0^{\infty} K_\rho(t)\Lambda'(t)\,dt \,.
\end{aligned}
\end{equation*}
Note that the choice $K(t)=\mathbf{1}_{[0,1)}(t)$ recovers the uniform averaging \eqref{verif_argument_intro}, while $K(t)=e^{-t}$ recovers the exponential averaging, which is also standard in the literature; see \citeauthor{Arisawa_Lions}~\cite{Arisawa_Lions}. 

This relationship \eqref{verif_argument_intro} is consistent with the recent literature on Floquet bundles and generalized principal eigenvalue for second-order parabolic operators. Indeed, if $X$ is homogeneous Markov and is constrained to stay in a bounded domain, the optimal ergodic cost $\lambda$ is related to the principal eigenvalue of the elliptic operator $\mathcal{L} = \frac{1}{2} \Delta + b \cdot \nabla$, see \textit{e.g.} \citeauthor{Lasry_Lions_state_constraints} \cite{Lasry_Lions_state_constraints} and \citeauthor{quasitrationary_ergodic} \cite{quasitrationary_ergodic}. The notion of principal eigenvalue was extended to second-order parabolic operators in bounded domains and reflects the same structure as \eqref{verif_argument_intro}, see \textit{e.g.} \cite{principal_floquet_bundle, Harnack_inequalities} and \citeauthor{principal_eigenvalues_parabolic} \cite{principal_eigenvalues_parabolic} for a recent review. Note that if the system is time-periodic, this notion coincides with the natural principal eigenvalue for elliptic operators. Similarly in our case we obtain that $\liminf \frac{\Lambda(T)}{T} = \limsup \frac{\Lambda(T)}{T}$ when $(b, H)$ is time-periodic, which is consistent with \cite{cohen_periodic_EBSDE}.

The paper is organized as follows. After introducing the general framework in Section~\ref{sec:general_framework} and presenting our main results in Section~\ref{sec:main_results}, Sections~\ref{sec:forward equation} and~\ref{sec:ebsde} establish the well-posedness of our path-dependent ergodic backward SDE $(Y,Z,\Lambda)$. In Section~\ref{sec:stability}, we prove some stability properties with respect to the coefficients $b$, $H$ and$f$. We finally provide further results in special cases in Section \ref{sec:special_cases}, in particular the uniqueness of the process $Y$ in a suitable sense and a canonical representation of $\Lambda$.

\section{General framework} \label{sec:general_framework}

\subsection{Notations}

Given an integer $d \geq 1$ and a real number $\tau \in \R$, we introduce the following notations:
\begin{itemize}
    
    \item Given a real number $\theta > 0$, $\mathcal{C}_{\theta}$ is the space of continuous functions $\xi : (-\infty, 0] \rightarrow \mathbb{R}^d$ such that:
    \begin{equation*}
        \lim_{s\rightarrow-\infty}e^{\theta s}\xi(s) \; \text{exists.}
    \end{equation*}
    Endowed with the norm $ \displaystyle | \xi |_{\theta} = \sup_{s \leq 0} \, e^{\theta s}|\xi(s)|$, the space $(\mathcal{C}_{\theta},|\cdot|_{\theta})$ is a Polish space. Throughout the paper, we write $\xi_s$ for $\xi(s)$ for simplicity.
    
    \item ${\rm{UC}}_{\tau}$ (resp. ${\rm{AC}}_{\tau}$) is the space of uniformly continuous (resp. absolutely continuous) functions $\Lambda:[\tau, \infty)\to\mathbb{R}$. We say that $\Lambda$ is in $C^{\infty}$ if it is infinitely differentiable, the derivatives at $\tau$ being understood as right derivatives.
    
\end{itemize}

\vspace{2pt}

Consider a probability space $(\Omega, \mathcal{F}, \mathbb{P})$ and a filtration $\mathbb{F} = (\mathcal{F}_t)_{t \geq \tau}$. Given $T \geq \tau$,
\begin{itemize}
    \item $\mathbb{H}^0_{\tau}(\R^d)$ is the set of $\mathbb{F}$-progressively measurable processes $\psi : \Omega \times [\tau, \infty) \rightarrow \R^d$. In the following, we use the compact notation $\psi_t = \psi(\omega, t)$ for any $\psi \in \mathbb{H}^0_{\tau}(\R^d)$.
    
    \item $\mathbb{H}^2_{\tau, T}(\R^d)$ is the subset of processes $\psi  \in \mathbb{H}^0_{\tau}(\R^d)$ such that $\displaystyle\mathbb{E}\int_{\tau}^T |\psi_t|^2 \, dt < \infty$.

    \vspace{3pt}

    \item $\mathbb{S}^2_{\tau, T}(\R^d)$ is the subset of continuous processes $\psi \in \mathbb{H}^0_{\tau}(\R^d)$ such that $ \displaystyle \mathbb{E}\sup_{t \in [\tau,T]} |\psi_t|^2 < \infty$.

    \item If a process $(\beta_s)_{s \geq \tau} \in \mathbb{H}^0_{\tau}(\R^d)$ is bounded and $W^{\tau}$ is a $\mathbb{P}-$Brownian motion starting at $\tau$, we denote by $\mathbb{P}^{\tau, T,\beta}$ the unique probability distribution such that $W^{\tau, \beta} \coloneqq W^{\tau} - \int_{\tau}^{\cdot} \beta_s \, ds$ is a $\mathbb{P}^{\tau, T,\beta}-$Brownian motion in $[\tau, T]$. The existence and uniqueness of $\mathbb{P}^{\tau, T,\beta}$ are guaranteed by Girsanov's Theorem. We denote $\mathbb{E}^{\tau, T,\beta}$ the expectation with respect to $\mathbb{P}^{\tau, T,\beta}$. In the rest of the paper, we omit the dependence on $\tau$ or $T$ from the notation whenever there is no ambiguity.

    \vspace{3pt}

    \item Given a constant $B>0$, let $\mathcal P_B:=\{\beta\in\mathbb H^0_\tau(\mathbb R^d): |\beta_t|\le B,\ d\mathbb P\otimes dt\text{-a.e.}\}$.
\end{itemize}

\vspace{2pt}

For any set $I$, we finally denote by $\mathbf{E}_{\tau}(I)$ the set of families of processes $(\varphi, \psi) = (\varphi^{i}, \psi^{i})_{i \in I}$ indexed by $I$ satisfying $(\varphi^{i}, \psi^{i}) \in \mathbb{S}^2_{\tau, T}(\R) \times \mathbb{H}^2_{\tau,T}(\R^d)$ for all $T > \tau$ and $i \in I$. 

\subsection{Kernel averaging}\label{sec:kernel_averaging}

In the following we define the notions of \textit{kernel} and \textit{ergodic value}, which will be used in our main results of Section \ref{sec:main_results}. We set the time origin to $\tau = 0$ and we therefore omit it in the notations.

\begin{definition}\label{def:values} 
{\rm (i)} $\K\!:=\!\big\{K\!: \R_+\!\to\!\R_+~\text{non-increasing}, \text{c\`adl\`ag},\ \int_0^\infty\!K(t)\,dt=1 \big\}$ is the set of \textnormal{kernels}; \\

\vspace{-8pt}

\noindent {\rm (ii)} The uniform Kernel $K^0:=\mathbf{1}_{[0,1)}$, the exponential kernel $K^{\rm exp}:=e^{-\cdot}$ and the rescaled kernel $K_\rho:=\rho K(\rho\cdot)$ are elements of $\K$, for all $K\in\K$, $\rho>0$; \\

\vspace{-8pt}

\noindent {\rm (iii)} For $\Lambda \in {\rm{UC}}$ and $K \in \K$, we define the two \textnormal{ergodic values} of $\Lambda$ by:
    \begin{equation*}
    \begin{array}{ccc}
        \displaystyle \lambda_*(K)\coloneqq \liminf_{\rho\downarrow 0} - \int_0^\infty \Lambda(t) \,dK_{\rho}(t)& \text{and} & \displaystyle \lambda^*(K)\coloneqq \limsup_{\rho\downarrow 0} - \int_0^\infty \Lambda(t) \, dK_{\rho}(t) \,.
    \end{array}
    \end{equation*}
If in addition $\Lambda\in{\rm UC}\cap{\rm AC}$, then $\Lambda$ has linear growth and we have by integration by parts:
\begin{equation*}
\begin{array}{ccc}
    \displaystyle \lambda_*(K)\coloneqq \liminf_{\rho\downarrow 0}\int_0^\infty K_{\rho}(t)\Lambda'(t)\,dt & \text{and} & \displaystyle \lambda^*(K)\coloneqq \limsup_{\rho\downarrow 0}\int_0^\infty K_{\rho}(t) \Lambda'(t)\,dt \,,
\end{array}
\end{equation*}
as $\lim_{T\to\infty}TK(T)=0$ due to the non-increase and the integrability of $K$ at $\infty$.
\end{definition}

The non-reference to the map $\Lambda$ in the notations $\lambda_*$ and $\lambda^*$ is motivated by Theorem \ref{thm:existence_uniqueness} below which shows that these ergodic values do not depend on the $\Lambda\in{\rm UC}$ obtained in that context.

If $\Lambda' \equiv \lambda$ is constant, then the ergodic values of Definition \ref{def:values} are all equal: $\int_0^{\infty} \rho K(\rho t) \lambda \, dt = \int_0^{\infty} K(s) \lambda \, ds = \lambda$. In general, the relationship between the ergodic values follow from the well-known Wiener-type Tauberian theorem. Let $\mathscr{F}(g)$ denote the Fourier transform of an integrable function
$g:\mathbb{R}\to\mathbb{R}$, and define $\K_W$ as the set:

\vspace{-3pt}

\begin{equation*}
    \mathcal{K}_W:=\Big\{K\in\mathcal{K}\cap {\rm AC}:\ \mathscr{F}\bigl[e^{-2u}K'(e^{-u})\bigr](q)\neq 0,\ \text{for all } q\in\mathbb{R}\Big\} \, .
\end{equation*}

\begin{proposition} \label{prop:relation_value}
For all $K\in\K$, we have $\lambda_*(K^0) \le \lambda_*(K) \le \lambda^*(K) \le \lambda^*(K^0)$.
Moreover, 
\begin{center}
    \hspace{-5mm}
    {\rm (1)} $\lambda_*(K^0)=\lambda^*(K^0)$ iff {\rm (2)} $\lambda_*(K)=\lambda^*(K)$ for all $K\in\K$  iff {\rm (3)} $\lambda_*(K)=\lambda^*(K)$ for some $K\in\K_W$.
\end{center}
As a consequence, in either of these cases, we have $\lambda_*(K)=\lambda^*(K)=\lambda^*(K^0)$ for all $K\in\K$.
\end{proposition}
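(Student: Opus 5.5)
The plan is to write every kernel average as a mixture of uniform averages and then apply Wiener's Tauberian theorem in multiplicative form. Set $A(T):=\Lambda(T)/T$ for $T>0$; $A$ is bounded because $\Lambda\in{\rm UC}$ has linear growth.

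\textbf{Step 1: mixture representation.} For $K\in\K$, the measure $\mu:=-dK$ is a positive measure on $(0,\infty)$ (including a possible atom at $0$ coming from $K(0)$). Since $K\ge 0$ is non-increasing with $\int_0^\infty K=1$, its total $s$-moment is $\int s\,\mu(ds)=\int_0^\infty K=1$. Integrating $K_\rho(t)=\int \mathbf 1_{[0,s/\rho)}(t)\,\rho\,\mu(ds)$ against $d\Lambda$ (first for $\Lambda\in{\rm AC}$, then in general via the Stieltjes form of Definition~\ref{def:values}, using $TK(T)\to0$), we get
\[
-\int_0^\infty \Lambda\,dK_\rho=\int s\,A(s/\rho)\,\mu(ds)+\Lambda(0)\,\rho K(0).
\]
The last term is $O(\rho)$, so $\lambda_*(K)$ and $\lambda^*(K)$ are the $\liminf$ and $\limsup$ of a probability mixture, with weights $s\,\mu(ds)$, of the values $A(T)$ at $T=s/\rho\to\infty$.

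\textbf{Step 2: first chain of inequalities.} Fix $\varepsilon>0$ and $T_0$ with $A(T)\ge\lambda_*(K^0)-\varepsilon$ for $T\ge T_0$. Split the mixture into $s\ge\rho T_0$ and $s<\rho T_0$. The second part has mass $\int_{s<\rho T_0} s\,\mu(ds)\to0$ as $\rho\downarrow0$, and $A$ is bounded, so this part is negligible. Hence $\lambda_*(K)\ge\lambda_*(K^0)-\varepsilon$, and the symmetric argument gives $\lambda^*(K)\le\lambda^*(K^0)$. This also gives (1)$\Rightarrow$(2), and (2)$\Rightarrow$(3) is trivial, provided $\K_W\neq\emptyset$. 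For that we check $K^{\rm exp}\in\K_W$: here $e^{-2u}K'(e^{-u})=-e^{-2u}e^{-e^{-u}}$, whose Fourier transform is a Gamma-function value $-\Gamma(2+iq)$ up to a factor, and this never vanishes. The final sentence of the statement then follows from (2) combined with the chain applied at $K^0$.

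\textbf{Step 3: (3)$\Rightarrow$(1), the main obstacle.} Change variables to $T=e^{x}$, $\rho=e^{-y}$, $s=e^{-u}$, and set $g(x):=A(e^x)$, which is bounded. The mixture becomes a convolution $\int g(y-u)\,k(u)\,du$ with $k(u)=-e^{-2u}K'(e^{-u})$, and $k\in L^1$ with $\int k=1$. Wiener's Tauberian theorem, with $\mathscr F k$ nonvanishing, then gives that $g*k\to\lambda$ implies $g*h\to\lambda\int h$ for every $h\in L^1$.

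Under (3) the limit $\lambda:=\lambda_*(K)=\lambda^*(K)$ exists, which gives this for every $h$. I would apply it to $h=k_{K^0}$, where $k_{K^0}=\delta_{u=0}$ because $-dK^0=\delta_1$; this is not in $L^1$, so the conclusion does not follow directly. The step that needs care is therefore passing from convolution with $L^1$ kernels to point values of $g$, which calls for a slow-oscillation (Tauberian) condition on $g$. I would derive it from the uniform continuity of $\Lambda$: for $|x'-x|\le\delta$ we have
\[
|g(x')-g(x)|\le \frac{|\Lambda(e^{x'})-\Lambda(e^x)|}{e^x}+|A|_\infty\,|e^{x-x'}-1|,
\]
and linear growth of $\Lambda$, i.e. $|\Lambda(t)-\Lambda(t')|\le C(1+|t-t'|)$, makes this $\le C'(\delta+e^{-x})$. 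So $g$ is slowly oscillating. Taking $h$ to be a narrow approximate identity $\delta^{-1}\mathbf 1_{[0,\delta]}$ then gives $g(y)\to\lambda$, that is $A(T)\to\lambda$, which is (1).
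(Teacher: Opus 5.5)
Your proposal is correct and is essentially the paper's proof. The mixture representation with weights $s\,\mu(ds)$ and the split at $\rho T_0$ are exactly Lemma~\ref{lemma:upper-sandwich}, and your logarithmic change of variables with Wiener's theorem under a slow-oscillation condition is what the paper invokes through Korevaar's Theorems~II.3.4 and~II.8.1; one slip to fix is the claim that $A(T)=\Lambda(T)/T$ is bounded, since for a general $\Lambda\in{\rm UC}$ it can blow up as $T\downarrow0$, so $g\in L^\infty(\mathbb{R})$ is not automatic. Repair this as the paper does: first replace $\Lambda$ by the mollified version of Lemma~\ref{lem:mollification}, which has the same ergodic values, $\Lambda(0)=0$ and bounded $\Lambda'$, hence $|A|\le|\Lambda'|_\infty$.
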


The proof is postponed in Appendix \ref{app:ergodic_values}.

\begin{example}
(i) The uniform kernel $K^0\not\in\mathrm{AC}$, and therefore $K^0\notin\mathcal K_W$. On the other hand, the exponential kernel $K^{\mathrm{exp}}\in\mathcal K_W$, since $K^{\mathrm{exp}}\in\mathrm{AC}$ and $
\mathscr{F}\bigl[e^{-2u}(K^{\textnormal{exp}})'(e^{-u})\bigr](q)=-\Gamma(2+iq)\neq 0$. \\

\vspace{-8pt}

\noindent (ii) More generally, if $k:\R_+\to\R_+$ is continuous and satisfies
$\int_0^\infty t\,k(t)\,dt=1$, and $K(t):=\int_t^\infty k(s)\,ds$, then $K\in\mathcal{K}\cap\rm{AC}$ and

\vspace{-3pt}

\[
\mathscr{F}\bigl(e^{-2u}K'(e^{-u})\bigr)(q)
=-\mathscr{F}\bigl(e^{-2u}k(e^{-u})\bigr)(q)
=-\int_0^\infty t^{1+i q} \, k(t)\,dt \, .
\]
Hence $K\in\mathcal{K}_W$ whenever the Mellin transform $\int_0^\infty t^{1+i q} \, k(t)\,dt$ does not vanish on $\mathbb{R}$. \\

\vspace{-8pt}

\noindent (iii) With $k_\nu(t)=\frac{1}{\Gamma(\nu+1)}t^{\nu-1}e^{-t}$, $\nu>0$, we compute that $\mathscr{F}\bigl(e^{-2u}k_\nu (e^{-u})\bigr)(q)
=
\frac{\Gamma(\nu+1+i q)}{\Gamma(\nu+1)}\neq 0$, where $\Gamma$ is the Gamma function.
\end{example}

\begin{remark}\label{inequalities_ergodic_values_rmk}
By Proposition~\ref{prop:relation_value}, for $K\in \K_W$, if the four ergodic values $\lambda_*(K^0)$, $\lambda^*(K^0)$, $\lambda_*(K)$ and $\lambda^*(K)$ are not equal, the only possible configurations are: 
\begin{equation*}
\begin{array}{cc}
     \lambda_*(K^0)=\lambda_*(K)<\lambda^*(K)=\lambda^*(K^0) \, , & \lambda_*(K^0)=\lambda_*(K)<\lambda^*(K)<\lambda^*(K^0) \, , \\[13pt]
     \lambda_*(K^0)<\lambda_*(K)<\lambda^*(K)=\lambda^*(K^0)  \, , & \lambda_*(K^0)<\lambda_*(K)<\lambda^*(K)<\lambda^*(K^0) \, .
\end{array}
\end{equation*}
In particular, if $K = K^{\textnormal{exp}}$, all cases may occur. Examples can be found in \citeauthor{BishopFeinbergZhang2014}~\cite{BishopFeinbergZhang2014} in discrete time. The next example illustrates the last case in our continuous time setting.
\end{remark}

\begin{example}\label{ex:lim_lambda/T}
Let $\Lambda(t) \coloneqq \int_0^t\sin(\ln(1+s))ds$. For the exponential kernel $K^{\textnormal{exp}}\in \K_W,$ we show that $\lambda_*(K^0)<\lambda_*(K^{\textnormal{exp}})<\lambda^*(K^{\textnormal{exp}})<\lambda^*(K^0)$. Note that one has:
\[
\int_0^\infty K^0_{\frac1T}(t)\Lambda'(t)dt=\frac1T\int_{0}^{\ln(1+T)} \hspace{-7pt} e^{u}\sin u\,du
=\frac{1+T}{2T} \big[\sin (\ln(1+T))-\cos (\ln(1+T)) \big]+\frac{1}{2T} \, .
\]
We obtain the limits $\lambda_*(K^0)=-\frac{\sqrt{2}}{2}$ and $\lambda^*(K^0)=\frac{\sqrt{2}}{2}$. Moreover, by integration by parts,
\begin{equation*}
   \hspace{-5pt} \int_0^\infty\!\!\! K^{\exp}_\rho(t)\Lambda'(t)\,dt
    =e^{\rho}\,\text{Im}\Big(\rho^{-i}\!\int_{\rho}^{\infty}\!\!\! e^{-u}u^{i}\,du\Big) = |\Gamma(1+i)|\sin(\varphi-\ln\rho)+o(1),
    ~\mbox{with}~
    e^{i\varphi}:=\frac{\Gamma(1+i)}{|\Gamma(1+i)|},
\end{equation*}
where $o(1)\to 0$ as $\rho \downarrow 0$. Then $\lambda^*(K^{\textnormal{exp}})=-\lambda_*(K^{\textnormal{exp}}) = |\Gamma(1+i)|= \sqrt{\frac{\pi}{\sinh(\pi)}}\in(0,\frac{\sqrt{2}}{2})$.
\end{example}

\subsection{State equation with infinite delay}

Let $(\Omega, \mathcal{F}, \mathbb{P})$ be a complete probability space, $\tau \in \R$ and $W^{\tau} = (W_t^{\tau})_{t \geq \tau}$ be a $\mathbb{R}^d$-valued Brownian motion on this space. Denote by $\mathbb{F}^{\tau}  = (\mathcal{F}_t^{\tau})_{t \geq \tau}$ the $\mathbb{P}$-completion of the filtration generated by $W^{\tau}$. Fix $\theta > 0$ and let $b : \mathbb{R} \times \mathcal{C}_{\theta} \rightarrow \mathbb{R}^d$ be a measurable function. For convenience, we did not borrow the standard notations from the literature on functional stochastic differential equations. Our notations are inspired from the paper of \citeauthor{HJB_delay} \cite{HJB_delay}. Given $\tau \in \mathbb{R}$ and $\xi \in \mathcal{C}_{\theta}$, we define the state process $X$ through the following functional stochastic differential equation, with coefficients depending on the path process $(\mathcal{X}_t^{\tau, \xi})_{t \geq \tau}$ defined by $\mathcal{X}_t^{\tau, \xi}(s) = X_{t + s}^{\tau, \xi}$ for $s \leq 0$:
\begin{equation}\label{Functional_SDE}
    \begin{array}{cccc}
        \displaystyle dX_t^{\tau, \xi} = b(t, \mathcal{X}^{\tau, \xi}_t) dt + d W_t^{\tau}  & \text{for } \, t \geq \tau \, ,   & \text{and} & \mathcal{X}^{\tau, \xi}_{\tau} = \xi \,.
    \end{array}
\end{equation}
We shall see in Proposition \ref{prop:FSDE_existence} below that $\X$ is a Markov process on $\C_{\theta}$, and we therefore denote by $P_{s, t}$ the transition semi-group $P_{s, t} [\phi] (\xi) = \mathbb{E}[\phi(\mathcal{X}^{s, \xi}_t)]$ for all bounded measurable functions $\phi : \mathcal{C}_{\theta} \to \mathbb{R}$.

\begin{definition}
    We say that a family of measures $(m_t)_{t \in \R}$ on $\C_{\theta}$ is an \textnormal{evolution system of measures} for the transition semi-group $P_{s,t}$ if $m_s P_{s, t} = m_t$, \textnormal{i.e.} 
    \begin{equation*}
        \begin{array}{cc}
             \displaystyle \int_{\mathcal{C}_{\theta}} P_{s, t} [\phi](\xi) \, m_s(d\xi) = \int_{\mathcal{C}_{\theta}} \phi(\xi) \, m_t(d\xi) \, , & \text{for all } \, s \leq t \in \mathbb{R}~\mbox{and}~\phi~\mbox{bounded continuous}.
        \end{array}
    \end{equation*}
\end{definition}

\begin{assumption} \label{dissipativity_assumption}
\setcounter{assumptiontag}{0}
    \begin{itemize}
        \item[]

        \refstepcounter{assumptiontag}
        \item[] \hspace{-30pt} (\theassumptiontag) \label{Lpz_condition_lip_growth_condition} \hspace{2pt} \textnormal{Lipschitz and linear growth:} {\it There exists $L_b>0$ such that for any $t \in \mathbb{R}$ and $\xi, \xi'\in\mathcal C_{\theta}$,
        \begin{equation*}
            \begin{array}{ccc}
                \big| b(t, \xi) - b(t, \xi') \big| \leq L_b \, | \xi - \xi'|_{\theta} & \text{and} &
                |b(t, \xi)| \leq L_b \big(1 + | \xi |_{\theta} \big) \,.
            \end{array}
        \end{equation*}}

        \refstepcounter{assumptiontag}
        \item[] \hspace{-26pt}(\theassumptiontag) \label{dissipativity_condition} \hspace{1pt} \textnormal{Dissipativity:} {\it There exist $\theta_0 > \theta$ and $\eta_1, \eta_2 > 0$ such that $\eta_1 >2\theta - \frac{\eta_2}{2(\theta_0-\theta)}$ and
        \begin{equation*}
        \begin{array}{cc}
             \displaystyle 2\bigl(\xi_0 - \xi'_0\bigr)\cdot\bigl(b(t, \xi)-b(t, \xi')\bigr) \le -\eta_1 \,  \bigl|\xi_0 - \xi'_0\bigr|^2 +\eta_2 \int_{-\infty}^{0} e^{2\theta_0 s}\bigl| \xi_{s} - \xi'_{s}\bigr|^2\,ds,
             ~t \in \mathbb{R},~\xi, \xi'\in\mathcal C_{\theta} \, .
        \end{array}
        \end{equation*}}
    \end{itemize}
\end{assumption} 

\begin{remark}\label{rmk:finite_delay_ass}
A particular case of the above framework is when the drift $b$ only depends on the recent history on a finite interval $[-r,0]$, for some $r>0$. More precisely, there exists a measurable map $\bar b:\R\times C([-r,0];\R^d)\to\R^d$ such that $b(t,\xi)=\bar b\bigl(t,\xi|_{[-r,0]}\bigr)$ for all $t \ge 0$. This case was the most widely studied in the literature, see \citeauthor{Mohammed_FSDE} \cite{Mohammed_FSDE} and \citeauthor{Ergodic_FSDE} \cite{Ergodic_FSDE}.
\end{remark}

\begin{proposition}\label{prop:FSDE_existence}
    Under Assumption~\ref{dissipativity_assumption}, for every $\tau \in \mathbb{R}$ and $\xi\in\mathcal{C}_\theta$, the SDE~\eqref{Functional_SDE} admits a unique strong solution $(X_t^{\tau, \xi})_{t\ge\tau}$. Moreover, the corresponding path process $(\mathcal{X}_t^{\tau, \xi})_{t\ge\tau}$ is a $\mathcal{C}_\theta$-valued time-inhomogeneous Markov process satisfying, for some constant $C>0$,
\begin{equation}\label{eq:estimate_X_Lip}
     \displaystyle |\mathcal{X}_t^{\tau, \xi} - \mathcal{X}_t^{\tau, \xi'}|_\theta \le Ce^{-\theta t}|\xi-\xi'|_\theta \, , \; d\mathbb{P}\otimes dt-\text{a.e. and} ~
     \displaystyle \sup_{\beta\in\mathcal{P}_B}\sup_{t\ge \tau} \, \E^\beta\big[|\mathcal{X}_t^{\tau, \xi}|_\theta\big]<\infty,~B>0.
\end{equation}
Moreover, there exists a evolution system of measures $(m_t)_{t \in \mathbb{R}}$ for the semi-group $P_{s, t}$.
\end{proposition}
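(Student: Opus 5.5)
Existence and uniqueness of a strong solution will come from the classical fixed-point argument for functional SDEs with Lipschitz coefficients on $\mathcal C_\theta$, as in \cite{FSDE_infinite_delay, Infinite_delay_nonhomogeneous}. For $t\ge\tau$ we have $|\mathcal X_t|_\theta\le \sup_{s\le \tau-t}e^{\theta s}|\xi_{s+t-\tau}|\vee\sup_{u\in[\tau,t]}|X_u| \le e^{-\theta(t-\tau)}|\xi|_\theta+\sup_{[\tau,t]}|X_u|$. Hence (A1.i) turns the equation on each $[\tau,T]$ into a Lipschitz functional SDE in $\mathbb S^2_{\tau,T}$, and a Picard iteration with BDG closes the argument. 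Pathwise uniqueness gives the flow property $\mathcal X^{\tau,\xi}_t=\mathcal X^{s,\mathcal X^{\tau,\xi}_s}_t$. Together with the independence of Brownian increments after $s$, this yields the time-inhomogeneous Markov property of $\mathcal X$ in $\mathcal C_\theta$. The $\mathcal C_\theta$-continuity of paths follows from the existence of $\lim_{s\to-\infty}e^{\theta s}\xi_s$.

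For the contraction estimate, set $\Delta=X^{\tau,\xi}-X^{\tau,\xi'}$. The noise cancels, so $\Delta$ is $C^1$ in time, and (A1.ii) gives
\[
\frac{d}{dt}|\Delta_t|^2\le-\eta_1|\Delta_t|^2+\eta_2\int_{-\infty}^0e^{2\theta_0 s}|\Delta_{t+s}|^2ds .
\]
Multiply by $e^{2\theta t}$, integrate, and split the delay integral into the part before $\tau$, bounded by $|\xi-\xi'|_\theta^2$ using $\theta_0>\theta$, and the part after $\tau$. A Fubini exchange bounds the latter by $\frac{\eta_2}{2(\theta_0-\theta)}\int_\tau^t e^{2\theta u}|\Delta_u|^2du$. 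The condition $\eta_1>2\theta-\frac{\eta_2}{2(\theta_0-\theta)}$ is what makes the net coefficient of $\int e^{2\theta u}|\Delta_u|^2$ nonpositive. I expect this sign bookkeeping to be the main obstacle: one has to get exactly the stated constant, possibly by comparing with a slightly smaller rate. The result is $\sup_{u\le t}e^{2\theta u}|\Delta_u|^2\le C|\xi-\xi'|_\theta^2$ (with the paper's normalization, $\tau$ absorbed into $C$). Taking the $|\cdot|_\theta$ norm of the segment then gives the first bound in \eqref{eq:estimate_X_Lip}.

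For the moment bound under $\mathbb P^\beta$, write $dX=(b(t,\mathcal X_t)+\beta_t)dt+dW^\beta$. Apply Itô to $e^{2\theta t}|X_t|^2$ and use (A1.ii) with $\xi'=0$. Together with $|b(t,0)|\le L_b$ and Young's inequality on $2X\cdot(b(t,0)+\beta)$, this gives the same Gronwall-type structure as above, plus a forcing term of order $(L_b+B)^2+d$ that is uniform in $\beta\in\mathcal P_B$. This yields $\sup_t\mathbb E^\beta|X_t|^2\le C(1+|\xi|_\theta^2)$. The supremum inside $|\mathcal X_t|_\theta$ is then handled by splitting the past into unit intervals, weighting them by $e^{-\theta k}$, and controlling each interval by BDG plus the linear growth of $b$. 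This gives the bound on $\mathbb E^\beta|\mathcal X_t|_\theta$.

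Finally, to construct the evolution system of measures, I use pullback: fix $t$ and consider the laws $\mu_{s,t}$ of $\mathcal X^{s,0}_t$ as $s\to-\infty$. Couple different starting times synchronously with the same Brownian motion on $\mathbb R$. For $s'<s$, the flow property gives $\mathcal X^{s',0}_t=\mathcal X^{s,\mathcal X^{s',0}_s}_t$, so the contraction estimate yields $\mathbb E|\mathcal X^{s',0}_t-\mathcal X^{s,0}_t|_\theta\le Ce^{-\theta(t-s)}\sup_{r}\mathbb E|\mathcal X^{s',0}_s|_\theta\to0$, using the uniform moment bound with $\beta=0$. The sequence is therefore Cauchy in $L^1$ and converges to some $\mathcal Y_t$. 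Set $m_t:=\mathrm{Law}(\mathcal Y_t)$. Passing to the limit in $\mathbb E\phi(\mathcal X^{s,0}_t)=\mathbb E[P_{r,t}\phi(\mathcal X^{s,0}_r)]$ for $r<t$ gives $m_rP_{r,t}=m_t$; first do this for bounded Lipschitz $\phi$, using the Feller property that comes from the contraction estimate, and then extend to bounded continuous $\phi$ by density.
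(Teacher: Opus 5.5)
Your existence and Markov sketch matches the paper, which simply cites \cite{FSDE_infinite_delay,Infinite_delay_nonhomogeneous}. Your contraction argument is exactly Lemma~\ref{lem:exp_decreasing} combined with Proposition~\ref{prop:forward_lip}.

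\textbf{The sign of the dissipativity condition.} The printed condition $\eta_1>2\theta-\frac{\eta_2}{2(\theta_0-\theta)}$ does not make the net coefficient $2\theta-\eta_1+\frac{\eta_2}{2(\theta_0-\theta)}$ nonpositive. No comparison with a smaller rate can fix this, because the contraction is false under that condition. Take $d=1$, $\theta_0=1$, $\theta=\frac12$ and $b(t,\xi)=-\xi_0+6\int_{-\infty}^0e^{2s}\xi_s\,ds$. Young and Cauchy--Schwarz give Assumption~\eqref{dissipativity_condition} with $\eta_1=1$, $\eta_2=18$, so the printed inequality holds. Yet $\xi_s=e^{s}$ and $\xi'=\mathbf{0}$ give $X^{\xi}_t-X^{\xi'}_t=e^{t}$. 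The hypothesis the paper actually uses is $\eta_1>2\theta+\frac{\eta_2}{2(\theta_0-\theta)}$: see Lemma~\ref{lem:exp_decreasing} with $\gamma_i=\eta_i$, the remark after this proposition, and $\eta_0>0$ in Proposition~\ref{prop:stability_b}. With the plus sign your bookkeeping closes exactly, and the strict inequality gives room to absorb Young's $\varepsilon$ in the moment bound.

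\textbf{The genuine gap: the supremum step of the moment bound.} Controlling the drift on each unit interval by linear growth reintroduces the quantity you are estimating, since $|b(r,\mathcal{X}_r)|\le L_b(1+|\mathcal{X}_r|_\theta)$. With $M:=\sup_r\mathbb{E}^\beta|\mathcal{X}_r|_\theta$, the weighted sum over intervals only yields $M\le C(1+|\xi|_\theta)+\frac{L_b}{1-e^{-\theta}}M$. This closes only if $L_b<1-e^{-\theta}$, and finer partitions improve it at best to $L_b<\theta$; no such smallness is assumed. You must handle the drift by dissipativity instead. That only involves $|X_r|^2$ and $\int_{-\infty}^0e^{2\theta_0 s}|X_{r+s}|^2ds$, both controlled by your pointwise $L^2$ bound.

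The paper does this in Proposition~\ref{prop:forward_holder}:
\begin{itemize}
\item apply It\^o to $|X_r-\xi_0|^2$;
\item compare $b(r,\mathcal{X}_r)$ with $b(r,\xi)$ through Assumption~\eqref{dissipativity_condition};
\item use Lemma~\ref{lem:exp_decreasing} to get the pathwise bound $e^{2\theta r}|X_r-\xi_0|^2\le C(1+|\xi|_\theta^2)\frac{e^{2\theta r}-1}{2\theta}+2\int_0^re^{2\theta u}(X_u-\xi_0)\cdot dW^\beta_u$;
\item take $\sup_{r\le t}$, multiply by $e^{-2\theta t}$ and apply BDG to the martingale.
\end{itemize}
This gives $\mathbb{E}^\beta|\mathcal{X}_t-\xi|_\theta^2\le C(1+|\xi|^2_\theta)$ uniformly in $t$ and $\beta\in\mathcal{P}_B$. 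Within your own scheme, the equivalent fix is to expand $|X_u|^2$ by It\^o on each interval. Then bound $2X_u\cdot b(u,\mathcal{X}_u)$ via Assumption~\eqref{dissipativity_condition} with $\xi'=\mathbf{0}$ before applying BDG.

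\textbf{The evolution system of measures.} Your construction of $(m_t)$ is a genuinely different route from the paper, which only cites \cite{Infinite_delay_nonhomogeneous}. Your pullback with synchronous coupling is self-contained and correct once the moment bound is repaired. You need the contraction in the form $Ce^{-\theta(t-s)}$ of Proposition~\ref{prop:forward_lip}. You should also note that the moment bound is uniform in the initial time $s'$: for $\xi=\mathbf{0}$ its constant depends only on structural parameters. Working in $L^1(\Omega;\mathcal{C}_\theta)$ rather than total variation is also the right choice, since total-variation mixing fails under infinite delay (Remark~\ref{rmk:counterex_feller}).
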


\begin{proof}
    The existence, uniqueness, continuity and Markov property for the delayed SDE \eqref{Functional_SDE} was first studied by \citeauthor{FSDE_infinite_delay} \cite{FSDE_infinite_delay} in the time-homogeneous case, and then by \citeauthor{Infinite_delay_nonhomogeneous} \cite{Infinite_delay_nonhomogeneous} for the time-inhomegenous case. The first estimate of \eqref{eq:estimate_X_Lip} follows from Proposition~\ref{prop:forward_lip} below, while the second is implied by Proposition~\ref{prop:forward_holder} below. The existence of the family $(m_t)_{t \in \mathbb{R}}$ is showed in \cite{Infinite_delay_nonhomogeneous}.
\end{proof}

\begin{remark} The \emph{dissipativity} condition of
Assumption~\eqref{dissipativity_condition} is satisfied for example by an Ornstein-Uhlenbeck process, corresponding to $b(t, \xi) = - \eta_0 \, \xi_0$ for some $\eta_0 > 0$. For time-homogeneous controlled Markov processes, the dissipativity condition was used in \citeauthor{Goldys_Maslowski} \cite{Goldys_Maslowski} and later in \citeauthor{furhman_ergodic}~\cite{furhman_ergodic}. For equations with finite delay, an equivalent condition can be found in \citeauthor{Ergodic_FSDE} \cite{Ergodic_FSDE}. In the general case, the essential role of the constraint $\eta_1>2\theta+\frac{\eta_2}{2(\theta_0-\theta)}$ will become apparent in the proof of Lemma~\ref{lem:exp_decreasing} below. Note that under this condition the function $V(\xi) = | \xi|_{\theta}^2$ is a Lyapunov function for $X$, see \citeauthor{Ergodicity_neutral_SDE} \cite[Proposition 1.2]{Ergodicity_neutral_SDE}. We finally point out that all of the following results hold if $\eta_1, \eta_2 : \mathbb{R} \to \mathbb{R}$ are two nonnegative bounded functions satisfying $\eta_1(t) -2\theta+\frac{\eta_2(t)}{2(\theta_0-\theta)} \geq \eta_0$ for some constant $\eta_0 > 0$.
\end{remark}

\begin{example}\label{ex:OU_evolution_system}
    Consider a time-inhomogeneous Ornstein-Uhlenbeck process with dynamics with initial condition $(\tau, x) \in \mathbb{R} \times \mathbb{R}$:
    \vspace{-5pt}
    \begin{equation} \label{eq:OU_non_homog}
        \begin{array}{cc}
             d X^{\tau, x}_t = - \eta(t) X_t^{\tau, x} dt  + d W_t^{\tau} \, , & X^{\tau, x}_{\tau} = x \, ,
        \end{array}
    \end{equation}
    where $\eta(t) \geq \eta_0 > 0$ is a measurable function. In \cite{OU_periodic}, \citeauthor{OU_periodic} exhibited an evolution system of measures for $X$. First note that:
    \begin{equation*}
        \begin{array}{cc}
            \displaystyle \delta_x P_{\tau, t} = \mathcal{N}(x \, e^{-S_{\tau, t}}, \Sigma_{\tau, t}) \, , & \text{where } \, \displaystyle S_{\tau, t} \coloneqq \int_{\tau}^t \eta(r) \, dr  \, \text{ and } \, \Sigma_{\tau, t} \coloneqq \int_{\tau}^t e^{-2 S_{r, t}} \, dr \, .
        \end{array}
    \end{equation*}
    Then letting $m_t \coloneqq \mathcal{N}(0, v(t))$, where $v\geq0$ satisfies $v' = - 2 \eta v + 1$ on $\mathbb{R}$,
    we obtain that $m_s P_{s, t} = m_t$ for all $s \leq t \in \mathbb{R}$. Note that the solutions of this ordinary differential equation are not unique, so there is no uniqueness for the evolution systems of measures in general. In \cite{OU_periodic}, uniqueness is obtained under the assumption that $\eta$ is time-periodic. Note that $v(t) = \Sigma_{-\infty, t}$ is the unique solution which is bounded on $\R$. For a more general model, we can obtain uniqueness of the evolution system of measures under the condition that $\sup_{t \in \R} \int_{\R^d} |x|^2 m_t(dx) < \infty$, see Proposition \ref{prop:ergodicity} and Appendix \ref{app:mixing_prop_sec}.
\end{example}

\section{Main results} \label{sec:main_results}

In this section we present our main results and postpone the proofs to the remaining of the paper.
Fix $\theta > 0$, let 
$b : \mathbb{R} \times \mathcal{C}_{\theta} \rightarrow \mathbb{R}^d$ satisfy Assumption~\ref{dissipativity_assumption} and $H : \mathbb{R} \times \mathcal{C}_{\theta} \times \mathbb{R}^d \rightarrow \mathbb{R}$ be a measurable function. For simplicity, we set the time origin to $\tau = 0$ throughout this section and we therefore omit it in the notations. Denote by $X^\xi$ the solution of~\eqref{Functional_SDE} and $\mathcal{X}_t^{\xi}(s) = X_{t + s}^{\xi}$ for $s \leq 0$.

\subsection{Path-dependent ergodic backward SDE}

We study an infinite-horizon backward SDE with unknowns $(Y^{\xi}, Z^{\xi},\Lambda)$ of the form:
\begin{equation} \label{EBackwards SDE}
    \begin{array}{cc}
        \displaystyle Y_t^{\xi} - \Lambda(t) = Y_T^{\xi} - \Lambda(T) + \int_t^T H(s, \mathcal{X}^{\xi}_s, Z_s^{\xi}) \, ds - \int_t^T Z_s^{\xi}\cdot  dW_s \, , & \text{for all } \,  0 \leq t \leq T  \, .
    \end{array}
\end{equation}

\begin{definition}\label{def_ergodic_BSDE}
    We say that $(Y, Z, \Lambda) \in \mathbf{E}(\C_{\theta}) \times {\rm{UC}}$ is a solution of the \textnormal{ergodic backward SDE} if:
    \begin{itemize}
        \item[{\rm (i)}] For all $\xi \in \mathcal{C}_{\theta}$, the triple $(Y^{\xi}, Z^{\xi}, \Lambda)$ solves the infinite-horizon backward SDE \eqref{EBackwards SDE}.

        \item[{\rm (ii)}] There exists a constant $C > 0$ such that:
        \vspace{-6pt}
    \begin{equation}\label{Y_decompsition}
    \begin{array}{cc}
         | Y^\xi_t | \leq C ( 1 + | \mathcal{X}^{\xi}_t |_{\theta}) \,  & \text{for all } \,  t \geq 0 \, .
    \end{array}
    \end{equation}
    \end{itemize}
\end{definition}

This is our extension of the ergodic backward SDE \eqref{Ergodic_BSDE_intro} of \citeauthor{furhman_ergodic} \cite{furhman_ergodic} in the homogeneous case (\textit{i.e.} $b$ and $H$ do not depend on the time). Indeed, if $(Y, Z, \lambda) \in \mathbf{E}(\R^d) \times \mathbb{R}$ satisfies $|Y_t^{x}| \leq C(1 + |X_t^{x}|)$ and solves \eqref{Ergodic_BSDE_intro} for all $x \in \mathbb{R}^d$, then $(Y,Z,\Lambda)$ is a solution of the ergodic backward SDE, where $\Lambda(t) \coloneqq \lambda t$. 

\begin{remark}\label{rmk:smooth_repre}
In fact, if $(Y,Z,\Lambda)\in \mathbf{E}(\C_\theta) \times {\rm{UC}}$ is a solution of the ergodic backward SDE and $\overline{\Lambda}\in {\rm{UC}}$, such that $|\overline{\Lambda}-\Lambda|_\infty < \infty,$ $(\overline{Y},Z,\overline{\Lambda})$ is also a solution of the ergodic backward SDE, where $\overline{Y} = Y+(\overline{\Lambda}-\Lambda)$. Theorem \ref{thm:existence_uniqueness} below shows that this characterize all possible $\Lambda$. Moreover, by Lemma~\ref{lem:mollification}, $\overline{\Lambda}$ can be chosen $C^\infty$ with bounded derivatives. In that case, $\overline{Y}^{\xi}$ is a semimartingale, and we may write
\eqref{EBackwards SDE} in differential form, abusing notation by replacing
$(\overline{Y},Z,\overline{\Lambda})$ with $(Y,Z,\Lambda)$:
\begin{equation*}
\begin{array}{cc}
     \displaystyle d Y_t^{\xi} = d \Lambda(t) - H(t, \mathcal{X}^{\xi}_t, Z_t^{\xi}) \, dt + Z_t^{\xi}  \cdot dW_t \, , & t \ge 0 \ .
\end{array}
\end{equation*}
\end{remark}

Our main well-posedness result requires the following additional conditions.

\begin{assumption} \label{hamiltonian_assumption}{\it
\setcounter{assumptiontag}{0}
$H$ is measurable and there exists a constant $L_H>0$ such that
$$
    |H(t,\xi,0)|\le L_H,
    ~\mbox{and}~~
    |H(t,\xi,z)-H(t,\xi',z')|
            \le L_H\big(|\xi-\xi'|_{\theta}+|z-z'|\big),
~t\in \R,~\xi,\xi'\in\mathcal{C}_\theta, z,z'\in\R^d.
$$}
\end{assumption}

\begin{theorem}[Well-posedness]\label{thm:existence_uniqueness}
Under Assumptions~\ref{dissipativity_assumption} and \ref{hamiltonian_assumption}, there exists a solution $(Y, Z,\Lambda) \in \mathbf{E}(\C_{\theta}) \times \rm{UC}$ of the ergodic backward SDE~\eqref{EBackwards SDE}, and a continuous function $v :\R_+ \times \mathcal{C}_{\theta} \to \mathbb{R}$ such that $Y^{\xi}_t = v(t, \mathcal{X}^{\xi}_t)$ for all $(t,\xi)\in \R_+\times \mathcal{C}_{\theta}$. Finally, if $(\overline Y, \overline Z,\overline\Lambda)$ is another solution, then:

\vspace{-5pt}

\begin{equation}\label{eq:Lambda_unique_up_to_constant}
\sup_{t\ge 0} |\Lambda(t)-\overline\Lambda(t) | < \infty \, .
\end{equation}
In particular, $\Lambda$ can be chosen in $C^\infty$, with all derivatives bounded and $\Lambda(0)=0$.
\end{theorem}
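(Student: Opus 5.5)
We follow the vanishing-discount strategy of Fuhrman–Hu–Tessitore, adapted so that it tracks a time-dependent normalization instead of a constant. For $\rho>0$ and every $(\tau,\xi)$, we first solve the discounted infinite-horizon BSDE
\[
Y^{\rho,\tau,\xi}_t = Y^{\rho,\tau,\xi}_T + \int_t^T \big(H(s,\mathcal{X}^{\tau,\xi}_s,Z^{\rho,\tau,\xi}_s) - \rho Y^{\rho,\tau,\xi}_s\big)ds - \int_t^T Z^{\rho,\tau,\xi}_s\cdot dW_s ,
\]
with $Y^\rho$ bounded. This is classical since $|H(\cdot,\cdot,0)|\le L_H$, and it gives $|\rho Y^\rho|\le L_H$. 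Markov property and pathwise uniqueness then give $Y^{\rho,\tau,\xi}_t=v^\rho(t,\mathcal{X}^{\tau,\xi}_t)$ with $v^\rho(t,\eta):=Y^{\rho,t,\eta}_t$.

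The key estimate is the uniform Lipschitz bound $|v^\rho(t,\xi)-v^\rho(t,\xi')|\le C|\xi-\xi'|_\theta$, with $C$ independent of $\rho,t$. To prove it, we linearize the difference of the two BSDEs in $z$ via a bounded process $\beta$ with $|\beta|\le L_H$. Girsanov then turns the difference into
\[
\mathbb{E}^{\beta}\int_t^\infty e^{-\rho(s-t)}\big(H(s,\mathcal{X}^{t,\xi}_s,Z^{\xi}_s)-H(s,\mathcal{X}^{t,\xi'}_s,Z^{\xi}_s)\big)ds .
\]
We bound this by $L_H\int_t^\infty \mathbb{E}^\beta|\mathcal{X}^{t,\xi}_s-\mathcal{X}^{t,\xi'}_s|_\theta\,ds$, and exponential contraction makes the integral finite. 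This is the main obstacle. Estimate \eqref{eq:estimate_X_Lip} is stated pathwise and under $\mathbb{P}$, but the difference of two solutions driven by the same noise satisfies a noise-free equation. Hence the contraction (from Proposition~\ref{prop:forward_lip}, through the dissipativity with $\eta_1>2\theta-\eta_2/(2(\theta_0-\theta))$) holds $\omega$-by-$\omega$ and survives the change of measure. Note that the factor should be read as $e^{-c(s-t)}$ relative to the start time. Combining the Lipschitz bound with the second estimate of \eqref{eq:estimate_X_Lip}, uniform over $\beta\in\mathcal P_{L_H}$, yields
\[
|v^\rho(t,\xi)-v^\rho(t,0)|\le C|\xi|_\theta .
\]
We also need uniform continuity in time of $t\mapsto v^\rho(t,0)-v^\rho(0,0)$, i.e. $|v^\rho(t+h,0)-v^\rho(t,0)|\le C(h)$ with $C(h)$ independent of $\rho$ and $t$. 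We obtain it by running the BSDE from $t$ to $t+h$ started at $0$: the terminal value is $v^\rho(t+h,\mathcal{X}^{t,0}_{t+h})$. Comparing it with $v^\rho(t+h,0)$ costs $C\,\mathbb{E}^\beta|\mathcal{X}^{t,0}_{t+h}|_\theta$, which is small in $h$ (Proposition~\ref{prop:forward_holder}). The drift terms contribute $O(h)$ because $|H(\cdot,\cdot,0)|+\rho|Y^\rho|\le 2L_H$, using the linearization again.

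Next we set $\Lambda^\rho(t):=v^\rho(0,0)-v^\rho(t,0)$, up to sign convention, and $\bar v^\rho(t,\xi):=v^\rho(t,\xi)-v^\rho(t,0)$. The family $\bar v^\rho(t,\cdot)$ is uniformly Lipschitz with $\bar v^\rho(t,0)=0$, and the $\Lambda^\rho$ are equi-uniformly-continuous with $\Lambda^\rho(0)=0$. Because $\mathcal{C}_\theta$ is separable, a diagonal Arzel\`a–Ascoli argument on a countable dense set of $(t,\xi)$ gives $\rho_n\downarrow0$ along which $\bar v^{\rho_n}\to v$ and $\Lambda^{\rho_n}\to\Lambda$ locally uniformly. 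The function $v$ is Lipschitz in $\xi$ with linear growth, continuity in $t$ comes from the same time estimate, and $\Lambda\in{\rm UC}$. The accumulated discount term also converges: $\int_t^T\rho Y^\rho_s\,ds$ equals $\int_t^T \rho v^\rho(s,0)\,ds$ plus an $O(\rho)$ remainder. We absorb it into $\Lambda$ by defining $\Lambda$ through the identity $\Lambda^\rho(T)-\Lambda^\rho(t)-\int_t^T\rho v^\rho(s,0)ds$. Its boundedness comes from $|\rho v^\rho|\le L_H$ together with the time-regularity estimate. We then put $Y^\xi_t:=v(t,\mathcal{X}^\xi_t)$. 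Passing to the limit in the BSDE on $[0,T]$ uses the standard a priori estimates for Lipschitz BSDEs, which give $Z^{\rho_n}\to Z$ in $\mathbb{H}^2_{0,T}$, and this yields \eqref{EBackwards SDE} and \eqref{Y_decompsition}.

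For \eqref{eq:Lambda_unique_up_to_constant}, take two solutions and linearize in $z$ with a bounded $\beta$. Under $\mathbb{P}^\beta$,
\[
\Lambda(T)-\bar\Lambda(T)-(\Lambda(0)-\bar\Lambda(0)) = \mathbb{E}^\beta\big[(Y_T-\bar Y_T)-(Y_0-\bar Y_0)\big].
\]
By \eqref{Y_decompsition} and the uniform moment bound in \eqref{eq:estimate_X_Lip}, the right-hand side is bounded uniformly in $T$. Finally, Lemma~\ref{lem:mollification} replaces $\Lambda$ by a $C^\infty$ function with bounded derivatives at bounded distance, and Remark~\ref{rmk:smooth_repre} shows the modified triple is still a solution; subtracting a constant gives $\Lambda(0)=0$.
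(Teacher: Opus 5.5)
\paragraph{Verdict.} Your proof is correct, but it takes a different route from the paper's.

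\paragraph{Comparison of the two routes.} You pass to the limit $\rho\downarrow 0$ in the discounted infinite-horizon BSDE, which is the original Fuhrman--Hu--Tessitore scheme. The paper instead passes to the limit $T\to\infty$ in the finite-horizon BSDE \eqref{BSDE_finite_horizon} with zero terminal value. It proves the same two uniform estimates for $v^T$: Lipschitz in $\xi$, and a time modulus $C[(1+|\xi|_\theta)\sqrt{h}+h+\omega_{\theta,\xi}(h)]$. It then extracts $T_n$ with $v^{T_n}(t,\xi)-v^{T_n}(0,\mathbf{0})\to\overline v(t,\xi)$ and sets $\Lambda(t)=-\overline v(t,\mathbf{0})$. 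Since the paper normalizes by the single constant $v^{T}(0,\mathbf{0})$, nothing else has to be absorbed into $\Lambda$. In your route the normalization $v^\rho(t,0)$ is time-dependent, so $\Lambda$ is the limit of two pieces: the oscillation $v^\rho(0,0)-v^\rho(t,0)$ and the accumulated level $\int_0^t\rho v^\rho(s,0)\,ds$.

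\paragraph{What each route buys.} The paper's route needs only finite-horizon BSDE theory, and its fields $v^T$, $Z^{\tau,T,\xi}$ are reused later:
\begin{itemize}
\item the shift identity $v^{T-kT_0}(s,\cdot)=v^{T}(s+kT_0,\cdot)$, combined with Hyers' lemma, in the periodic case;
\item the construction of $\zeta$ in the Markov case;
\item the bounded-$Z$ lemma, which relies on finite-horizon differentiability of $v^T$.
\end{itemize}
Your route needs existence of the discounted infinite-horizon BSDE and the a priori bound $|\rho Y^\rho|\le L_H$, which you use in the time estimate. In exchange, it ties $\Lambda$ directly to the exponentially discounted criterion $J^{K^{\rm exp}_\rho}$. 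It also simplifies the periodic case: there $v^\rho(\cdot,\xi)$ is itself $T_0$-periodic, so $\sup_t|\Lambda(t)-\lambda t|<\infty$ follows at once, with $\lambda$ a limit of the period averages of $\rho v^\rho(\cdot,0)$, and no Hyers--Fekete argument is needed.

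\paragraph{Bookkeeping fixes.}
\begin{itemize}
\item With $\Lambda^\rho(t)=v^\rho(0,0)-v^\rho(t,0)$, the correct increment is $\Lambda^\rho(T)-\Lambda^\rho(t)+\int_t^T\rho v^\rho(s,0)\,ds$, with a plus sign. Test it on $H\equiv c$: there $v^\rho\equiv c/\rho$, and one needs $\Lambda(t)=ct$.
\item The second piece requires a further Arzel\`a--Ascoli extraction. What makes this work is that the primitives $t\mapsto\int_0^t\rho v^\rho(s,0)\,ds$ are uniformly $L_H$-Lipschitz, not merely that they are bounded.
\item The time estimate must be run at every $\xi$, not only at $0$. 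It then picks up the extra term $(1+|\xi|_\theta)\sqrt h+\omega_{\theta,\xi}(h)$. You need this both to extend the limit off the countable dense set and to get continuity of $v$ in $t$.
\end{itemize}
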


\vspace{2pt}

The proof of Theorem \ref{thm:existence_uniqueness} is given in Section~\ref{sec:ebsde}. In the rest of this paper, we write $\lambda_*(K)$ and $\lambda^*(K)$ as in Definition~\ref{def:values}, where $\Lambda$ is part of a solution to the ergodic backward SDE~\eqref{EBackwards SDE}. Moreover, by \eqref{eq:Lambda_unique_up_to_constant}, these quantities are independent of the choice of $\Lambda$.

\begin{remark}\label{rmk:uniqueness}
Uniqueness of $\Lambda$ only holds in the sense of~\eqref{eq:Lambda_unique_up_to_constant}, in line with Remark~\ref{rmk:smooth_repre}. In the homogeneous Markov case, \textit{i.e.} where there is a canonical representation $\Lambda(t) = \lambda t$, we recover the uniqueness of the ergodic constant $\lambda$, see \citeauthor{Arisawa_Lions} \cite{Arisawa_Lions} and \citeauthor{furhman_ergodic} \cite{furhman_ergodic}.
\end{remark}

We finally present a stability result for the ergodic values of Definition \ref{def:values} with respect to the coefficients $b$ and $H$. Let $(b, H)$ and $(\tilde{b}, \tilde{H})$ be two sets of coefficients satisfying Assumptions \ref{dissipativity_assumption} and \ref{hamiltonian_assumption}. We denote by $(\lambda_*(K), \lambda^*(K))$ and $(\tilde{\lambda}_*(K), \tilde{\lambda}^*(K))$, the ergodic values corresponding to the systems $(b, H)$ and $(\tilde{b},\tilde{H})$ respectively. We prove stability results for the ergodic values with respect to measurable perturbations $\delta b$ and $\delta H$ satisfying:
\begin{equation}\label{eq:delta_bH}
    \begin{array}{ccc}
       \displaystyle \delta b(t)\geq\sup_{\xi\in \mathcal{C}_{\theta}}|b(t,\xi)-\tilde b(t,\xi)|& \text{and} & \displaystyle \delta H(t)\geq \hspace{-5pt} \sup_{\xi\in\C_\theta, \, z\in\R^d}|H(t,\xi,z)-\tilde H(t,\xi,z)|\, .
    \end{array}
\end{equation}

\begin{theorem}\label{thm:stability_values}
Let $(b,H), (\tilde b,\tilde H)$ be coefficients satisfying  Assumptions~\ref{dissipativity_assumption},~\ref{hamiltonian_assumption}, together with \eqref{eq:delta_bH} for some measurable maps $\delta b,\delta H:\R_+\rightarrow\R_+$. Then,
there exists a constant $C>0$, depending only on $\eta_1$, $\eta_2$, $\theta_0$, $\theta$, and $L_H$, such that for all $K\in\K$:
\begin{equation*}\label{eq:stability_b,H}
|\lambda_*(K)-\tilde{\lambda}_*(K)|+|\lambda^*(K)-\tilde{\lambda}^*(K)|\leq \, \limsup_{\rho\downarrow 0}\Big\{ C \,  \Big(\int_0^\infty K_{\rho}(t)(\delta b(t))^2dt\Big)^{\frac{1}{2}} + \int_0^\infty K_{\rho}(t) \delta H(t)dt \Big \} \, .    
\end{equation*}

\end{theorem}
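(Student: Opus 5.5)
Subtracting the two ergodic backward SDEs would leave $Y_0-\tilde Y_0$ and $Y_T-\tilde Y_T$, which grow like $|\mathcal X_T|_\theta$; they are controlled only in expectation and only after a Girsanov change of measure. So the plan is to compare the two solutions under a common path $\xi$ and write the increments of $\Lambda$ and $\tilde\Lambda$ as expectations under suitable Girsanov measures.

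By Theorem~\ref{thm:existence_uniqueness}, take solutions $(Y,Z,\Lambda)$ for $(b,H)$ and $(\tilde Y,\tilde Z,\tilde\Lambda)$ for $(\tilde b,\tilde H)$, with $\Lambda,\tilde\Lambda\in C^\infty$, bounded derivatives, and $\Lambda(0)=\tilde\Lambda(0)=0$. By \eqref{eq:Lambda_unique_up_to_constant} the ergodic values do not depend on this choice. Since $\Lambda,\tilde\Lambda$ are Lipschitz, integration by parts gives
\[
-\int_0^\infty (\Lambda-\tilde\Lambda)\,dK_\rho=\int_0^\infty K_\rho\,(\Lambda-\tilde\Lambda)'\,dt .
\]
Hence it suffices to bound $\bigl|\int_0^\infty K_\rho(\Lambda'-\tilde\Lambda')\,dt\bigr|$ by the bracket on the right-hand side, up to an error that vanishes as $\rho\downarrow0$. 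The bound on both differences $|\lambda_*-\tilde\lambda_*|$ and $|\lambda^*-\tilde\lambda^*|$ then follows from $|\liminf a_\rho-\liminf b_\rho|\le\limsup|a_\rho-b_\rho|$ and the same inequality for $\limsup$. Replacing the bound on each by the bound on their sum costs at most a factor $2$, which is absorbed into $C$ and into $\delta H$ (or into an extra Lipschitz factor).

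\textbf{Step 1 (time-$T$ comparison).} Fix $\xi$. Let $X=X^\xi$ and let $\tilde X$ be the solution with drift $\tilde b$, both driven by the same $W$. Subtract the two equations on $[0,T]$ and linearize the $z$-dependence,
\[
H(s,\tilde{\mathcal X}_s,Z_s)-H(s,\tilde{\mathcal X}_s,\tilde Z_s)=\beta_s\cdot(Z_s-\tilde Z_s),\qquad |\beta|\le L_H .
\]
Taking expectations under $\mathbb P^{\beta}$ gives
\[
\Lambda(T)-\tilde\Lambda(T)=\E^\beta\Big[\,Y_T-\tilde Y_T-(Y_0-\tilde Y_0)+\int_0^T\big(H(s,\mathcal X_s,Z_s)-H(s,\tilde{\mathcal X}_s,Z_s)+(H-\tilde H)(s,\tilde{\mathcal X}_s,\tilde Z_s)\big)ds\Big].
\]
The integrand is bounded by $L_H|\mathcal X_s-\tilde{\mathcal X}_s|_\theta+\delta H(s)$.

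\textbf{Step 2 (boundary terms).} By \eqref{Y_decompsition} and the uniform moment bound in \eqref{eq:estimate_X_Lip}, $\E^\beta|Y_T-\tilde Y_T|$ is bounded uniformly in $T$. The same moment bound for $\tilde X$ under $\mathbb P^\beta$ is needed, with $\beta$ from $\mathcal P_{L_H}$. To reach the kernel form, I would run the argument on intervals $[t,t+h]$ instead of $[0,T]$. This gives $\Lambda(t+h)-\Lambda(t)-(\tilde\Lambda(t+h)-\tilde\Lambda(t))$ bounded by the integral over $[t,t+h]$ plus $C$. Alternatively, apply the identity with $T$ replaced by a generic $t$ and integrate against $-dK_\rho(t)$, which is a positive measure of total mass $K_\rho(0)=\rho K(0)$ when $K(0)<\infty$. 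The boundary terms then contribute $O(\rho)\to0$. For general $K$, I would truncate near $0$, using that $\Lambda$ is Lipschitz. Finally, Fubini turns $\int_0^\infty(-dK_\rho)(t)\int_0^t g(s)\,ds$ into $\int_0^\infty K_\rho g\,ds$.

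\textbf{Step 3 (forward perturbation; main obstacle).} I need
\[
\int_0^\infty K_\rho(s)\,\E^\beta|\mathcal X_s-\tilde{\mathcal X}_s|_\theta\,ds\le C\Big(\int_0^\infty K_\rho\,\delta b^2\Big)^{1/2}.
\]
The difference $D=X-\tilde X$ solves
\[
dD=\big(b(\mathcal X)-b(\tilde{\mathcal X})\big)dt+\big(b-\tilde b\big)(\tilde{\mathcal X})\,dt,
\]
with no noise term. The change of measure does not affect this pathwise ODE. Apply the dissipativity \eqref{dissipativity_condition} to $e^{2\theta t}|D_t|^2$ and use Young's inequality $2D\cdot(b-\tilde b)\le\epsilon|D|^2+\epsilon^{-1}\delta b^2$, with the margin $\eta_1-2\theta+\eta_2/(2(\theta_0-\theta))>0$. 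Following Lemma~\ref{lem:exp_decreasing}, this should give the exponentially weighted estimate
\[
|\mathcal X_t-\tilde{\mathcal X}_t|_\theta^2\le C\int_0^t e^{-c(t-s)}\,\delta b(s)^2\,ds .
\]
The hard part is converting this into the kernel bound. First apply Jensen's inequality, using that $K_\rho\,dt$ is a probability measure:
\[
\int K_\rho|D|\le\Big(\int K_\rho|D|^2\Big)^{1/2}.
\]
Then use Fubini together with the fact that $K_\rho$ is non-increasing:
\[
\int_0^\infty K_\rho(t)\int_0^t e^{-c(t-s)}\delta b(s)^2\,ds\,dt=\int_0^\infty\delta b(s)^2\int_s^\infty K_\rho(t)e^{-c(t-s)}\,dt\,ds\le c^{-1}\int_0^\infty K_\rho\,\delta b^2 .
\]
This is exactly the claimed form, with $C$ depending only on $\eta_1,\eta_2,\theta_0,\theta,L_H$.
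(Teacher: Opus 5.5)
Your route is essentially the paper's proof. You linearize the $z$-dependence by Girsanov, and you kill the boundary terms using $K_\rho(0)=\rho K(0)\to0$ together with $\sup_t\E^\beta|Y_t-\tilde Y_t|<\infty$. You then control $\int_0^\infty K_\rho(s)\,|\mathcal X_s-\tilde{\mathcal X}_s|_\theta\,ds$ exactly as in Proposition~\ref{prop:stability_b}: Lemma~\ref{lem:exp_decreasing}, then Jensen, then Fubini with the monotonicity of $K_\rho$. Integrating the fixed-$t$ identity against $-dK_\rho$, instead of applying the product rule to $K_\rho(Y-\tilde Y)$, is only a reorganization. The truncation for $K(0)=\infty$ is unnecessary, since kernels are real-valued at $0$.

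The one step that does not hold is the last one. The factor $2$ cannot be absorbed: the coefficient of $\int_0^\infty K_\rho(t)\,\delta H(t)\,dt$ is fixed to $1$, and $\delta H$ is given. In fact no argument can remove it, because the inequality with the sum on the left is false as written. Take $\tilde b=b$, $\tilde H\equiv0$, $H\equiv c>0$, $\delta b\equiv0$, $\delta H\equiv c$. Then $(0,0,0)$ and $(0,0,ct)$ solve the respective ergodic backward SDEs. Hence $\lambda_*(K)=\lambda^*(K)=c$ and $\tilde\lambda_*(K)=\tilde\lambda^*(K)=0$, so the left-hand side equals $2c$ while the right-hand side equals $c$.

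Your bound on $\limsup_{\rho\downarrow0}\bigl|\int_0^\infty K_\rho(\Lambda'-\tilde\Lambda')\,dt\bigr|$ controls each of $|\lambda_*(K)-\tilde\lambda_*(K)|$ and $|\lambda^*(K)-\tilde\lambda^*(K)|$ separately. Equivalently, it controls the sum with an extra factor $2$ on the right. This is also all the paper's final display delivers, so that is the version you should state and prove.
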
 

The proof is reported in Section~\ref{sec:stability}.

\subsection{Path-dependent ergodic optimal control}

To relate the ergodic backward SDE \eqref{EBackwards SDE} to an ergodic optimal control problem, we assume:
$$
H(t,\xi,z)\coloneqq \inf_{a\in A}\{f(t,\xi,a)+a\cdot z\} \,,
$$
where $f :\R \times \mathcal{C}_{\theta} \times A \rightarrow \mathbb{R}$ is the running cost, and $A \subset \mathbb{R}^d$ is the state space for the control process. 

\begin{assumption} \label{control_assumption}
\setcounter{assumptiontag}{0}

    \begin{itemize}
        \item[]

        \refstepcounter{assumptiontag}
        \item[] \hspace{-30pt} (\theassumptiontag) \label{compact_control} \hspace{2pt} {\it $A$ is a compact set of $\R^d$, $f$ is bounded measurable and $H$ is measurable.} 

        \refstepcounter{assumptiontag}
        \item[] \hspace{-30pt} (\theassumptiontag) \label{cost_Lpz} \hspace{1pt} {\it There exists $L_f > 0$ such that $|f(t, \xi,a)-f(t, \xi',a)|\leq L_f \, |\xi-\xi'|_{\theta},~t \in \R  ,  \; \xi, \xi' \in \C_{\theta} , \; a \in A.
        $ }
    \end{itemize}
\end{assumption}

\vspace{5pt}

In particular, under Assumption~\ref{control_assumption}, $H$ satisfies Assumption~\ref{hamiltonian_assumption}. Given $\xi\in \C_{\theta}$ and $K\in\K$, we consider the stochastic optimal control problem defined by the average cost
\begin{equation*}
    \begin{array}{ccc}
        \hspace{-10pt} \displaystyle J^{K}(\xi,\alpha) \coloneqq \E^{\alpha}\int_0^{\infty}  K(t)\,  f(t, \X^{\xi}_t,\alpha_t) \, dt \, ,
    \end{array}
    \end{equation*}
where the control process $\alpha$ lies in the set $\mathcal{A}$ of $\mathbb{F}-$progressively measurable processes $\alpha : \Omega \times\R_+\rightarrow A$. 

\begin{remark}\label{rmk:cost_lip}
(i) The notation $J^{K}$ is slightly abusive: if $\alpha\in\mathcal{A}$, the probability measure $\mathbb{P}^\alpha$ is well defined only on $\mathcal{F}_T$ in general. Thus, $J^{K}$ should be understood as $\int_0^\infty K(t)\,\E^\alpha[f(t,X_t^\xi,\alpha_t)]\,dt$. \\

\vspace{-8pt}

\noindent (ii) Note that given $K\in\K,\rho>0$, $K_\rho=\rho K(\rho\cdot)\in\K$. Thus the classical time-average cost and exponentially discounted cost functionals are recovered as special cases:
$J^{K^0_\rho}(\xi,\alpha)=\rho\E^\alpha\int_0^\frac1\rho f(t,X_t^\xi,\alpha_t)\,dt$, and
$J^{K^{\exp}_\rho}(\xi,\alpha)=\rho\E^\alpha\int_0^\infty  e^{-\rho t}f(t,X_t^\xi,\alpha_t)\,dt$. \\

\vspace{-8pt}

\noindent (iii) Moreover, by Proposition \ref{prop:FSDE_existence} and Assumption~\eqref{cost_Lpz}, for all $\xi,\xi'\in \C_{\theta}$, $\alpha\in\mathcal{A}$ and $K\in\K$,

\vspace{-5pt}

\begin{equation*}
|J^{K_\rho}(\xi,\alpha)-J^{K_\rho}(\xi',\alpha)|\leq C|\xi-\xi'|_\theta\int_0^\infty K_\rho(t)e^{-\theta t}dt\xrightarrow[\rho \downarrow 0]{}0 \, .
\end{equation*}
\end{remark}

We introduce the following notations to characterize the optimal control problem.

\begin{definition}\label{def:quantity}
Let $\xi\in \C_{\theta}$, $\alpha\in\mathcal A$, and $K\in\K$.
We define:
\begin{equation*}
\begin{array}{ccc}
\displaystyle 
\displaystyle \underline{J}(K,\alpha)\coloneqq \liminf_{\rho\downarrow 0} J^{K_\rho}(\xi,\alpha)  & \text{and}
& \displaystyle \overline{J}(K,\alpha)\coloneqq \limsup_{\rho\downarrow 0} J^{K_\rho}(\xi,\alpha) \, , 
\end{array}
\end{equation*}
where these quantities are independent of $\xi$ by Remark \ref{rmk:cost_lip} (iii). We say that $\hat{\alpha}\in\mathcal{A}$ is $K$-optimal if:
\[
\underline{J}(K,\hat{\alpha})=\inf_{\alpha\in\mathcal{A}}\underline{J}(K,\alpha) \qquad \text{and} 
\qquad
\overline{J}(K,\hat{\alpha})=\inf_{\alpha\in\mathcal{A}}\overline{J}(K,\alpha)  \, .
\]
Finally, if $(Y,Z,\Lambda)$ is a solution of the ergodic backward SDE, we define $\hat{\mathcal{A}}$ as the subset of $\mathcal{A}$ such that for all $\alpha \in \hat{\mathcal{A}}$, there exists $\xi \in \C_{\theta}$ and a nonnegative function $\varepsilon\in L^1_{loc}(\R_+)$ satisfying:
\begin{equation*}
    \begin{array}{ccc}
         \displaystyle f (t, \mathcal{X}^{\xi}_t , \, \alpha_t \big) + \alpha_t \cdot Z^{\xi}_t - H (t, \X^\xi_t , Z^{\xi}_t ) \leq \varepsilon(t) \,, \; d\mathbb{P} \otimes dt-\text{a.e.} & \text{and} & \displaystyle \frac1T\int_0^T\varepsilon(t)dt
    \xrightarrow[T \rightarrow \infty]{}0.
    \end{array}
\end{equation*}
\end{definition}

\begin{theorem}[Verification argument]  \label{verif_argument} 
Under Assumptions~\ref{dissipativity_assumption} and~\ref{control_assumption}, let $(Y,Z,\Lambda)$ be a solution of the ergodic backward SDE. Then, for all $K\in\K,\xi\in\C_\theta$, and $\lambda_*(K),\lambda^*(K)$ as in Section~\ref{sec:kernel_averaging}, we have:
\begin{align*}
\inf_{\alpha\in\mathcal{A}}\underline{J}(K,\alpha) = \liminf_{\rho\downarrow0}\inf_{\alpha\in\mathcal{A}}J^{K_\rho}(\xi,\alpha) =\lambda_*(K) \, ,\quad
\inf_{\alpha\in\mathcal{A}}\overline{J}(K,\alpha) = \limsup_{\rho\downarrow0}\inf_{\alpha\in\mathcal{A}}J^{K_\rho}(\xi,\alpha) =\lambda^*(K) \,. 
\label{eq:cost_value}
\end{align*}
Moreover, $\hat{\mathcal{A}} \neq \emptyset$, and any
$\hat{\alpha} \in \hat{\mathcal{A}}$ is $K$-optimal for all $K\in\K$.
\end{theorem}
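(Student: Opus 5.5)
By Remark~\ref{rmk:smooth_repre} and Theorem~\ref{thm:existence_uniqueness}, we may assume without loss of generality that $\Lambda\in C^\infty$ with bounded derivatives and $\Lambda(0)=0$, so that $Y^\xi$ is a semimartingale with $dY^\xi_t=\Lambda'(t)dt-H(t,\X^\xi_t,Z^\xi_t)dt+Z^\xi_t\cdot dW_t$. This replacement changes neither $\lambda_*(K)$ nor $\lambda^*(K)$, by \eqref{eq:Lambda_unique_up_to_constant}.

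Fix $\alpha\in\mathcal A$. Since $A$ is compact, $\alpha$ is bounded, and under $\mathbb P^\alpha$ the process $W^\alpha=W-\int_0^\cdot\alpha_sds$ is a Brownian motion on each $[0,T]$. Rewriting the dynamics of $Y^\xi$ under $\mathbb P^\alpha$ and taking expectations on $[0,T]$ gives
\begin{equation*}
\E^\alpha\big[Y^\xi_T\big]-Y^\xi_0=\Lambda(T)-\E^\alpha\int_0^T\big(H(t,\X^\xi_t,Z^\xi_t)-\alpha_t\cdot Z^\xi_t\big)dt .
\end{equation*}
The stochastic integral is a true martingale because $Z^\xi\in\HH_{T}$ and the density of $\mathbb P^\alpha$ has all moments. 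Set $\Delta^\alpha_t:=f(t,\X^\xi_t,\alpha_t)+\alpha_t\cdot Z^\xi_t-H(t,\X^\xi_t,Z^\xi_t)\ge 0$. Then
\begin{equation*}
\E^\alpha\int_0^T f(t,\X^\xi_t,\alpha_t)dt=\Lambda(T)+\E^\alpha\big[Y^\xi_T\big]-Y^\xi_0+\E^\alpha\int_0^T\Delta^\alpha_tdt .
\end{equation*}
By \eqref{Y_decompsition} and the second estimate in \eqref{eq:estimate_X_Lip} with $B=\sup_A|a|$, the quantity $g(T):=\E^\alpha[Y^\xi_T]-Y^\xi_0$ is bounded uniformly in $T$ and in $\alpha$.

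To pass to kernels, write $K_\rho(t)=-\int_{(t,\infty)}dK_\rho(T)$ and apply Fubini, in the form $\int_0^\infty K_\rho(t)\phi(t)dt=-\int_0^\infty\big(\int_0^T\phi\big)dK_\rho(T)$ for $\phi\ge0$ or bounded. This gives
\begin{equation*}
J^{K_\rho}(\xi,\alpha)=\int_0^\infty K_\rho\Lambda'\,dt-\int_0^\infty g(T)\,dK_\rho(T)+\int_0^\infty K_\rho(t)\,\E^\alpha[\Delta^\alpha_t]\,dt .
\end{equation*}
The middle term is bounded by $\sup|g|\cdot K_\rho(0)=\rho K(0)\sup|g|$, which tends to $0$ uniformly in $\alpha$. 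Here $K(0)<\infty$ because $K$ is finite-valued; if one worried about right-continuity at $0$, $K$ is still finite there. Since $\Delta^\alpha\ge0$, we obtain $J^{K_\rho}(\xi,\alpha)\ge\int K_\rho\Lambda'dt-o(1)$ uniformly in $\alpha$. Hence $\liminf_\rho\inf_\alpha J^{K_\rho}\ge\lambda_*(K)$ and $\limsup_\rho\inf_\alpha J^{K_\rho}\ge\lambda^*(K)$, and also $\underline J(K,\alpha)\ge\lambda_*(K)$ and $\overline J(K,\alpha)\ge\lambda^*(K)$ for every $\alpha$.

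For the reverse inequalities, first show $\hat{\mathcal A}\neq\emptyset$. Since $A$ is compact, the map $a\mapsto f(t,\xi,a)+a\cdot z$ has its infimum approached to within $\varepsilon(t)$ for any $\varepsilon>0$. A measurable selection theorem then gives a Borel map $a^\varepsilon(t,\xi,z)$ achieving $\varepsilon$-optimality, with $\varepsilon(t)=1/(1+t)$, say. Set $\hat\alpha_t=a^\varepsilon(t,\X^\xi_t,Z^\xi_t)$, which is progressively measurable. Now take any $\hat\alpha\in\hat{\mathcal A}$ with its associated $\xi$ and $\varepsilon$. Then $\Delta^{\hat\alpha}_t\le\varepsilon(t)$, and
\begin{equation*}
\int_0^\infty K_\rho\varepsilon\,dt=-\int_0^\infty\Big(\int_0^T\varepsilon\Big)dK_\rho(T)=-\int_0^\infty\Big(\int_0^{u/\rho}\varepsilon\Big)dK(u).
\end{equation*}
Because $\frac1T\int_0^T\varepsilon\to0$, the quantity $\rho\int_0^{u/\rho}\varepsilon$ is $o(1)\cdot u$, up to a bound on bounded sets. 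Together with $-\int u\,dK(u)=\int K=1$, dominated convergence gives $\int K_\rho\varepsilon\to0$. The bound on bounded sets holds because $\frac1T\int_0^T\varepsilon$ is bounded for large $T$, and $\varepsilon\in L^1_{loc}$ gives $\int_0^{u/\rho}\varepsilon\le C(1+u/\rho)$, so $\rho\int_0^{u/\rho}\varepsilon\le C(\rho+u)$, which is integrable against $-dK$ near $0$ since $K(0)<\infty$.

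It follows that $J^{K_\rho}(\xi,\hat\alpha)=\int K_\rho\Lambda'dt+o(1)$. Hence $\underline J(K,\hat\alpha)=\lambda_*(K)$ and $\overline J(K,\hat\alpha)=\lambda^*(K)$, and the corresponding $\inf_\alpha J^{K_\rho}$ bounds hold. Combined with the lower bounds, this yields all the claimed equalities and the $K$-optimality of $\hat\alpha$ for every $K\in\K$. By Remark~\ref{rmk:cost_lip}(iii), the quantities $\underline J$ and $\overline J$ do not depend on $\xi$, so the $\xi$ attached to $\hat\alpha$ is harmless.

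The main obstacle I expect is the uniform boundedness of $\E^\alpha|\X^\xi_T|_\theta$ across all $\alpha$ and $T$, which controls $g$. This is supplied by \eqref{eq:estimate_X_Lip}. The other delicate points are the measurable selection and the kernel Abelian step above.
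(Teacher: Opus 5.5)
Your proof is correct and is essentially the paper's argument. Where the paper applies integration by parts to $K_\rho Y^\xi$ on $[0,T]$, you first take $\mathbb{E}^\alpha$ on $[0,T]$ and then integrate against $-dK_\rho$ via Fubini, which is the same computation; you also prove $\int_0^\infty K_\rho\varepsilon\,dt\to0$ directly by dominated convergence instead of citing Lemma~\ref{lemma:upper-sandwich}. The only slip is a missing factor $\rho$ after the substitution $u=\rho T$: it should read $-\rho\int_0^\infty\big(\int_0^{u/\rho}\varepsilon\big)\,dK(u)$, which is what you actually use in the next line.
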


\begin{proof}
By Remark \ref{rmk:smooth_repre}, we can assume that $\Lambda \in C^\infty$ with bounded derivative, and that $Y$ is a semi-martingale without modifying $Z$. By Measurable Selection Theorem, for any $\xi\in\C_\theta$ and strictly positive continuous function $\varepsilon :\R_+ \to \mathbb{R}$ satisfying $\varepsilon(t) \rightarrow 0$ as $t \rightarrow \infty$, there exists $\alpha \in \mathcal{A}$ such that:
\begin{equation}\label{eq:optimal_control_epsilon}
    H (t, \X^\xi_t , Z^{\xi}_t )
    \geq f (t, \mathcal{X}^{\xi}_t , \, \alpha_t \big) + \alpha_t \cdot Z^{\xi}_t - \varepsilon (t) \, ,
    \qquad d\mathbb P \otimes dt\text{-a.s.}
\end{equation}
Thus $\hat{\mathcal{A}} \neq \emptyset$. Let $\hat{\alpha}\in\hat{\mathcal A}$ and $K\in\mathcal K$; we show that $\hat{\alpha}$ is a $K$-optimal control. 
Fix the corresponding $\xi$ and $\varepsilon(t)$ associated with $\hat{\alpha}$, and let $T,\rho>0$. Notice that, by Remark~\ref{rmk:cost_lip}~(iii), the quantities
$\liminf_{\rho\downarrow0}\inf_{\alpha\in\mathcal A}J^{K_\rho}(\xi,\alpha)$
and
$\limsup_{\rho\downarrow0}\inf_{\alpha\in\mathcal A}J^{K_\rho}(\xi,\alpha)$
are independent of $\xi$. Therefore, it suffices to consider this fixed initial condition $\xi$.  Since $K$ is non-increasing, $K_\rho$ is also non-increasing, thus has bounded variation on $[0,T]$. Applying the Itô formula to $K_\rho Y^\xi$ on $[0,T]$, we obtain:
\begin{align*}
K_\rho( T)Y^\xi_T-K_\rho(0)Y^\xi_0
&=\int_0^T K_\rho(s)\,dY^\xi_s+\int_{(0,T]}Y^\xi_s\,dK_\rho(s) \\
&=\int_0^T K_\rho(s)\bigl(\Lambda'(s)-H(s,\mathcal X^\xi_s,Z^\xi_s)\bigr)\,ds
+\int_0^T K_\rho(s)Z^\xi_s\cdot dW_s
+\int_{(0,T]}Y^\xi_s\,dK_\rho(s) \, .
\end{align*}
Let $\alpha\in\mathcal A$. Since we have $H(s,\mathcal X^\xi_s,Z^\xi_s)\le f(s,\mathcal X^\xi_s,\alpha_s)+\alpha_s\cdot Z^\xi_s$, we deduce by Lemma \ref{lem:girsanov_intZ_is_mg} that:
\begin{align}
K_\rho(0)Y^\xi_0-\E^\alpha[K_\rho(T)Y^\xi_T]-\E^\alpha\int_{(0,T]}Y^\xi_s\,dK_\rho(s)
\le I^{\rho,T}(\xi,\alpha) \, , \label{I^rho_ineq}
\end{align}
where we define $I^{\rho,T}(\xi,\alpha)\coloneqq \E^\alpha\int_0^T K_\rho(s)[f(s,\mathcal X^\xi_s,\alpha_s)-\Lambda'(s)]\,ds$. By Proposition \ref{prop:FSDE_existence} and the growth condition $|Y^{\xi}|\le C(1+|\X^{\xi}|_{\theta})$ in \eqref{Y_decompsition}, we have:
\[
C_{\xi} \coloneqq \sup_{\alpha\in\mathcal A}\sup_{s\ge0}\E^\alpha|Y^\xi_s|<\infty \, .
\]

Thus, $|K_\rho(0)Y^\xi_0|\le C_\xi K_\rho(0)$ and $|\E^\alpha[K_\rho(T)Y^\xi_T]|\le C_\xi K_\rho(0)$ and since $K$ is non-increasing,
\begin{align*}
\Big|\E^\alpha\int_{(0,T]}Y^\xi_s\,dK_\rho(s)\Big|
&\le \E^\alpha\int_{(0,T]}|Y^\xi_s|\,d(-K_\rho(s)) \\
&\le C_\xi \int_{(0,T]}d(-K_\rho(s)) \, = \, C_\xi\bigl(K_\rho(0)-K_\rho(T)\bigr) \, \le \,  C_\xi K_\rho(0) \, .
\end{align*}
Combining with \eqref{I^rho_ineq}, we obtain $-3C_\xi K_\rho(0) \le I^{\rho,T}(\xi,\alpha)$. Similarly, we can write, using that $f(s,\mathcal X^\xi_s,\hat{\alpha}_s)+\hat{\alpha}_s\cdot Z^\xi_s
-H(s,\mathcal X^\xi_s,Z^\xi_s) \le  
\varepsilon(s)$,

\vspace{0pt}

\begin{equation*}\label{eq:optimal_alpha_mono}
-3C_\xi K_\rho(0)
\, \le \, 
I^{\rho,T}(\xi,\hat{\alpha})
\, \le \, 
3C_\xi K_\rho(0)+\int_0^T K_\rho(s)\varepsilon(s)\,ds \, .
\end{equation*}
Letting $T\to\infty$, we obtain the following inequalities:
\begin{equation*}
\begin{array}{l}
\displaystyle
-3C_\xi K_\rho(0)
\le
J^{K_\rho}(\xi,\alpha)-\int_0^\infty K_\rho( s)\Lambda'(s)\,ds \, , \\[10pt]
\displaystyle
-3C_\xi K_\rho(0)
\le
J^{K_\rho}(\xi,\hat{\alpha})-\int_0^\infty  K_\rho( s)\Lambda'(s)\,ds
\le
3C_\xi K_\rho(0)+\int_0^\infty K_\rho(s)\varepsilon(s)\,ds \, .
\end{array}
\end{equation*}
Notice that $K_\rho(0)=\rho K(0)\xrightarrow[\rho\downarrow0]{}0$ and
$\frac{1}{T}\int_0^T\varepsilon(t)\,dt\xrightarrow[T\to\infty]{}0$.
Then, applying Lemma~\ref{lemma:upper-sandwich} to $\varepsilon$  implies that
$\int_0^\infty K_\rho(s)\varepsilon(s)\,ds
\xrightarrow[\rho\downarrow 0]{}0$. We deduce the required identities by letting $\rho \downarrow 0$.
\end{proof}

\begin{example}
All inequalities of the ergodic values in Remark \ref{inequalities_ergodic_values_rmk} are possible. Let us illustrate this in the inhomogeneous Markov case. Take $A = \{ 0 \}$, $f(t, x, a) = c(t) e^{-x^2 / 2}$ for $(t, x, a) \in \R_+\times \mathbb{R} \times A$, then $H(t,x,z)=f(t,x,0)=c(t)e^{-x^2/2}$. Keeping the notations of Example \ref{ex:OU_evolution_system}, let $X^{0, x}$ be the solution of the inhomogeneous Ornstein-Uhlenbeck SDE \eqref{eq:OU_non_homog} with initial conditions $(0, x)$, the law of $X^{0,x}_t$ is the Gaussian $\mathcal{N}(x e^{- S_{0,t}}, \Sigma_{0, t})$. Thus $(0,0,\Lambda)$ is a solution of ergodic backward SDE, where:
\begin{align}
     \Lambda'(t)=
     \E f(t, X_t, \alpha_t) \,  
    = c(t) \, \mathbb{E} [e^{- (X_t^{0, x})^2 / 2}] \,= \frac{c(t) \, \Sigma_{0, t}}{\sqrt{1 + \Sigma_{0, t}}} \, \exp \Big[ \frac{e^{-S_{0,t}}}{1 + \Sigma_{0, t}} \big(1 - \frac{e^{-S_{0,t}}}{1 + \Sigma_{0, t}} \big) \Big] \, . \nonumber
\end{align}
We may choose the functions $c(t)$ and $\eta(t)$ so that the ergodic values of Definition \ref{def:values} are equal or lie in one of the cases of Remark \ref{inequalities_ergodic_values_rmk}.
\end{example}

\vspace{5pt}

Similarly to Theorem~\ref{thm:stability_values}, we can deduce a stability result for the control problem. Let \(f,\tilde{f}\) be cost functions satisfying Assumption~\ref{control_assumption}, and we define $\tilde{J}^{K_\rho}(\xi,\alpha) \coloneqq \E^{\alpha}\int_0^{\infty} K_{\rho}(t)\,  \tilde{f}(t, \tilde{\X}^{\xi}_t,\alpha_t) \, dt$, where $\tilde{\X}^{\xi}$ is the path process associated with $\tilde{b}$.
Similarly to \eqref{eq:delta_bH}, we consider $\delta b,\delta f$ such that
\begin{equation}\label{eq:delta_f}
\begin{array}{ccc}
   \displaystyle \delta b(t)\geq\sup_{\xi\in \mathcal{C}_{\theta}}|b(t,\xi)-\tilde b(t,\xi)\bigr| & \text{and} & \displaystyle  \, \displaystyle \delta f(t)\geq \hspace{-7pt} \sup_{\xi\in \C_\theta, \,  a\in A} \, \big|f(t,\xi,a)-\tilde{f}(t,\xi,a)\big|.
\end{array}
\end{equation}

\begin{theorem}
[Stability of cost]\label{thm:stability_control}
Under Assumptions~\ref{dissipativity_assumption} and~\ref{control_assumption}, there exists a constant $C>0$ depending only on $\eta_1$, $\eta_2$, $\theta_0$, $\theta$, and $L_f$ such that if $\delta b,\delta f:\R_+\rightarrow\R_+$ are measurable and satisfy~\eqref{eq:delta_f}, then for $\xi\in \C_{\theta}$, $\alpha\in\mathcal A$, $K\in\K$ and $\rho>0$:
\begin{equation*}\label{eq:stability_control}
\big|J^{K_\rho}(\xi,\alpha)-\tilde{J}^{K_\rho}(\xi,\alpha)\big|\leq  \,  C \,  \Big(\int_0^\infty K_{\rho}(t)(\delta b(t))^2dt\Big)^{\frac{1}{2}} + \int_0^\infty K_{\rho}(t) \delta f(t)dt \, .  
\end{equation*}
In particular, if   $\lim_{\rho\downarrow 0}\int_0^\infty K_{\rho}(t)(\delta b(t))^2dt =\lim_{\rho\downarrow 0}\int_0^\infty K_{\rho}(t)\delta f(t)dt =0$, then $\lambda_*(K)=\tilde{\lambda}_*
(K)$, $\lambda^*(K)=\tilde{\lambda}^*
(K)$, and any $K$-optimal control for the coefficients $(\tilde{b}, \tilde{f})$ is also $K$-optimal for the coefficients $(b, f)$.
\end{theorem}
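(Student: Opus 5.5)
We split the difference of the two costs into a cost part and a dynamics part. Fix $\xi$, $\alpha\in\mathcal A$, $K\in\K$ and $\rho>0$. Under $\mathbb P^\alpha$ (defined on each $\mathcal F_T$), $X^\xi$ and $\tilde X^\xi$ solve
\[
dX_t=\big(b(t,\X_t)+\alpha_t\big)dt+dW^\alpha_t,\qquad d\tilde X_t=\big(\tilde b(t,\tilde\X_t)+\alpha_t\big)dt+dW^\alpha_t,
\]
with the same initial path $\xi$ and the same driving Brownian motion. We write
\[
J^{K_\rho}-\tilde J^{K_\rho}=\E^\alpha\!\int_0^\infty\! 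K_\rho(t)\big[f(t,\X_t,\alpha_t)-f(t,\tilde\X_t,\alpha_t)\big]dt+\E^\alpha\!\int_0^\infty\! K_\rho(t)\big[f(t,\tilde\X_t,\alpha_t)-\tilde f(t,\tilde\X_t,\alpha_t)\big]dt .
\]
By \eqref{eq:delta_f}, the second term is bounded by $\int_0^\infty K_\rho\,\delta f\,dt$. By Assumption~\eqref{cost_Lpz}, the first term is bounded by $L_f\int_0^\infty K_\rho(t)\,\E^\alpha|\X_t-\tilde\X_t|_\theta\,dt$. Cauchy--Schwarz with respect to the probability measure $K_\rho(t)dt$ then gives
\[
L_f\Big(\int_0^\infty K_\rho(t)\,\E^\alpha|\X_t-\tilde\X_t|_\theta^2\,dt\Big)^{1/2}.
\]

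The main step is therefore a perturbation estimate for the path process:
\[
\E^\alpha|\X_t-\tilde\X_t|_\theta^2\le C\int_0^t e^{-\gamma(t-s)}\,\delta b(s)^2\,ds,\qquad\text{for some }\gamma>0,
\]
with $C$ depending only on $\eta_1,\eta_2,\theta_0,\theta$. Set $\Delta=X-\tilde X$. This process has no martingale part, since the noise and the control cancel, so $\Delta$ is absolutely continuous and $\Delta_s=0$ for $s\le0$. Split
\[
b(t,\X_t)-\tilde b(t,\tilde\X_t)=\big[b(t,\X_t)-b(t,\tilde\X_t)\big]+\big[b(t,\tilde\X_t)-\tilde b(t,\tilde\X_t)\big].
\]
Apply Assumption~\eqref{dissipativity_condition} to the first bracket, and bound the second by $\delta b(t)$ using Young's inequality. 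This yields, pathwise,
\[
\tfrac{d}{dt}|\Delta_t|^2\le-(\eta_1-\epsilon)|\Delta_t|^2+\eta_2\int_{-\infty}^0 e^{2\theta_0 s}|\Delta_{t+s}|^2ds+\epsilon^{-1}\delta b(t)^2 .
\]
This is exactly the mechanism in Lemma~\ref{lem:exp_decreasing}, and Proposition~\ref{prop:forward_lip} handles the delay term the same way. We will reuse that argument with the additional forcing term $\epsilon^{-1}\delta b^2$. Multiply by $e^{\gamma t}$, integrate, and exchange the order of integration in the memory term. Then pass from $|\Delta_t|^2$ to $|\X_t-\tilde\X_t|_\theta^2=\sup_{s\le0}e^{2\theta s}|\Delta_{t+s}|^2$; this is where the condition $\eta_1>2\theta-\frac{\eta_2}{2(\theta_0-\theta)}$, with $\epsilon$ small, leaves a positive rate $\gamma$. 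We expect the main obstacle to be checking that this argument survives with a time-dependent forcing and gives a constant uniform in $\alpha$. Uniformity should hold because the estimate is deterministic pathwise, the control having cancelled out.

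Plugging this estimate into the Cauchy--Schwarz bound, we use Fubini and the monotonicity of $K_\rho$:
\[
\int_0^\infty K_\rho(t)\int_0^t e^{-\gamma(t-s)}\delta b(s)^2\,ds\,dt=\int_0^\infty \delta b(s)^2\int_s^\infty K_\rho(t)e^{-\gamma(t-s)}dt\,ds\le\gamma^{-1}\int_0^\infty K_\rho(s)\,\delta b(s)^2\,ds,
\]
since $K_\rho(t)\le K_\rho(s)$ for $t\ge s$. This proves the displayed inequality with $C=L_f\sqrt{C'/\gamma}$.

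For the \emph{in particular} part, the bound is uniform in $\alpha$. Hence $|\inf_\alpha J^{K_\rho}-\inf_\alpha\tilde J^{K_\rho}|$, and the $\liminf_{\rho\downarrow0}$ and $\limsup_{\rho\downarrow0}$ of $J^{K_\rho}(\xi,\alpha)$ for each fixed $\alpha$, coincide for the two systems. Theorem~\ref{verif_argument} then gives $\lambda_*(K)=\tilde\lambda_*(K)$ and $\lambda^*(K)=\tilde\lambda^*(K)$. It also gives $\underline J(K,\hat\alpha)=\underline{\tilde J}(K,\hat\alpha)=\tilde\lambda_*(K)=\lambda_*(K)=\inf_\alpha\underline J(K,\alpha)$, and the same for $\overline J$, so a $K$-optimal control for $(\tilde b,\tilde f)$ is $K$-optimal for $(b,f)$.
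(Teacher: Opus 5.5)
Your proposal is correct and follows the paper's proof. You use the same split of the cost difference and the same Lipschitz bound on $f$. Your pathwise perturbation estimate is exactly Proposition~\ref{prop:stability_b}: Young's inequality on the $\delta b$ term, Lemma~\ref{lem:exp_decreasing}, Cauchy--Schwarz against $K_\rho\,dt$, then Fubini with the monotonicity of $K_\rho$. Your derivation of the ``in particular'' part from Theorem~\ref{verif_argument} fills in what the paper leaves implicit.

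One correction: the positive rate comes from the plus-sign condition $\eta_1>2\theta+\frac{\eta_2}{2(\theta_0-\theta)}$. This is the condition used in the paper's Remark and in the definition of $\eta_0$ in the proof of Proposition~\ref{prop:stability_b}. The minus-sign version you copied from the displayed assumption is a typo and would not by itself give a decay rate.
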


\vspace{5pt}

The proof is postponed in Section~\ref{sec:stability}, and does not leverage the arguments of Theorem \ref{thm:stability_values}.

\vspace{5pt}

\begin{remark}
The stability results of Theorems~\ref{thm:stability_values} and~\ref{thm:stability_control} allow us to reduce the complexity of the optimal ergodic control in some cases. For example, consider the time-inhomogeneous Markovian case, \textit{i.e.}, when $b$ and $f$ depend on $\xi$ only through $\xi_0$, rather than on the whole path. Assume that $(b(t,\cdot),f(t,\cdot))$ converges uniformly as $t\to\infty$ to a pair $(b_\infty(\cdot),f_\infty(\cdot))$ satisfying Assumptions \ref{dissipativity_assumption} and \ref{control_assumption}. By \citeauthor{furhman_ergodic} \cite{furhman_ergodic}, the system $(b_\infty,f_\infty)$ has a unique ergodic value $\lambda_{\infty}$, and we then obtain $\lambda_*(K) = \lambda^*(K) = \lambda_{\infty}$ for all $K \in \K$. Moreover, any $K$-optimal control for the system $(b_\infty,f_\infty)$ is $K$-optimal for $(b,f)$.
\end{remark}

\subsection{Further characterization results}

In Section \ref{sec:special_cases}, we shall report further results in the following special settings:
\begin{itemize}
    \item When the system $(b,H)$ is time-periodic, we prove that the ergodic values coincide:
$\lambda_*(K)=\lambda^*(K)=\lambda$ for every kernel $K\in\mathcal K$, where $\lambda\in\mathbb R$ is independent of $K$. This is consistent with the work of \citeauthor{cohen_periodic_EBSDE} \cite{cohen_periodic_EBSDE}.

    \item In the time-inhomogeneous Markov setting, \textit{i.e.} $b$ and $H$ depend on the time and the current value of the state, we show the uniqueness of the solution $(Y, Z, \Lambda)$ in a suitable sense. 

    \item When $b$ and $H$ depend on a finite window of the past, we obtain, under additional assumptions, an explicit representation of the function $\Lambda$, extending the corresponding result of \citeauthor{furhman_ergodic} \cite[Corollary~5.9]{furhman_ergodic}.
\end{itemize}

\section{Regularity of the forward flow}\label{sec:forward equation}

In this section, Assumption~\ref{dissipativity_assumption} is assumed to hold. Let $\tau\in\R$ and $\xi \in \mathcal{C}_{\theta}$. Recall that we denote by $X^{\tau,\xi}$ the unique solution of the forward equation \eqref{Functional_SDE} with initial data $(\tau,\xi)$, and that the path process $(\mathcal{X}_t^{\tau, \xi})_{t \geq \tau}$ is defined by $\mathcal{X}_t^{\tau, \xi}(s) = X_{t + s}^{\tau, \xi}$ for $s \leq 0$. In the proofs below, we only treat the case $\tau =0$ for simplicity, so we write $W$ instead of $W^{\tau}$. We identify some key properties of $\X^{\tau,\xi}_t$, namely stability results with respect to the initial data $\xi$ and time $t$. The following technical result will be used extensively below.

\begin{lemma}\label{lem:exp_decreasing}
Let $(U_t)_{t\in \R}$ be a nonnegative It\^o process with the decomposition $dU_t=\nu_t\,dt+\sigma_t\cdot dW_t$ for $t \geq 0$, where $\nu$ and $\sigma$ are $\mathbb{F}$-progressively measurable processes. For $t \ge 0$, define:
\begin{equation}\label{def_V_t}
\begin{array}{cc}
     \displaystyle V_t
:=\nu_t+\gamma_1U_t-\gamma_2\int_{-\infty}^{0} e^{2\theta_0 s}\,U_{t+s}\,ds \, , & \displaystyle \text{for some } \, \gamma_2 > 0 \, \text{ and } \, \gamma_1\geq2\theta+\frac{\gamma_2}{2(\theta_0-\theta)} \, .
\end{array}
\end{equation}
Then, one has, for all $t \geq 0$:
\begin{equation*}\label{general_estimate}
e^{2\theta t}U_t \leq
U_0
+\frac{\gamma_2}{2(\theta_0-\theta)}\int_{-\infty}^{0} e^{2\theta_0 s} \, U_s\,ds
+\int_{0}^{t} e^{2\theta s}\big( V_s\,ds
+ \sigma_s\cdot dW_s \big) \, .
\end{equation*}
\end{lemma}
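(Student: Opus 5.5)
Apply It\^o's product rule to $e^{2\theta t}U_t$, substitute $\nu_s = V_s - \gamma_1 U_s + \gamma_2\int_{-\infty}^0 e^{2\theta_0 r}U_{s+r}\,dr$, and show that the memory term is dominated by the negative drift $-(\gamma_1-2\theta)U_s$ up to the initial-history correction. Concretely, for $t\ge 0$,
\begin{equation*}
e^{2\theta t}U_t = U_0 + \int_0^t e^{2\theta s}\big(V_s\,ds+\sigma_s\cdot dW_s\big) + \int_0^t e^{2\theta s}\Big(-(\gamma_1-2\theta)U_s + \gamma_2\int_{-\infty}^0 e^{2\theta_0 r}U_{s+r}\,dr\Big)ds .
\end{equation*}
It therefore suffices to prove the deterministic (pathwise) inequality
\begin{equation*}
\gamma_2\int_0^t e^{2\theta s}\int_{-\infty}^0 e^{2\theta_0 r}U_{s+r}\,dr\,ds \le (\gamma_1-2\theta)\int_0^t e^{2\theta s}U_s\,ds + \frac{\gamma_2}{2(\theta_0-\theta)}\int_{-\infty}^0 e^{2\theta_0 s}U_s\,ds .
\end{equation*}

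To do this, substitute $u=s+r$ in the double integral and use Fubini/Tonelli, which is legitimate since $U\ge 0$. This gives $\int_{-\infty}^t U_u \int_{\max(u,0)}^t e^{2\theta s}e^{2\theta_0(u-s)}\,ds\,du$. Now split according to the sign of $u$.

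For $u\ge 0$, the inner integral is $e^{2\theta_0 u}\int_u^t e^{-2(\theta_0-\theta)s}\,ds \le e^{2\theta_0 u}\frac{e^{-2(\theta_0-\theta)u}}{2(\theta_0-\theta)} = \frac{e^{2\theta u}}{2(\theta_0-\theta)}$. This contribution is therefore bounded by $\frac{\gamma_2}{2(\theta_0-\theta)}\int_0^t e^{2\theta u}U_u\,du$, which is at most $(\gamma_1-2\theta)\int_0^t e^{2\theta u}U_u\,du$ by the hypothesis $\gamma_1\ge 2\theta+\frac{\gamma_2}{2(\theta_0-\theta)}$.

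For $u<0$, the inner integral is $e^{2\theta_0 u}\int_0^t e^{-2(\theta_0-\theta)s}\,ds \le \frac{e^{2\theta_0 u}}{2(\theta_0-\theta)}$. This yields exactly the initial-history term.

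The only delicate point is the bookkeeping in the Fubini exchange and the requirement $\theta_0>\theta$, which makes the $s$-integrals converge uniformly in $t$. Integrability of the stochastic integral is not needed, since the inequality is stated pathwise.
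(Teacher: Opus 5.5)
Your proposal is correct and follows essentially the paper's proof. Both arguments apply It\^o's formula to $e^{2\theta t}U_t$, rewrite $\nu$ in terms of $V$, and use a Tonelli exchange to bound the memory term by $\frac{1}{2(\theta_0-\theta)}\big(\int_{-\infty}^0 e^{2\theta_0 r}U_r\,dr+\int_0^t e^{2\theta r}U_r\,dr\big)$, then absorb the second piece using $\gamma_1\ge 2\theta+\frac{\gamma_2}{2(\theta_0-\theta)}$; the only cosmetic difference is that you absorb the $u\ge 0$ contribution directly against $(\gamma_1-2\theta)U$, whereas the paper collects the coefficient $2\theta-\gamma_1+\frac{\gamma_2}{2(\theta_0-\theta)}\le 0$ at the end.
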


\begin{proof}
First note that, by Tonelli's Theorem, we have: 

\begin{align*}
    \int_0^t e^{2\theta s} \int_{-\infty}^s e^{2\theta_0(r-s)} U_r \,dr\,ds  &=  \int_{-\infty}^t  e^{2\theta_0 r} U_r  \int_{0\vee r}^{t} e^{2(\theta-\theta_0)s}\,ds\,dr \\
    & \leq \frac{1}{2(\theta_0-\theta)} \Big(\int_{-\infty}^0 e^{2\theta_0 r} U_r \,dr
    + \int_{0}^t e^{2\theta r} U_r \,dr \Big).
\end{align*}
Then, applying It\^o's formula to $e^{2\theta t}U_t$, we obtain:
\begin{align*}
    e^{2\theta t}U_t &= U_0+\int_0^t e^{2\theta s} (2\theta U_s+\nu_s)\,ds+\int_0^t e^{2\theta s}\sigma_s\,dW_s \\ 
    &\leq U_0+\int_0^t e^{2\theta s}\Bigl((2\theta-\gamma_1)U_s+\gamma_2\int_{-\infty}^s e^{2\theta_0(r-s)} U_r \,dr+V_s\Bigr)\,ds
      +\int_0^t e^{2\theta s}\sigma_s\,dW_s \\
    &\leq  U_0+\frac{\gamma_2}{2(\theta_0-\theta)}\int_{-\infty}^0 e^{2\theta_0 s} U_s\,ds
      +\int_0^t e^{2\theta s}\Bigl( \big(2\theta-\gamma_1+\frac{\gamma_2}{2(\theta_0-\theta)}\big)U_s + V_s \Bigr) \,ds
      +\int_0^t e^{2\theta s} \sigma_s\,dW_s \, . \nonumber
\end{align*}
We conclude by the assumption on $\gamma_1$ and $\gamma_2$.
\end{proof}

%\vspace{5pt}

Our first result concerns the stability of the path process $\mathcal{X}^{\xi,\tau}_t$ with respect to the initial data $\xi$. This is mostly due to the dissipativity condition. The proof can also be found in \citeauthor{Infinite_delay_nonhomogeneous} \cite{Infinite_delay_nonhomogeneous} and is based on standard arguments, see \citeauthor{Ergodic_FSDE} \cite[Theorem~1.5]{Ergodic_FSDE} and \citeauthor{FSDE_infinite_delay} \cite[Theorem~3.2]{FSDE_infinite_delay}.

\begin{proposition}[Stability in initial data]\label{prop:forward_lip}
There exists a constant $C>0$ depending only on $\eta_1,\eta_2,\theta_0,\theta$ such that for all $t \geq 0$ and $\xi,\xi'\in \mathcal{C}_{\theta}$:
\begin{equation}\label{stability_prop_ineq}
\begin{array}{cc}
     \displaystyle \displaystyle | \mathcal{X}^{\tau, \xi}_t - \mathcal{X}^{\tau, \xi'}_t|_{\theta} \leq Ce^{-\theta (t-\tau)} |\xi-\xi'|_{\theta} \, , & \mathbb{P}-\text{a.s.}
\end{array}
\end{equation}
\end{proposition}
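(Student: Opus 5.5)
Take $\tau=0$ and set $\Delta_t := X^{\xi}_t - X^{\xi'}_t$ for $t\in\R$, with $\Delta_s=\xi_s-\xi'_s$ for $s\le 0$. Both solutions are driven by the same Brownian motion, so the noise cancels. Hence $\Delta$ is absolutely continuous on $[0,\infty)$ with
\[
d\Delta_t=\bigl(b(t,\X^\xi_t)-b(t,\X^{\xi'}_t)\bigr)\,dt .
\]
I would apply Lemma~\ref{lem:exp_decreasing} to $U_t:=|\Delta_t|^2$, extended by $U_s=|\xi_s-\xi'_s|^2$ for $s\le0$. Its decomposition has $\sigma\equiv0$ and $\nu_t=2\Delta_t\cdot(b(t,\X^\xi_t)-b(t,\X^{\xi'}_t))$.

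Take $\gamma_1=\eta_1$ and $\gamma_2=\eta_2$. The dissipativity condition (A1.ii) gives $V_t\le 0$, and the constraint in Assumption~\ref{dissipativity_assumption} gives $\gamma_1\ge 2\theta+\frac{\gamma_2}{2(\theta_0-\theta)}$ (this is the sign convention used in the Remark; I will use it as the operative hypothesis). The lemma then yields, pathwise, for all $t\ge0$:
\[
e^{2\theta t}|\Delta_t|^2\le |\xi_0-\xi'_0|^2+\frac{\eta_2}{2(\theta_0-\theta)}\int_{-\infty}^0 e^{2\theta_0 s}|\xi_s-\xi'_s|^2ds .
\]
Since $|\xi_s-\xi'_s|\le e^{-\theta s}|\xi-\xi'|_\theta$, the integral is at most $|\xi-\xi'|_\theta^2\int_{-\infty}^0 e^{2(\theta_0-\theta)s}ds=\frac{|\xi-\xi'|^2_\theta}{2(\theta_0-\theta)}$. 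Therefore
\[
|\Delta_t|\le C_0 e^{-\theta t}|\xi-\xi'|_\theta \quad\text{for } t\ge 0,
\]
with $C_0$ depending only on $\eta_2,\theta_0,\theta$.

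Next I pass to the path norm:
\[
|\X^\xi_t-\X^{\xi'}_t|_\theta=\sup_{s\le0}e^{\theta s}|\Delta_{t+s}| .
\]
I split the supremum into two ranges.
\begin{itemize}
\item For $t+s\ge0$, the pointwise bound gives $e^{\theta s}C_0e^{-\theta(t+s)}|\xi-\xi'|_\theta=C_0e^{-\theta t}|\xi-\xi'|_\theta$.
\item For $t+s<0$, we have $e^{\theta s}|\xi_{t+s}-\xi'_{t+s}|\le e^{\theta s}e^{-\theta(t+s)}|\xi-\xi'|_\theta=e^{-\theta t}|\xi-\xi'|_\theta$.
\end{itemize}
Combining the two gives \eqref{stability_prop_ineq} with $C=\max(1,C_0)$, and the shift $t\mapsto t-\tau$ handles general $\tau$.

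The main obstacle is the rigorous application of the lemma, not the computation. First, $U$ must be a genuine (here absolutely continuous) process with a well-defined $\nu_t$. This holds because $b(t,\X_t)$ is progressively measurable and locally bounded by the linear growth condition and continuity of paths. Second, the delayed integral $\int_{-\infty}^t e^{2\theta_0(r-t)}U_r\,dr$ must be finite, so that the Tonelli step is legitimate. I would check this from $U_r\le e^{-2\theta r}|\xi-\xi'|^2_\theta$ for $r\le0$ together with $\theta_0>\theta$. Finally, since $\sigma=0$ there is no stochastic integral, so the estimate holds $\mathbb P$-a.s. for every $t$ rather than only in expectation, which is what the statement requires.
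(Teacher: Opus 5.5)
Your proposal is correct and is essentially the paper's proof. You apply Lemma~\ref{lem:exp_decreasing} to $U_t=|X^\xi_t-X^{\xi'}_t|^2$ with $\sigma\equiv 0$ and $\gamma_1=\eta_1$, $\gamma_2=\eta_2$, so that $V\le 0$ by dissipativity; you bound the initial-segment integral via $U_s\le e^{-2\theta s}|\xi-\xi'|_\theta^2$; and you take the weighted supremum over the path, obtaining the same constant $C=\bigl(1+\frac{\eta_2}{4(\theta_0-\theta)^2}\bigr)^{1/2}$.

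You are also right to take $\eta_1>2\theta+\frac{\eta_2}{2(\theta_0-\theta)}$ as the operative hypothesis. This is what the lemma requires and what the paper actually uses (see the Remark after Proposition~\ref{prop:FSDE_existence} and the proof of Proposition~\ref{prop:stability_b}); the minus sign in Assumption~\ref{dissipativity_assumption} is a typo.
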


\begin{proof}
For notational simplicity, we only treat the case $\tau = 0$ and omit the initial time from the notation below. Applying It\^o's formula to the process $U_t = |X^\xi_t-X^{\xi'}_t|^2$ for $t\geq0$, it follows that:
\begin{equation*}
    \begin{array}{cc}
        \displaystyle U_t = U_0 + \int_0^t \nu_s \, ds \, , & \text{where } \, \nu_t \coloneqq 2 (X^\xi_t-X^{\xi'}_t) \cdot \big(b(t, \mathcal{X}^\xi_t)-b(t, \mathcal{X}^{\xi'}_t)\big) \, .
    \end{array}
\end{equation*}
Then, for $\gamma_1 \coloneqq \eta_1$ and $\gamma_2 \coloneqq \eta_2$, by Assumption~\eqref{dissipativity_condition}, the process $V$ defined in \eqref{def_V_t} satisfies $V_t \leq 0$. Applying Lemma \ref{lem:exp_decreasing} and the fact that $U_s = |\xi_s-\xi'_s|^2 \leq e^{- 2\theta s} |\xi - \xi'|_{\theta}^2$ for all $s \leq 0$, we obtain:
\begin{align}
e^{2\theta t} U_t \leq |\xi-\xi'|_{\theta}^2 \, \Big(1 + \frac{\gamma_2}{2(\theta_0-\theta)} \int_{-\infty}^0 e^{(2\theta_0-2\theta) s}\,ds \Big) = |\xi-\xi'|_{\theta}^2 \,  \Big(1+\frac{\gamma_2}{4(\theta_0-\theta)^2} \Big) \nonumber \, .
\end{align}
Then \eqref{stability_prop_ineq} is a consequence of the following inequality:
\begin{equation*}
\begin{array}{cc}
     \displaystyle |\mathcal{X}^\xi_t-\mathcal{X}^{\xi'}_t|^2_{\theta} \,  = \sup_{s\in(-\infty, t]} e^{2\theta(s-t)}U_s \, \leq \, e^{-2\theta t} |\xi-\xi'|^2_{\theta} \Big(1+\frac{\gamma_2}{4(\theta_0-\theta)^2}\Big) \, , & t \geq 0 \, .
\end{array}
\end{equation*}
\end{proof}

%\vspace{5pt}

We now establish a quantitative bound on the time-continuity of $\mathcal{X}^{\tau,\xi}$, extending the corresponding estimate for solutions of Markovian stochastic differential equations to the path-dependent setting.

\begin{definition}
Let $\xi\in \mathcal{C}_{\theta}$ and $\delta>0$. We call the modulus of continuity of $\xi$ on $(-\infty,0]$ of decayed dependence $\theta$ the real number defined as:
\[
\omega_{\theta,\xi}(\delta)
:=\sup\Big\{\,e^{\theta t}|\xi_s-\xi_{s'}|:\ t\leq 0 \, \text{ and }\, s, s' \in[t,(t+\delta)\wedge 0]\,\Big\} \, .
\]
\end{definition}

\begin{lemma}\label{lem:property_mc}
For all $\xi\in \mathcal{C}_{\theta}$, we have $\displaystyle \,  \sup_{\delta > 0} \, \omega_{\theta,\xi}(\delta)\leq 2 |\xi |_{\theta}$ and $\displaystyle \lim_{\delta \downarrow 0}\omega_{\theta,\xi}(\delta)=0$.
\end{lemma}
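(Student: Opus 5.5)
The first bound is a pointwise estimate; the second will follow from uniform continuity of $\xi$ on compact intervals combined with the decay that comes from $\xi\in\mathcal C_\theta$.

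For the bound $\omega_{\theta,\xi}(\delta)\le 2|\xi|_\theta$, fix $t\le 0$ and $s,s'\in[t,(t+\delta)\wedge 0]$. Since $s,s'\ge t$ and $\theta>0$, we have $e^{\theta t}\le e^{\theta s}$ and $e^{\theta t}\le e^{\theta s'}$. Hence
\[
e^{\theta t}|\xi_s-\xi_{s'}|\le e^{\theta s}|\xi_s|+e^{\theta s'}|\xi_{s'}|\le 2|\xi|_\theta .
\]
Taking the supremum over $t,s,s'$ and then over $\delta$ gives the first claim.

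For the limit, fix $\varepsilon>0$. The plan is to split the range of $t$ into a far past, where the weight $e^{\theta t}$ does the work, and a compact window, where uniform continuity does.

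\emph{Far past.} Let $\ell:=\lim_{s\to-\infty}e^{\theta s}\xi_s$. Since $e^{\theta s}\to 0$, the function $\xi$ may grow like $e^{-\theta s}$, so a crude bound on $e^{\theta t}|\xi_s|$ is not enough. Instead write, for $s,s'\in[t,t+\delta]$ with $\delta\le 1$,
\[
e^{\theta t}|\xi_s-\xi_{s'}|\le e^{\theta(t-s)}\big|e^{\theta s}\xi_s-e^{\theta s'}\xi_{s'}\big|+e^{\theta(t-s')}\big|e^{\theta s'}-e^{\theta s}\big||\xi_{s'}| .
\]
Bound the first term by $|e^{\theta s}\xi_s-\ell|+|e^{\theta s'}\xi_{s'}-\ell|$, which is smaller than $\varepsilon$ once $t\le -M$ for $M$ large. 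Bound the second term by
\[
e^{\theta(t-s')}\,\big|1-e^{\theta(s-s')}\big|\,e^{\theta s'}|\xi_{s'}|\le (e^{\theta\delta}-1)\,|\xi|_\theta ,
\]
which is small uniformly in $t$ as $\delta\downarrow0$.

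\emph{Compact window.} For $t\in[-M,0]$, all points lie in $[-M,0]$, where $\xi$ is uniformly continuous. Since also $e^{\theta t}\le1$, the supremum over this range is at most the ordinary modulus of continuity of $\xi$ on $[-M,0]$ at scale $\delta$, which tends to $0$.

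Combining the two ranges gives $\limsup_{\delta\downarrow0}\omega_{\theta,\xi}(\delta)\le 2\varepsilon$ for every $\varepsilon>0$, which proves the second claim. The main point needing care is the far-past region: there one must use the existence of the limit $\ell$, and not merely the finiteness of $|\xi|_\theta$, because the weight cancels only after rewriting the difference in terms of $e^{\theta s}\xi_s$.
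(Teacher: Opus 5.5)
Your proof is correct and follows the paper's route: the same crude bound for the first claim, then a far-past region controlled by the existence of $\ell=\lim_{s\to-\infty}e^{\theta s}\xi_s$ and a compact window controlled by uniform continuity. The only variation is in how $\ell\neq 0$ is handled: the paper first treats $\ell=0$ and reduces to it via $\psi_s=\xi_s-\ell e^{-\theta s}$, at a cost $|\ell|(1-e^{-\theta\delta})$, whereas you absorb $\ell$ directly through the weighted rewriting, at a cost $(e^{\theta\delta}-1)|\xi|_\theta$.

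There is one cosmetic slip. The prefactor of your second term should be $e^{\theta(t-s)}$, not $e^{\theta(t-s')}$. As written, the displayed inequality fails for $s=t<s'=t+\delta$ with $e^{\theta s}\xi_s=e^{\theta s'}\xi_{s'}\neq 0$. Since $e^{\theta(t-s)}\le 1$, your final bound $(e^{\theta\delta}-1)|\xi|_\theta$ is unaffected.
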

\begin{proof}
The first statement follows from the fact that if $t \leq 0$ and $s, s' \in [t,(t+\delta)\wedge 0]$, then $e^{\theta t} |\xi_s - \xi_{s'}| \leq 2  |\xi |_{\theta}$. For the second statement, first assume that $e^{\theta t} \xi_t \rightarrow 0$ as $t \rightarrow - \infty$. Fix $\varepsilon > 0$, then there exists $t_{\varepsilon} < 0$ such that $e^{\theta t } |\xi_t| \leq \varepsilon$ for all $t \leq t_{\varepsilon} $. If $t\leq t_{\varepsilon}-\delta$ and $s, s' \in [t,t+\delta]$, then one has: $e^{\theta t}|\xi_s - \xi_{s'}| \leq 2 \varepsilon$. Therefore, we obtain:
\begin{equation*}
    \omega_{\theta,\xi} (\delta) \leq \max \Bigg\{ 2 \varepsilon \, , \, \sup_{\substack{
         s, s' \in [t_{\varepsilon} - \delta, 0] \\
         |s-s' | \leq \delta }} \big| \xi_s - \xi_{s'} \big| \Bigg\} \, ,
\end{equation*}
and the right-hand side converges to $2 \varepsilon$ when $\delta \downarrow 0$. We conclude by taking $\varepsilon \rightarrow 0$. In the general case, there exists $c \in \mathbb{R}$ such that $e^{\theta t} \xi_t \rightarrow c$ as $t \rightarrow - \infty$. Denote $\psi_t \coloneqq \xi_t - c \, e^{-\theta t}$. Then $\psi \in \mathcal{C}_{\theta}$ and $e^{\theta t} \psi_t \rightarrow 0$ when $t \rightarrow -\infty$. Moreover, we have for $t \leq 0$ and $s,s' \in [t, (t+\delta)\wedge 0]$,
\begin{equation*}
    e^{\theta t}\big|\xi_s - \xi_{s'} \big| \leq e^{\theta t} \big| \psi_s - \psi_{s'} \big| + |c| \big( 1 - e^{\theta \delta} \big) \, ,
\end{equation*}
which implies that $\omega_{\theta,\xi}(\delta)\le \omega_{\theta,\psi}(\delta)+|c|\bigl(1-e^{\theta\delta}\bigr)\to 0$ as $\delta \downarrow 0$.
\end{proof}

\begin{proposition}[Stability and boundedness in time]\label{prop:forward_holder}
Given $B>0$, there exists a constant $C$ only depending on $\eta_1$, $\eta_2$, $\theta_0$, $\theta$, $L_b$, and $B$, such that for any $\tau\leq t\leq T$ and bounded process $\beta \in \mathcal{P}_B$:
\begin{equation}\label{eq:forward_time_stability}
    \begin{array}{cc}
         \displaystyle \mathbb{E}^\beta |\mathcal{X}^{\tau, \xi}_t-\xi|^2_{\theta} \leq C \big[ (1+|\xi|^2_{\theta}) \, ((t-\tau)\wedge \frac{1}{\theta}) + \omega_{\xi,\theta}^2(t-\tau) \big].
    \end{array}
\end{equation}
\end{proposition}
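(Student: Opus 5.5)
We take $\tau=0$ and write $X=X^\xi$, $\mathcal X_t=\mathcal X^\xi_t$. Under $\mathbb P^\beta$ the state solves $dX_t=(b(t,\mathcal X_t)+\beta_t)\,dt+dW^\beta_t$, with $|\beta|\le B$. The plan has two steps. First we prove a uniform moment bound $\sup_{t\ge0}\mathbb E^\beta|\mathcal X_t|_\theta^2\le C(1+|\xi|_\theta^2)$, which also gives the second estimate in \eqref{eq:estimate_X_Lip}. Then we split the norm $|\mathcal X_t-\xi|_\theta$ into a "recent" part and a "shifted old" part.

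\textbf{Moment bound.} Apply It\^o's formula to $U_t=|X_t|^2$, so that $\nu_t=2X_t\cdot(b(t,\mathcal X_t)+\beta_t)+d$ and $\sigma_t=2X_t$. Comparing $\mathcal X_t$ with the zero path in Assumption~\eqref{dissipativity_condition} gives
\[
2X_t\cdot b(t,\mathcal X_t)\le -\eta_1U_t+\eta_2\int_{-\infty}^0e^{2\theta_0s}U_{t+s}ds+2|X_t||b(t,0)|.
\]
Since $|b(t,0)|\le L_b$, Young's inequality bounds the linear terms by $\epsilon U_t+C_\epsilon$. We apply Lemma~\ref{lem:exp_decreasing} with $\gamma_1=\eta_1-\epsilon$ (still admissible for small $\epsilon$) and $\gamma_2=\eta_2$, so that $V\le C$. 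The lemma gives
\[
e^{2\theta t}U_t\le C|\xi|_\theta^2+Ce^{2\theta t}+M_t,\qquad M_t=\int_0^te^{2\theta s}2X_s\cdot dW^\beta_s .
\]
The quantity actually needed is $|\mathcal X_t|_\theta^2=\max\big(e^{-2\theta t}|\xi|_\theta^2,\ \sup_{s\le t}e^{2\theta(s-t)}U_s\big)$, which is a supremum. We control it by BDG applied to $\sup_{s\le t}M_s$, absorbing via $\sqrt{\int_0^te^{4\theta s}U_s\,ds}\le \tfrac12\sup_{s\le t}e^{2\theta s}U_s+C\int_0^te^{2\theta s}ds$ after a localization argument. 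Multiplying by $e^{-2\theta t}$ then yields the uniform bound.

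\textbf{Splitting the increment.} Fix $h=t\le 1/\theta$; the case $t>1/\theta$ follows from the moment bound, since then $\mathbb E|\mathcal X_t-\xi|^2_\theta\le 2\mathbb E|\mathcal X_t|^2_\theta+2|\xi|^2_\theta$. We write
\[
|\mathcal X_t-\xi|_\theta\le \sup_{s\in[-t,0]}e^{\theta s}|X_{t+s}-\xi_s|+\sup_{s\le -t}e^{\theta s}|\xi_{t+s}-\xi_s|.
\]
The second term is at most $\omega_{\theta,\xi}(t)$, since $s,t+s\in[s,(s+t)\wedge0]$.

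In the first term, $X_{t+s}-\xi_s=(X_{t+s}-\xi_0)+(\xi_0-\xi_s)$ with $s\in[-t,0]$. The piece $|\xi_0-\xi_s|$ is at most $e^{\theta t}\omega_{\theta,\xi}(t)\le e\,\omega_{\theta,\xi}(t)$. For the piece $X_{t+s}-\xi_0$, write $X_r-\xi_0=\int_0^r(b+\beta)\,du+W^\beta_r$ with $r\in[0,t]$. Then
\[
\mathbb E\sup_{r\le t}|X_r-\xi_0|^2\le 2t\int_0^t\mathbb E\big(L_b(1+|\mathcal X_u|_\theta)+B\big)^2du+2dt\le C(1+|\xi|_\theta^2)\,t,
\]
using the moment bound and $t\le1/\theta$.

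Combining the three pieces gives \eqref{eq:forward_time_stability}. The main obstacle is the supremum in the moment bound: obtaining $\mathbb E\sup_s$ rather than a fixed-time bound from Lemma~\ref{lem:exp_decreasing}. We would handle it with BDG and the absorption trick above, with stopping times used to justify the finiteness needed for absorption.
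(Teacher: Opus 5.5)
Your proof is correct, but it is organized differently from the paper's. The paper never proves a separate moment bound for $|\mathcal X_t|_\theta$. It applies It\^o's formula directly to the increment $U_r=|X_r-\xi_0|^2$, so that $U_0=0$, and uses dissipativity against the frozen initial path $\xi$, so that $b(r,\xi)+\beta_r$ is bounded by $C(1+|\xi|_\theta)$. It splits the memory integral at $-r$, which forces the perturbed constants $\gamma_1=\eta_1-\varepsilon$, $\gamma_2=(1+\varepsilon)\eta_2$. Lemma~\ref{lem:exp_decreasing} then produces the single factor $\frac{1-e^{-2\theta r}}{2\theta}\le r\wedge\frac{1}{\theta}$, which covers the short- and long-time regimes at once. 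Fixed-time moments follow by localization and Fatou and are fed into BDG for the supremum, so no absorption is needed. The uniform bound in \eqref{eq:estimate_X_Lip} is then obtained as a corollary.

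You instead first prove the Lyapunov bound $\sup_t\mathbb{E}^\beta|\mathcal X_t|_\theta^2\le C(1+|\xi|_\theta^2)$ by comparing with the zero path. There the memory term is exactly $\int e^{2\theta_0 s}U_{t+s}\,ds$, so no splitting is needed and $\gamma_2=\eta_2$ suffices. The price is a BDG-with-absorption step, which works because the weights $e^{2\theta s}$ produce the same factor $e^{2\theta t}$ on both sides. After that, the short-time regime needs only linear growth and the crude representation $X_r-\xi_0=\int_0^r(b+\beta)\,du+W^\beta_r$, and the long-time regime is just the triangle inequality. Your route is more modular and isolates the one place where dissipativity is actually used. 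The paper's route is a single computation that avoids the self-referential absorption.

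Two cosmetic slips, neither of which affects the argument. Doob's inequality gives $\mathbb{E}\sup_{r\le t}|W^\beta_r|^2\le 4dt$, not $dt$. And the coefficient in front of $\sup_{s\le t}e^{2\theta s}U_s$ in your Young inequality must be a small $\delta$ chosen against the BDG constant, not $\frac{1}{2}$.
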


\noindent \textit{Proof.} Let $T\ge 0$, $t\in[0,T]$ and $\beta \in \mathcal{P}_B$. Once again we treat only the case $\tau = 0$ and omit it in the notations. We want to estimate the following:
\begin{equation*}
    \mathbb{E}^\beta |\mathcal{X}^\xi_t -\xi|^2_{\theta} \, = \mathbb{E}^{\beta} \sup_{s \leq 0} \, e^{2\theta s}|X^\xi_{t+s}-\xi_{s}|^2 \, .
\end{equation*}
Let $s \leq 0$. If $t+s \leq 0$, then $ e^{\theta s}|X^\xi_{t+s}-\xi_{s}|= e^{\theta s}|\xi_{t+s}-\xi_{s}|\leq \omega_{\theta,\xi}(t)$. In the rest of the proof we assume $t+s > 0$. As $|s|=-s<t$, so that $\omega_{\theta,\xi}(|s|)\le \omega_{\theta,\xi}(t)$, the triangle inequality yields 
\begin{align}
    e^{\theta s}|X^\xi_{t+s}-\xi_{s}| \, \leq \,  e^{\theta s}|X^\xi_{t+s}-\xi_0|+ e^{\theta s}|\xi_0-\xi_s| \, \leq \, e^{\theta s}|X^\xi_{t+s}-\xi_0|+\omega_{\theta,\xi}(t). \nonumber
\end{align}
Let $r \coloneqq t+s$ and define the process $ U_r := |X_r^\xi-\xi_0|^2 $. Then $U_0 = 0$ and applying Itô's formula under the distribution $\mathbb{P}^{\beta}$, we obtain $d U_r = \nu_r \, dr + 2(X^\xi_{r}-\xi_0) \cdot dW_{r}^{\beta}$, where $\nu_{r} \coloneqq  2(X^\xi_{r}-\xi_0) \cdot (b(r, \mathcal{X}^\xi_r) + \beta_{r}) + 1 $. First, by the dissipativity condition, for any $\varepsilon>0$,
\begin{align}\label{eq:basic_inequality}
\nu_r-1
&= 2(X^\xi_r-\xi_0) \cdot \big(b(r,\mathcal{X}^{\xi}_r)-b(r,\xi)\big)
   + 2(X^\xi_r-\xi_0)\cdot \big(b(r,\xi)+\beta_r\big) \nonumber\\
&\le -\eta_1|X^\xi_r-\xi_0|^2
   + \eta_2\int_{-\infty}^0 e^{2\theta_0u}|X^\xi_{r+u}-\xi_u|^2\,du
   + 2(X^\xi_r-\xi_0)\cdot \big(b(r,\xi)+\beta_r\big) \\
&\le -(\eta_1-\varepsilon)|X^\xi_r-\xi_0|^2
   + \eta_2\int_{-\infty}^0 e^{2\theta_0u}|X^\xi_{r+u}-\xi_u|^2\,du \nonumber+ \frac{2}{\varepsilon}(K^2+B^2)\big(1+|\xi|_\theta\big)^2,
\end{align}
by the Young inequality and the linear growth condition \eqref{Lpz_condition_lip_growth_condition}. Applying Young and the triangle inequalities, there exists a positive constant $\tilde{C} = \tilde{C}(\eta_1, \eta_2, \theta_0, \theta, \varepsilon)$ such that: 
\begin{align}
    \int_{-\infty}^0 e^{2\theta_0 u}|X^\xi_{r+u}-\xi_{u}|^2 \, du &= \int_{-\infty}^{-r} e^{2\theta_0 {u}}|\xi_{r+u}-\xi_{u}|^2 \, du + \int_{-r}^0 e^{2\theta_0 u}|X^\xi_{r+u}-\xi_{u}|^2 \, du \nonumber \\
    & \leq \tilde{C} |\xi|^2_{\theta} + (1+\varepsilon) \int_{-r}^0 e^{2\theta_0 u}|X^\xi_{r+u}-\xi_0|^2 \, du \, . \label{ineq_int_X_stability_times_prop}
\end{align}
Combining the two inequalities \eqref{eq:basic_inequality} and \eqref{ineq_int_X_stability_times_prop}, we can write:
\begin{equation*}\label{eq:estimate_drift}
\nu_r \leq -(\eta_1-\varepsilon)|X^\xi_r-\xi_0|^2 + \eta_2(1+\varepsilon) \int_{-r}^0 e^{2\theta_0 s}|X^\xi_{r+s}-\xi_0|^2ds + \overline{C} (1+|\xi|^2_{\theta}) \, ,  
\end{equation*}
where $\overline{C} = \frac{4}{\varepsilon}(K^2 + B^2) + \eta_2 \tilde{C}+1$. Note that the process $V$ defined in \eqref{def_V_t} satisfies $V_r \leq \overline{C} (1 + |\xi|^2_{\theta})$ if $\gamma_1 \coloneqq  \eta_1 -\varepsilon$ and $\gamma_2 \coloneqq (1+\varepsilon) \eta_2 $. Since $\eta_1-2\theta+\frac{\eta_2}{2(\theta_0-\theta)} > 0$, we can choose an $\varepsilon>0$ small enough such that $\gamma_1\geq2\theta+\frac{\gamma_2}{2(\theta_0-\theta)}$. Applying Lemma~\ref{lem:exp_decreasing}, we have:
\begin{align}\label{ineq_final_stability_time}
    e^{2\theta r} U_r \, \leq \,  \overline{C}(1+|\xi|^2_{\theta}) \frac{e^{2\theta r}-1}{2\theta} + 2\int_0^r e^{2\theta u}(X^\xi_{u}-\xi_0) \cdot dW_{u}^{\beta}
     \, .
\end{align}
Define the stopping time $T_n \coloneqq \inf \{ u \geq 0 : |X^\xi_{u}-\xi_0|^2 \geq n \}$. Then by Fatou's Lemma:
\begin{align}
    \mathbb{E}^{\beta} |X^\xi_r-\xi_0|^2 \, \leq \,  \liminf_{n\rightarrow \infty} \, \mathbb{E}^{\beta} |X^\xi_{r \wedge T_n}-\xi_0|^2  \, \leq \, \overline{C}(1+|\xi|^2_{\theta}) \frac{1-e^{-2\theta r}}{2\theta} \, \leq \,  \overline{C}(1+|\xi|^2_{\theta}) (r\wedge\frac{1}{\theta}) \, . \nonumber
\end{align}
Finally, by the Burkhölder-Davis-Gundy inequality, 
\begin{align}
\mathbb{E}^{\beta} \sup_{r\in[0,t]} \Big|\int_0^r e^{2\theta u}(X^\xi_{u}-\xi_0) \cdot dW_{u}^{\beta} \Big| 
& \leq  \Big(\mathbb{E}^{\beta} \int_0^t e^{4\theta u} |X^\xi_{u}-\xi_0|^2 \, du \Big)^{1/2} \nonumber \\ 
&\leq \sqrt{\overline{C}(1+|\xi|^2_{\theta})} \,  \Big(\int_0^t e^{4\theta u} \Big( u \wedge\frac{1}{\theta} \Big) \, d u\Big)^{1/2} 
\nonumber \\ &
\leq \sqrt{\overline{C}(1+|\xi|^2_{\theta})} \, e^{2\theta t} \Big( t\wedge\frac{1}{\theta} \Big) \, . \label{BDG_ineq_stability_time} \nonumber
\end{align}
Thus by~\eqref{ineq_final_stability_time} and  $\sup_{r\in[0,t]} \frac{e^{2\theta (r-t)}-1}{2\theta}\leq t\wedge\frac{1}{\theta}$, we deduce the following inequality, which implies \eqref{eq:forward_time_stability}:
\begin{equation*}
    \mathbb{E}^{\beta} \sup_{r \in [0, t]} e^{2\theta(r-t)}|X^\xi_{r}-\xi_0|^2 \, \leq \,  \overline{C}(1+|\xi|^2_{\theta}) (t\wedge\frac{1}{\theta})+2\sqrt{\overline{C}(1+|\xi|^2_{\theta})} \, \Big( t\wedge\frac{1}{\theta} \Big) \, . 
\end{equation*}

\vspace{-5mm}
\qed

\section{Well-posedness of the ergodic backward SDE}\label{sec:ebsde}
Throughout this section, we work under Assumptions~\ref{dissipativity_assumption}
and~\ref{hamiltonian_assumption}. Unless otherwise stated,
$\tau\le T$ are real numbers. Under this convention, we study
the flow versions of the finite-horizon backward SDE~\eqref{BSDE_finite_horizon}
and the ergodic backward SDE~\eqref{eq:ebsde_flow_path}, and prove one of our
main results, Theorem~\ref{thm:existence_uniqueness}.

\subsection{Finite-horizon backward SDE}

 We consider the finite-horizon backward SDE with coefficient $H$ and data $(\tau, T,\xi)$:
\begin{equation}\label{BSDE_finite_horizon}
    \begin{array}{ccc}
       \displaystyle dY_t^{\tau,T,\xi}
       = -H\big(t, \mathcal{X}^{\tau,\xi}_t, Z_t^{\tau,T,\xi}\big)\,dt
       + Z_t^{\tau,T,\xi} \cdot dW_t^{\tau},
       \quad \tau\le t\le T,
       & \text{and} &
       Y^{\tau,T,\xi}_T = 0.
    \end{array}
\end{equation}

The next results focus on the continuity of $Y^{\tau, T ,\xi}$ in the initial data $\xi$ and in time.

\begin{proposition} \label{prop:Markov_rep}
    There exists a unique solution $(Y^{\tau,T,\xi}, Z^{\tau,T,\xi})$ to the backward SDE \eqref{BSDE_finite_horizon} and:
    \begin{equation*}
    \begin{array}{cc}
         \displaystyle (Y_t^{\tau, T,\xi}, Z_t^{\tau, T, \xi}) = (v^{T}, \zeta^T)(t, \mathcal{X}^{\tau, \xi}_t) \, , & \text{for all } \,  \tau\leq t\leq T \, ,
    \end{array}
    \end{equation*}
    for some measurable functions $v^{T} :(-\infty,T]\times \mathcal{C}_{\theta} \rightarrow \R$ and $\zeta^T : (-\infty,T] \times \mathcal{C}_{\theta} \rightarrow \mathbb{R}^d$. Moreover, the functions $v^T$ are Lipschitz continuous in $\xi$ and uniformly continuous in $t$ (uniformly in $T$), \textit{i.e.}, there exists a constant $C>0$, depending on $\eta_1$, $\eta_2$, $\theta_0$, $\theta$, and $L_H$, such that for all $t,t'\leq T$, and $\xi,\xi'\in \mathcal{C}_{\theta}$,
    \begin{eqnarray}
        \big|v^{T}(t,\xi)-v^{T}(t,\xi')\big|
        &\le&
        C\,|\xi-\xi'|_{\theta},
        \label{eq:backward_lpz}
    \\
        \big|v^T(t',\xi)-v^T(t,\xi)\big|
        &\le&
        C \big[\,(1+|\xi|_{\theta})\,|t'-t|^{1/2}+|t-t'|+\omega_{\theta,\xi}(|t'-t|) \big].
        \label{eq:backward_holder}
    \end{eqnarray}
\end{proposition}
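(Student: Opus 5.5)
Existence and uniqueness of $(Y^{\tau,T,\xi},Z^{\tau,T,\xi})$ follow from the standard Pardoux--Peng theory. The generator $(t,\omega,z)\mapsto H(t,\mathcal{X}^{\tau,\xi}_t,z)$ is progressively measurable, Lipschitz in $z$, and bounded at $z=0$ by Assumption~\ref{hamiltonian_assumption}, and the terminal condition is zero. For the Markov representation, set $v^T(t,\xi):=Y^{t,T,\xi}_t$, which is deterministic since $\mathcal{F}^t_t$ is trivial. The flow property $\mathcal{X}^{\tau,\xi}_s=\mathcal{X}^{t,\mathcal{X}^{\tau,\xi}_t}_s$ for $s\ge t$ comes from pathwise uniqueness in Proposition~\ref{prop:FSDE_existence}, and the independence of Brownian increments after $t$ from $\mathcal{F}^\tau_t$ then gives $Y^{\tau,T,\xi}_t=v^T(t,\mathcal{X}^{\tau,\xi}_t)$ by the usual freezing argument. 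This is routine once $v^T$ is known to be continuous in $\xi$, which is provided by \eqref{eq:backward_lpz} below, so I prove that first. Measurability of $\zeta^T$ is obtained as in El Karoui--Peng--Quenez, through $\zeta^T=\limsup_n n\,\mathbb{E}[\int_t^{t+1/n}Z\,ds\,|\,\cdot]$-type constructions, or via a measurable version of the martingale representation on the Polish space $\mathcal{C}_\theta$.

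For \eqref{eq:backward_lpz}, linearize the difference. Set $\delta Y=Y^{t,T,\xi}-Y^{t,T,\xi'}$ and $\delta Z$ analogously, and write $H(s,\mathcal{X}^\xi_s,Z^\xi_s)-H(s,\mathcal{X}^{\xi'}_s,Z^{\xi'}_s)=h_s+\beta_s\cdot\delta Z_s$. Here $|h_s|\le L_H|\mathcal{X}^\xi_s-\mathcal{X}^{\xi'}_s|_\theta$ and $\beta$ is progressive with $|\beta|\le L_H$. Girsanov with $\mathbb{P}^\beta$ then gives $\delta Y_t=\mathbb{E}^\beta\int_t^T h_s\,ds$. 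Since the bound \eqref{stability_prop_ineq} of Proposition~\ref{prop:forward_lip} holds $\mathbb{P}$-a.s., it also holds $\mathbb{P}^\beta$-a.s., so $|\delta Y_t|\le L_H C|\xi-\xi'|_\theta\int_t^\infty e^{-\theta(s-t)}ds$. This yields a constant independent of $T$ and $t$. The decay in Proposition~\ref{prop:forward_lip} is exactly what makes the constant uniform in $T$, which is the point of the estimate.

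For \eqref{eq:backward_holder}, take $t<t'\le T$ and use the representation along the trajectory started at $(t,\xi)$:
\[
v^T(t,\xi)=\mathbb{E}\Big[v^T(t',\mathcal{X}^{t,\xi}_{t'})+\int_t^{t'}H(s,\mathcal{X}^{t,\xi}_s,Z_s)\,ds\Big].
\]
Rewriting with $\beta_s$ chosen so that $H(s,\mathcal{X}_s,Z_s)-H(s,\mathcal{X}_s,0)=\beta_s\cdot Z_s$ and $|\beta|\le L_H$, the stochastic integral becomes a $\mathbb{P}^\beta$-martingale, and
\[
v^T(t,\xi)=\mathbb{E}^\beta\Big[v^T(t',\mathcal{X}^{t,\xi}_{t'})+\int_t^{t'}H(s,\mathcal{X}_s,0)\,ds\Big].
\]
The integral term is bounded by $L_H|t'-t|$. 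Then $|v^T(t,\xi)-v^T(t',\xi)|\le L_H|t'-t|+C\,\mathbb{E}^\beta|\mathcal{X}^{t,\xi}_{t'}-\xi|_\theta$ by \eqref{eq:backward_lpz}. Proposition~\ref{prop:forward_holder} with $B=L_H$, combined with Jensen, bounds the last expectation by $C[(1+|\xi|_\theta)|t'-t|^{1/2}+\omega_{\theta,\xi}(|t'-t|)]$, which is \eqref{eq:backward_holder}.

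The main obstacle is the first step: justifying the Markov representation rigorously in the infinite-dimensional, path-dependent state space $\mathcal{C}_\theta$, in particular the measurability of $v^T$ and $\zeta^T$. I expect to handle it by combining the flow property, the continuity of $\xi\mapsto v^T(t,\xi)$ (which reduces $v^T$ to a countable dense approximation), and joint measurability in $t$, which follows from the time estimate just proved.
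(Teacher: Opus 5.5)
Your proposal is correct and follows the paper's proof essentially step for step. You use standard BSDE well-posedness and the Markov representation, which the paper simply cites from Zhang's book while you sketch the freezing and measurability argument in more detail. You then linearize with a bounded $\beta$, apply Girsanov and Proposition~\ref{prop:forward_lip} to get the uniform-in-$T$ Lipschitz bound, and use the same $\beta$-trick to remove $Z$ from $H$, combined with that Lipschitz bound and Proposition~\ref{prop:forward_holder} with $B=L_H$, to get the time-regularity estimate.
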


\begin{proof} Throughout the proof, $C$ denotes a positive constant which may change from line to line.
    The well-posedness of \eqref{BSDE_finite_horizon} is standard, see \textit{e.g.} the book of \citeauthor{zhang_BSDE} \cite[Theorem~4.3.1]{zhang_BSDE}. The Markov representation is a direct extension of \cite[Theorem~5.1.3]{zhang_BSDE} and follows from the fact that $(\mathcal{X}^{\tau, \xi}_t)_{t \geq \tau}$ is a Markov process (see Proposition \ref{prop:FSDE_existence}). We start by showing the Lipschitz continuity \eqref{eq:backward_lpz}. First, we can write by Girsanov theorem:
\begin{align}
    v^T(t,\xi) - v^T(t,\xi')
    = \int_t^T \Big\{ H\big(s, \mathcal{X}^{t,\xi}_s, Z_s^{t,T,\xi}\big) - H\big(s, \mathcal{X}^{t,\xi'}_s, Z_s^{t,T,\xi}\big) \Big\}\,ds \nonumber - \int_t^T \big(Z_s^{t,T,\xi} - Z_s^{t,T,\xi'}\big)\cdot dW_s^{t, \beta} \, ,
\end{align}
where
$
\beta_s := \frac{H(s, \mathcal{X}^{t,\xi'}_s, Z_s^{t,T,\xi}) - H(s, \mathcal{X}^{t,\xi'}_s, Z_s^{t,T,\xi'})}{|Z_s^{t,T,\xi}-Z_s^{t,T,\xi'}|^2}
\big(Z_s^{t,T,\xi}-Z_s^{t,T,\xi'}\big)\mathbf{1}_{\{Z_s^{t,T,\xi} \neq Z_s^{t,T,\xi'}\}}
$
is a bounded process by Assumption \ref{hamiltonian_assumption}. By Lemma \ref{lem:girsanov_intZ_is_mg}, the triangle inequality and Proposition~\ref{prop:forward_lip}, we obtain:
\begin{align}
    \big|v^T(t,\xi) - v^T(t, \xi') \big| &\leq \mathbb{E}^{\beta} \int_t^T \big|  H \big(s, \mathcal{X}^{t, \xi}_s, Z_s^{t, T,\xi}\big) - H\big(s, \mathcal{X}^{t, \xi'}_s,Z_s^{t, T,\xi}\big) \big| \, ds \nonumber \\
    & \leq L_H \, \mathbb{E}^{\beta} \int_t^T \big| \mathcal{X}^{t, \xi}_s - \mathcal{X}^{t, \xi'}_s \big|_{\theta} \, ds \, \leq \,  \frac{L_H C}{\theta} | \xi - \xi'|_{\theta} \, . \nonumber
\end{align} We now prove the uniform continuity in time \eqref{eq:backward_holder}. For $t \leq t'\leq T$, we can decompose:
\begin{align}
    v^T(t', \xi) - v^T(t, \xi) - \big( v^T(t', \xi) - v^T(t', \mathcal{X}^{t, \xi}_{t'}) \big) &= Y_{t'}^{t, T, \xi} - Y_{t}^{t, T, \xi}  \nonumber \\
    &= - \int_t^{t'} H \big(s, \mathcal{X}^{t, \xi}_s, Z_s^{t, T, \xi} \big) \, ds + \int_t^{t'} Z_s^{t, T, \xi} \, dW^{t}_s \nonumber \\
    &= - \int_t^{t'}  H\big(s, \mathcal{X}^{t,\xi}_s, 0 \big) \, ds + \int_t^{t'} Z_s^{t,T, \xi} \cdot dW^{t, \beta}_s \nonumber \, ,
\end{align}
where $\beta_s := \frac{H(s, \mathcal{X}^{t, \xi}_s,Z_s^{t, T,\xi})-H(s, \mathcal{X}^{t, \xi}_s,0)}{|Z^{t, T,\xi}_s |^2} Z^{t, T,\xi}_s \mathbf{1}_{\{Z^{t, T,\xi}_s \neq 0\}}$ is bounded by Assumption \ref{hamiltonian_assumption}. By Lemma \ref{lem:girsanov_intZ_is_mg} and Assumption~\ref{hamiltonian_assumption}, we obtain the inequality $\mathbb{E}^{t,\beta} \big|Y_{t'}^{t, T, \xi} - Y_t^{t, T, \xi} \big| \leq L_H|t'-t| \, $. Moreover, by \eqref{eq:backward_lpz} and Proposition~\ref{prop:forward_holder}, we have:
\begin{align}
    \big|\E ^{t, \beta} \big[v^T(t',\xi) - v^T(t', \X^{t,\xi}_{t'}) \big] \big| \, \leq \,  C \, \E^{t,\beta} \big|\X^{t,\xi}_{t'}-\xi \big|_{\theta} \, \leq \, C \big( (1+|\xi|_{\theta})\sqrt{t'-t}+\omega_{\theta,\xi}(|t'-t|) \big) \nonumber \, .
\end{align}
By the triangle inequality, we obtain:
\begin{equation*}
    |v^T(t',\xi) - v^T(t, \xi)|\leq C \big((1+|\xi|_{\theta})\sqrt{t'-t}+\omega_{\theta,\xi}(|t'-t|) \big)+ L_H|t'-t|\,  \, . 
\end{equation*}

\vspace{-20pt}

\end{proof}

\begin{remark}
    This argument for the Lipschitz continuity \eqref{eq:backward_lpz} is very similar to the one of \citeauthor{furhman_ergodic} \cite[Lemma~4.3]{furhman_ergodic}. In case the problem is time-homogeneous, \textit{i.e.} $b$ and $H$ do not depend on $t$, the delay does not affect much the proof of existence of solutions to the ergodic backward SDE \eqref{EBackwards SDE}. Therefore, introducing time-inhomogeneity is the true novelty of this paper.
\end{remark}

\subsection{Proofs of existence and uniqueness}

Let $\mathbf{0}$ be the constant zero function in $\mathcal{C}_{\theta}$. In this section, we lay the final stage towards the proof of existence for the ergodic backward SDE \eqref{EBackwards SDE}, which relies on a compactness argument, similarly to \citeauthor{Arisawa_Lions} \cite{Arisawa_Lions} and~\citeauthor{furhman_ergodic}~\cite{furhman_ergodic}. 

\begin{proposition}\label{prop:limit_decomposition}
    There exists a sequence $T_n \rightarrow \infty$ and a  function $\overline{v} :\R\times \mathcal{C}_{\theta} \rightarrow \mathbb{R}$, such that:
    \begin{equation}\label{eq:convergence_finite_infinite}
    \begin{array}{cc}
         \displaystyle  v^{T_n}(t, \xi) - v^{T_n}(0, \textnormal{\textbf{0}}) \xrightarrow[T_n \rightarrow \infty]{} \overline{v}(t, \xi) \,, & \text{for all } \, t\in\R \, \text{ and } \,  \xi \in \mathcal{C}_{\theta} \, .
    \end{array}
    \end{equation}
    Moreover, there exists a constant $C > 0$ such that, for all $t, t'\in\R$ and $\xi, \xi' \in \mathcal{C}_{\theta}$,
    \begin{equation}\label{vbar_continuity_estimates}
            \begin{array}{l}
                 \displaystyle \big|\overline{v}(t, \xi) - \overline{v}(t, \xi') \big| \leq C | \xi - \xi' |_{\theta} \, , \\[5pt]
                 \displaystyle \big|\overline v(t',\xi)-\overline v(t,\xi)\big|\le C \big(\,(1+|\xi|_{\theta})\,|t'-t|^{1/2}+|t-t'|+\omega_{\theta,\xi}(|t'-t|) \big) \, .
            \end{array}
    \end{equation}
\end{proposition}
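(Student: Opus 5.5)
The plan is to run a diagonal Arzelà–Ascoli type argument on the family $w^T(t,\xi) := v^{T}(t,\xi) - v^{T}(0,\mathbf{0})$, with $v^T$ extended to $t > T$ by setting $v^T(t,\xi)=0$ there (or simply considering only $T > |t|$). The first step is to show that, for each fixed $(t,\xi)$, the family $\{w^T(t,\xi)\}_{T > |t|}$ is bounded. By \eqref{eq:backward_lpz}, $|w^T(t,\xi) - w^T(t,\mathbf{0})| \le C|\xi|_\theta$. By \eqref{eq:backward_holder} applied at $\xi = \mathbf{0}$, where $\omega_{\theta,\mathbf{0}}\equiv 0$ and $|\mathbf{0}|_\theta = 0$, we get $|w^T(t,\mathbf{0})| = |v^T(t,\mathbf{0})-v^T(0,\mathbf{0})| \le C(|t|^{1/2}+|t|)$. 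Both constants are uniform in $T$. Hence
\[
|w^T(t,\xi)| \le C\big(|\xi|_\theta + |t|^{1/2} + |t|\big) \quad \text{for all } T>|t| .
\]
Moreover, $w^T$ inherits exactly the estimates \eqref{eq:backward_lpz}--\eqref{eq:backward_holder}, since subtracting a constant does not affect differences.

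Next, the construction of the subsequence. Since $\mathcal{C}_\theta$ is Polish, it has a countable dense subset $D$. Let $Q := \mathbb{Q}\times D$, which is countable. For each $(t,\xi)\in Q$, the sequence $w^{T}(t,\xi)$ (with $T\in\mathbb{N}$, $T>|t|$) is bounded. A Cantor diagonal extraction then produces $T_n\to\infty$ such that $w^{T_n}(q)$ converges for every $q\in Q$; we call the limit $\overline v(q)$.

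The last step extends the convergence to all $(t,\xi)\in\R\times\mathcal{C}_\theta$. Fix $(t,\xi)$ and $\varepsilon>0$. Pick $\xi'\in D$ with $|\xi-\xi'|_\theta\le\varepsilon$, and rational $t'$ with $|t-t'|$ small. Then, uniformly in $n$,
\[
|w^{T_n}(t,\xi)-w^{T_n}(t',\xi')| \le C\varepsilon + C\big[(1+|\xi'|_\theta)|t-t'|^{1/2}+|t-t'|+\omega_{\theta,\xi'}(|t-t'|)\big].
\]
The delicate point, and the step I expect to be the main obstacle, is that the modulus $\omega_{\theta,\xi'}$ depends on $\xi'$, which itself varies with $\varepsilon$. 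To avoid this, I would apply the Lipschitz estimate in $\xi$ at the fixed time $t'$ and the time estimate at the fixed path $\xi$:
\[
|w(t,\xi)-w(t',\xi')| \le |w(t,\xi)-w(t',\xi)| + |w(t',\xi)-w(t',\xi')| .
\]
This way only $\omega_{\theta,\xi}$, for the fixed $\xi$, appears, and it tends to $0$ by Lemma~\ref{lem:property_mc}. Consequently $(w^{T_n}(t,\xi))_n$ is Cauchy: the oscillation is bounded by $2\times(\text{an error that is small uniformly in } n)$ plus the oscillation of the convergent sequence $w^{T_n}(t',\xi')$. Its limit defines $\overline v(t,\xi)$, consistent with the values on $Q$. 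Finally, the bounds in \eqref{vbar_continuity_estimates} follow by passing to the limit $n\to\infty$ in the uniform estimates \eqref{eq:backward_lpz}--\eqref{eq:backward_holder} satisfied by $w^{T_n}$.
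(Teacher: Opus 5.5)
Your proof is correct and follows essentially the paper's argument. The paper first applies Arzel\`a--Ascoli in $t$ for each $\xi_i$ in a countable dense set $D$, then extracts a diagonal subsequence over $D$, and reaches arbitrary $\xi$ through the $T$-uniform Lipschitz bound \eqref{eq:backward_lpz} at the same time $t$; you instead unroll Arzel\`a--Ascoli by diagonalizing directly over $\mathbb{Q}\times D$ and using the time estimate \eqref{eq:backward_holder} at the fixed path $\xi$. Your explicit bound $|w^T(t,\xi)|\le C(|\xi|_\theta+|t|^{1/2}+|t|)$ usefully spells out the \emph{uniformly bounded} claim the paper leaves implicit, and your worry about $\omega_{\theta,\xi'}$ is harmless, since you could equally fix $\xi'\in D$ first and then choose the rational $t'$ depending on it.
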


\begin{proof} We define $u^T(t,\xi)\coloneqq v^T(t,\xi)-v^T(0,\textbf{0})$. Due to the dependence on $\xi$ of the time regularity estimate~\eqref{eq:backward_holder}, which is not uniform on bounded sets of $\C_\theta$, we proceed in two steps. \\

\vspace{-8pt}

\noindent \underline{Step 1:} Let $\xi \in \mathcal{C}_{\theta}$ and $(T_n)_{n \in\mathbb{N}}$ be a sequence converging to $\infty$. We prove that there is a subsequence $T_{n_k}\rightarrow\infty$, such that $u^{T_{n_k}}(\cdot,\xi)$ converges pointwise on $\R$. By Proposition \ref{prop:Markov_rep},  the family $\{u^{T_n}(\cdot,\xi)\}_{n \in\mathbb{N}}$ is uniformly bounded and equicontinuous on every compact subset of $\R$. Applying Arzelà–Ascoli Theorem, we obtain a subsequence $T_{n_k}\rightarrow\infty$ such that $u^{T_{n_k}}(\cdot,\xi)$ converges pointwise in $\R$.  \\

\vspace{-8pt}

\noindent \underline{Step 2:} Recall that the space $\mathcal{C}_{\theta}$ is separable, so there exists a countable subset $D = (\xi_i)_{i\in\mathbb{N}}$ dense in $\mathcal{C}_{\theta}$. By Step 1 and a diagonal argument, we can extract a sequence $T_n \rightarrow \infty$ such that $u^{T_n}(\cdot,\xi_i)$ converges pointwise in $\R$ for all $i\in\mathbb{N}$. Let $\varepsilon > 0$ and $(t,\xi) \in\R \times \C_{\theta}$. We can choose $\xi_i \in D$ such that $|\xi - \xi_i|_{\theta} < \varepsilon$. By the Lipschitz continuity of $u^T$ following \eqref{eq:backward_lpz}, and the triangle inequality, we have for all $n, m \geq 0$,
    \begin{align}
        |u^{T_m}(t,\xi)-u^{T_n}(t,\xi)| &\leq |u^{T_m}(t,\xi)-u^{T_m}(t,\xi_i)|+ |u^{T_m}(t,\xi_i)-u^{T_n}(t,\xi_i)|+|u^{T_n}(t,\xi_i)-u^{T_n}(t,\xi)| \nonumber \\
        &\leq 2C\varepsilon + |u^{T_m}(t,\xi_i)-u^{T_n}(t,\xi_i)| \, . \nonumber
    \end{align}
    Then there exists $n_0 \geq 0$ such that for all $n, m \geq n_0$, $|u^{T_m}(t,\xi)-u^{T_n}(t,\xi)|\leq (2C + 1) \varepsilon$. It follows that $u^{T_n}(t,\xi)$ is a Cauchy sequence and hence converges. Then there exists a  function $\bar{v}$ satisfying
\eqref{eq:convergence_finite_infinite}. The estimates of \eqref{vbar_continuity_estimates} are inherited from the continuity \eqref{eq:backward_lpz} and \eqref{eq:backward_holder} of $v^{T_n}(t,\xi)$  which is uniform in $T_n$. 
\end{proof}

%\vspace{5pt}

Next, we prove Theorem~\ref{thm:existence_uniqueness}, following the idea of
\citeauthor{furhman_ergodic}~\cite{furhman_ergodic}.
\begin{proof}[Proof of Theorem \ref{thm:existence_uniqueness}] In this proof, we set $\tau=0$ and omit the dependence on $\tau$ in the notation. Throughout the proof, $C$ denotes a positive constant, which may change from one occurrence
to another, and depends only on $\eta_1,\eta_2,\theta_0,\theta$ and $L_H$.  We proceed in three steps. \\

\vspace{-8pt}

\noindent \underline{Step 1:} We start by constructing the triple $(Y, Z, \Lambda)$. Let $\overline{v}$ be the function given in Proposition~\ref{prop:limit_decomposition}. We define $v(t, \xi) \coloneqq \overline{v}(t, \xi) - \overline{v}(t, \mathbf{0})$, then a function $\Lambda :\R_+ \rightarrow \mathbb{R}$ and a process $Y^{\xi} : \Omega \times\R_+ \rightarrow \mathbb{R}$ as:
\begin{equation}\label{Lambda_Y_def}
    \begin{array}{ccc}
         \Lambda(t) \coloneqq -\overline{v}(t, \textbf{0}) & \text{and} & Y_t^{\xi} \coloneqq v(t, \mathcal{X}^{\xi}_t) \, .
    \end{array}
\end{equation}
Note that $\Lambda$ is uniformly continuous by~\eqref{vbar_continuity_estimates} and the fact that $\omega_{\theta, \textbf{0}} \equiv 0$. Moreover, $Y^\xi\in\mathbb S^2_T(\mathbb R)$ for every $T\in\mathbb R_+$, by~\eqref{vbar_continuity_estimates} and Proposition~\ref{prop:forward_holder}. Let $T_n$ be the sequence defined in Proposition~\ref{prop:limit_decomposition}. In the following we exhibit a process $Z^\xi$ such that $Z^{T_n,\xi} \rightarrow Z^{\xi}$ in $\mathbb{H}^2_T(\R^d)$ for all $T > 0$. Let $T > 0$ and $T_n,T_m \geq T$. Define the processes $Y^{n,m}_s \coloneqq (Y_s^{T_n,\xi}-v^{T_n}(0,\mathbf{0}))-(Y_s^{T_m,\xi}-v^{T_m}(0,\mathbf{0}))$ and $Z^{n,m}_s \coloneqq Z^{T_n,\xi}_s - Z^{T_m,\xi}_s.$
 Applying Itô’s formula to $(Y^{n,m})^2$, we can write:
 \begin{align}
     (Y^{n,m}_0)^2 + \mathbb{E} \int_0^T \big|Z^{n,m}_s \big|^2 ds &= \mathbb{E}(Y^{n,m}_T)^2  + 2 \, \mathbb{E}\int_0^T  Y^{n,m}_s \Big[H(s, \mathcal{X}^\xi_s,Z^{T_n,\xi}_s)-H(s, \mathcal{X}^\xi_s,Z^{T_m,\xi}_s) \Big] ds  \nonumber \\
     & \leq \mathbb{E}(Y^{n,m}_T)^2 + 2 L_H \, \mathbb{E} \int_0^T Y^{n,m}_s |Z_s^{n,m}| \, ds \, . \nonumber
 \end{align}
By Young's inequality and Assumption \ref{hamiltonian_assumption}, we obtain:
\begin{equation} \label{eq:z_n,m_boundedness}
    \mathbb{E} \int_0^T \big|Z^{n,m}_s \big|^2 ds \leq C \Big( \mathbb{E}(Y^{n,m}_T)^2 + \mathbb{E}\int_0^T  \big|Y^{n,m}_s \big|^2 ds \Big).
\end{equation}
By the triangle inequality and Proposition~\ref{prop:Markov_rep}, 
\begin{align}
    |Y^{T,\xi}_s - v^T(0, \textbf{0})| &\leq |v^T(s, \mathcal{X}^{\xi}_s)-v^T(s, \textbf{0})|+|v^T(s, \textbf{0})-v^T(0, \textbf{0})| \nonumber 
    %\\[5pt] &
    \leq C(|\mathcal{X}^\xi_s|_{\theta}+\sqrt{s}+s) \, .\nonumber
\end{align}
By Proposition \ref{prop:forward_holder}, we have $\mathbb{E} \sup_{t \in [0, T]} |\mathcal{X}^\xi_t|_{\theta}^2 < \infty$. Together with the Lipschitz continuity of \eqref{vbar_continuity_estimates}, this implies that $\mathbb{E} \sup_{t \in [0, T]} \big[ \sup_{n,m} |Y^{n,m}_t|^2 \big] < \infty$. Therefore, as $\lim_{n,m\to\infty} Y^{n,m}_s=0$, it follows from the Dominated Convergence Theorem that:
\begin{equation*}
    \begin{array}{ccc}
        \displaystyle \mathbb{E} \int_0^T \big| Y^{n,m}_s \big|^2 ds \rightarrow 0 & \text{and} & \mathbb{E} \big| Y^{n,m}_T \big|^2 \rightarrow 0 \, ,
    \end{array}
\end{equation*}
as $n ,m \rightarrow \infty$. We deduce from \eqref{eq:z_n,m_boundedness} that $(Z^{T_n, \xi})$ is a Cauchy sequence in $\mathbb{H}^2_T(\R^d)$, so it converges to a process $Z^{\xi}$. This holds for every $T\in\R_+$, and thus $Z^\xi\in \mathbb{H}^0(\R^d)$ and $Z^\xi\in \HH_T(\R^d)$ for all $T\geq 0$. \\

\vspace{-8pt}

\noindent\underline{Step 2:}
We next verify that $(Y, Z, \Lambda)\in \mathbf{E}(\C_{\theta}) \times \rm{UC}$ solves the ergodic backward SDE. Let $0 \leq t \leq T < \infty$ and note that $(Y^{\xi}, Z^{\xi}) \in \mathbb{S}^2_T(\R) \times \mathbb{H}^2_T(\R^d)$ and $\Lambda$ is a continuous function. Then for $T_n > T$, 
\begin{equation*}
    Y^{T_n, \xi}_t - v^{T_n}(0, \textbf{0}) = Y^{T_n, \xi}_T - v^{T_n}(0, \textbf{0}) + \int_t^T H\big(s, \mathcal{X}^{\xi}_s, Z^{T_n, \xi} \big) \, ds - \int_t^T Z^{T_n, \xi} \cdot dW_s \, .
\end{equation*}
By Proposition~\ref{prop:limit_decomposition}, $Y^{T_n, \xi}_s - v^{T_n}(0, \textbf{0}) \rightarrow Y^\xi_s-\Lambda(s)$ for all $s \geq 0$. Moreover, we have:
\begin{equation*}
    \Big|\int_t^T H\big(s, \mathcal{X}^{\xi}_s,Z_s^{T_n, \xi}\big)\,ds-\int_t^T H\big(s, \mathcal{X}^{\xi}_s,Z_s^{\xi}\big)\,ds\Big| \leq L_H \int_t^T |Z_s^{T_n, \xi}-Z_s^{\xi} |\, ds \longrightarrow 0, ~\mbox{as}~n\to\infty,
\end{equation*}
by Step 1. The estimate \eqref{Y_decompsition} is a consequence of the definition \eqref{Lambda_Y_def} of $\Lambda$ and $Y^{\xi}$ and the fact that $\overline{v}$ is Lipschitz continuous in the second variable. Finally, by Lemma~\ref{lem:mollification} and Remark~\ref{rmk:smooth_repre}, $\Lambda$ can be chosen in $C^\infty$, with bounded derivatives of all orders, and such that $\Lambda(0)=0$. \\

\vspace{-8pt}

\noindent\underline{Step 3:} Let $(\overline{Y}, \overline{Z}, \overline{\Lambda}) \in \mathbf{E}(\C_\theta) \times {\rm{UC}}$ be another solution of the ergodic backward SDE and let us prove that $\Lambda - \overline{\Lambda}$ is bounded. Since $\Lambda$ and $\overline{\Lambda}$ do not depend on the initial data $\xi \in \mathcal{C}_{\theta}$, we fix $\xi=\textbf{0}$ and remove the upper index in our notations. For $T\geq 0$, one has:
\begin{align}
    Y_0 - \overline{Y}_0-(\Lambda(0)-\overline{\Lambda}(0)) = \big(Y_T - \overline{Y}_T \big) - \big(\Lambda(T) - \overline{\Lambda}(T) \big) - \int_0^T \big( Z_s - \overline{Z}_s \big) \cdot dW^\beta_s  \, , \nonumber
\end{align}
where $\beta_s := \frac{H(s, \mathcal{X}_s,Z_s)-H(s, \mathcal{X}_s,\overline{Z}_s)}
{|Z_s-\overline{Z}_s|^2}\,
(Z_s-\overline{Z}_s) \mathbf{1}_{\{Z_s \neq \overline{Z}_s\}}$ is bounded by Assumption~\ref{hamiltonian_assumption}. We obtain by Lemma~\ref{lem:girsanov_intZ_is_mg} and the growth condition \eqref{Y_decompsition}:
\[
|\Lambda(T)-\overline\Lambda(T)| \, = \, \big| \mathbb{E}^\beta\!\big[\,Y_T -\overline{Y}_T\,\big] - \big(Y_0 - \overline Y_0 \big)-(\Lambda(0)-\overline{\Lambda}(0)) \big| \, \leq \,   | Y_0 - \overline Y_0 |+|\Lambda(0)-\overline{\Lambda}(0)| + 2C \, ,
\]
where we used that $\E^\beta|\mathcal{X}_T|_{\theta}^2 \leq C$ for some $C > 0$ by Proposition \ref{prop:FSDE_existence}. We obtain the estimate \eqref{eq:Lambda_unique_up_to_constant}. 
\end{proof}

\vspace{5pt}

\begin{remark}\label{rmk:ebsde_flow_path}
By the same argument as in the proof of Theorem~\ref{thm:existence_uniqueness}, we may consider the case of an arbitrary time origin $\tau$, and we may conclude that there exist continuous functions $v:\R\times\C_\theta\mapsto\R$, $\Lambda:\R\mapsto\R$, and  
$(Y^\tau, Z^\tau) \in \mathbf{E}_\tau(\C_{\theta})$ for every $\tau\in\R$ such that $Y^{\tau,\xi}_t = v(t, \mathcal{X}^{\tau,\xi}_t)$ for all $t\geq \tau,\xi\in  \mathcal{C}_{\theta}$, $\Lambda\in \rm{UC}_\tau$, $\Lambda(0)=0$, $Z^{\tau,T_n,\xi}\rightarrow Z^{\tau,\xi}$ in $\HH_{\tau,T}(\R^d)$ for $\tau\leq T$, and

\begin{equation}\label{eq:ebsde_flow_path}
     \displaystyle Y_t^{\tau,\xi} - \Lambda(t)
    =
    Y_T^{\tau,\xi} - \Lambda(T)
    + \int_t^T H\big(s,\X_s^{\tau,\xi},Z_s^{\tau,\xi}\big)\,ds
    - \int_t^T Z_s^{\tau,\xi}\cdot dW^{\tau}_s \, ,  \tau \leq t \leq T\, .
\end{equation}
\end{remark}

\section{Stability of the ergodic values and costs}\label{sec:stability}

In this section we prove Theorems~\ref{thm:stability_values} and~\ref{thm:stability_control} under Assumptions~\ref{dissipativity_assumption},
\ref{hamiltonian_assumption}, and~\ref{control_assumption} for $b,H,f$ and $\tilde{b},\tilde{H},\tilde{f}$. For simplicity we set the time origin to be $\tau = 0$ and we omit it in the notations. We denote by $X^\xi$ the unique solution of \eqref{Functional_SDE} and by $(\mathcal X_t^\xi)_{t\geq 0}$ the associated path process, defined by $\mathcal X_t^\xi(s)=X_{t+s}^\xi$ for $s\leq 0$. Similarly we define $\tilde X^\xi$ and $\tilde{\mathcal X}^\xi$ for the drift $\tilde b$. We consider measurable functions $\delta b$, $\delta H$, and $\delta f$ satisfying
\eqref{eq:delta_bH} and~\eqref{eq:delta_f}. We start with the forward equation.

\begin{proposition}[Stability in $b$]\label{prop:stability_b}
There exists a constant $C>0$ depending only on $\eta_1,\eta_2,\theta_0,\theta$ such that, for any $\xi\in\mathcal C_{\theta}^0$ and $K\in\K,$
\begin{equation*}
\begin{array}{cc}
     \displaystyle \int_0^{\infty} K(s)|\mathcal{X}^{\xi}_s- \tilde{\mathcal{X}}^{\xi}_s|_{\theta} \, ds\leq \Big(\int_0^{\infty} K(s)|\mathcal{X}^{\xi}_s- \tilde{\mathcal{X}}^{\xi}_s|_{\theta} \, ds\Big)^{\frac{1}{2}}\leq  C  \,  \Big(\int_0^{\infty} K(s)(\delta b(s))^2 \,ds\Big)^{\frac{1}{2}} \, , & \mathbb{P}-\text{a.s.}
\end{array}
\end{equation*}
\end{proposition}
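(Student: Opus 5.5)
The plan is to prove the second (main) inequality in the stronger form
\[
\int_0^\infty K(s)\,|\mathcal{X}^{\xi}_s-\tilde{\mathcal{X}}^{\xi}_s|_\theta^2\,ds\;\le\;C^2\int_0^\infty K(s)(\delta b(s))^2\,ds ,
\]
and then get the first inequality from Jensen's inequality for the probability measure $K(s)\,ds$ (recall $\int_0^\infty K=1$). I read the middle term of the statement as $\big(\int_0^\infty K(s)|\mathcal{X}^{\xi}_s-\tilde{\mathcal{X}}^{\xi}_s|_\theta^2ds\big)^{1/2}$; that is the quantity Jensen bounds below by $\int_0^\infty K(s)|\mathcal{X}^{\xi}_s-\tilde{\mathcal{X}}^{\xi}_s|_\theta\,ds$. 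The key observation is that $X^\xi$ and $\tilde X^\xi$ are driven by the same Brownian motion with unit diffusion. Hence $D_t:=X^\xi_t-\tilde X^\xi_t$ is of bounded variation, $D\equiv 0$ on $(-\infty,0]$, and every estimate below holds pathwise, $\mathbb P$-a.s.

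\emph{Step 1 (differential inequality).} Set $U_t:=|D_t|^2$. Then $dU_t=\nu_t\,dt$ with $\nu_t=2D_t\cdot\big(b(t,\mathcal X^\xi_t)-\tilde b(t,\tilde{\mathcal X}^\xi_t)\big)$. Split this as
\[
2D_t\cdot\big(b(t,\mathcal X^\xi_t)-b(t,\tilde{\mathcal X}^\xi_t)\big)+2D_t\cdot\big(b(t,\tilde{\mathcal X}^\xi_t)-\tilde b(t,\tilde{\mathcal X}^\xi_t)\big).
\]
The first term is handled by Assumption~\eqref{dissipativity_condition}, since $(\mathcal X^\xi_t-\tilde{\mathcal X}^\xi_t)(0)=D_t$. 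The second is bounded by Young's inequality and \eqref{eq:delta_bH}, giving $\varepsilon U_t+\varepsilon^{-1}\delta b(t)^2$. Altogether,
\[
\nu_t\le-(\eta_1-\varepsilon)U_t+\eta_2\int_{-\infty}^0e^{2\theta_0 s}U_{t+s}\,ds+\varepsilon^{-1}\delta b(t)^2 .
\]

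\emph{Step 2 (apply Lemma~\ref{lem:exp_decreasing}).} Choose $\varepsilon>0$ small so that $\gamma_1:=\eta_1-\varepsilon\ge2\theta+\frac{\gamma_2}{2(\theta_0-\theta)}$ with $\gamma_2:=\eta_2$; this is possible by the strict inequality in Assumption~\eqref{dissipativity_condition}. Then $V_t\le\varepsilon^{-1}\delta b(t)^2$ and $\sigma\equiv0$. Since $U\equiv0$ on $(-\infty,0]$, the lemma yields
\[
e^{2\theta t}U_t\le\varepsilon^{-1}\int_0^te^{2\theta s}\delta b(s)^2\,ds .
\]
The right-hand side is nondecreasing in $t$. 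Because $U_r=0$ for $r\le0$, this gives
\[
|\mathcal X^\xi_t-\tilde{\mathcal X}^\xi_t|_\theta^2=\sup_{r\le t}e^{2\theta(r-t)}U_r\le\varepsilon^{-1}\int_0^te^{-2\theta(t-s)}\delta b(s)^2\,ds .
\]

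\emph{Step 3 (kernel averaging).} Integrate against $K$ and apply Tonelli:
\[
\int_0^\infty K(t)\int_0^te^{-2\theta(t-s)}\delta b(s)^2\,ds\,dt=\int_0^\infty\delta b(s)^2\int_s^\infty K(t)e^{-2\theta(t-s)}\,dt\,ds\le\frac1{2\theta}\int_0^\infty K(s)\,\delta b(s)^2\,ds .
\]
The last step uses that $K$ is non-increasing, so $K(t)\le K(s)$ for $t\ge s$. This gives the main inequality with $C^2=(2\theta\varepsilon)^{-1}$, a constant depending only on $\eta_1,\eta_2,\theta_0,\theta$.

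The only delicate points are choosing $\varepsilon$ compatibly with the constraint in Lemma~\ref{lem:exp_decreasing}, and the monotonicity argument used to pass from the pointwise bound on $U_t$ to the $|\cdot|_\theta$ norm. In particular, no expectation or stochastic-integral control is needed, because the noise cancels in $D_t$.
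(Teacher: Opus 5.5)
Your proposal is correct and follows essentially the same route as the paper. The noise cancels in $X^\xi-\tilde X^\xi$; Young's inequality plus dissipativity feed into Lemma~\ref{lem:exp_decreasing} with $U\equiv 0$ on $(-\infty,0]$; Tonelli with the monotonicity of $K$ gives the kernel bound; and Cauchy--Schwarz (your Jensen step) gives the first inequality.

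The only cosmetic difference is that the paper takes the largest admissible Young parameter, $\varepsilon=\eta_0:=\eta_1-2\theta-\frac{\eta_2}{2(\theta_0-\theta)}$. This makes $\gamma_1$ equal to the lemma's threshold exactly, giving $C^2=(2\eta_0\theta)^{-1}$. Your reading of the middle term with $|\cdot|_\theta^2$ is the one the paper's proof actually establishes.
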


\proof
We denote $\delta \hspace{-1pt} X^\xi_s = X^\xi_s-\tilde{X}^\xi_s$ for $s\in\R$. Let $\eta_0 \coloneqq \eta_1 - 2 \theta - \frac{\eta_2}{2(\theta_0 - \theta)} > 0$ and $\nu_t = 2 \, \delta \hspace{-1pt} X^\xi_t \cdot (b(t, \mathcal{X}^{\xi}_t)-\tilde{b}(t, \tilde{\mathcal{X}}^{\xi}_t))$. By Young's inequality, we have for all $t \geq 0$,
\begin{align}\label{eq:stability_dissipativity_estimate}
    \nu_t &= 2 \, \delta \hspace{-1pt} X^\xi_t \cdot \big(b(t, \mathcal{X}^{\xi}_t)-b(t, \tilde{\mathcal{X}}^{\xi}_t)\big)+2 \, \delta \hspace{-1pt} X^\xi_t \cdot \big(b(t, \tilde{\mathcal{X}}^{\xi}_t)-\tilde{b}(t, \tilde{\mathcal{X}}^{\xi}_t)\big)\notag\\
    &\leq -\eta_1 |\delta \hspace{-1pt} X^\xi_t|^2
    +\eta_2\int_{-\infty}^t e^{2\theta_0(s-t)}|\delta \hspace{-1pt} X^\xi_s|^2\,ds
    +  \eta_0 |\delta X^\xi_t|^2  \, +\frac{(\delta b(t))^2}{\eta_0} \notag\\
    &= -(2\theta+\frac{\eta_2}{2(\theta_0-\theta)})|\delta \hspace{-1pt} X^\xi_t|^2
    +\eta_2 \int_{-\infty}^t e^{2\theta_0(s-t)}|\delta \hspace{-1pt} X^\xi_s|^2\,ds
    +\frac{(\delta b(t))^2}{\eta_0} \nonumber \, .
\end{align}
Now define $U_t \coloneqq |\delta \hspace{-1pt} X^\xi_t|^2$ and note that $U_t=0$ for $t\leq0$ and $dU_t = \nu_t dt$ for $t > 0$. Then the process $V$ defined in \eqref{def_V_t} satisfies $V_s \leq \frac{(\delta b(s))^2}{\eta_0}$, with $\gamma_1 = 2\theta+\frac{\eta_2}{2\theta_0-2\theta}$ and $\gamma_2 = \eta_2$. Applying Lemma~\ref{lem:exp_decreasing} and taking the supremum over $[0, t]$, we obtain:
\begin{align}
    |\mathcal{X}^{\xi}_t - \tilde{\mathcal{X}}^{\xi}_t |_{\theta}  =\sup_{s\in [0,t]}e^{\theta(s-t)} |\delta \hspace{-1pt} X^\xi_s|\leq \Big(\int_0^t e^{2\theta (s-t)} \frac{(\delta b(s))^2}{\eta_0} \, ds \Big)^{1/2}\nonumber \, 
\end{align}
Finally, the left inequality follows from the Cauchy--Schwarz inequality
and the fact that $K\in\K$. For the right inequality,
using Fubini's theorem and the monotonicity of $K$, we obtain
\begin{align*} 
\int_0^{\infty} K( s)\big| \mathcal{X}^{\xi}_s- \tilde{\mathcal{X}}^{\xi}_s \big|_{\theta}^2\,ds \le 
\int_0^{\infty} \int_0^s K(r)e^{2\theta (r-s)} \frac{(\delta b(r))^2}{\eta_0}\,dr\,ds\le 
\frac{1}{2 \eta_0 \theta}\int_0^{\infty} K(r)(\delta b(r))^2\,dr \, .
\end{align*}

\vspace{-10mm}
\qed

\noindent \textit{Proof of Theorem \ref{thm:stability_values}.} In this proof, we denote by $\X$ (resp. $\tilde{\X}$) the solution of the forward equation associated with $b$ (resp. $\tilde b$), with initial time $\tau=0$, initial data $\xi = \mathbf{0}$. Let $(Y,Z,\Lambda)$ and $(\tilde Y,\tilde Z,\tilde\Lambda)$ denote a solution of the ergodic backward SDE for the system $(b,H)$ and $(\tilde b,\tilde H)$ respectively with initial condition $\xi=\mathbf{0}$. By Theorem~\ref{thm:existence_uniqueness}, we can assume that $\Lambda,\tilde{\Lambda}$ are continuously differentiable with bounded derivatives. Let $K\in\K$, and $T,\rho>0$. Since $t\mapsto K_\rho(t)=\rho K(\rho t)$ is deterministic and of finite variation, applying the integration-by-parts formula to $K_\rho(Y-\tilde Y)$ on $[0,T]$ yields:
\begin{align*}
K_\rho(T)(Y_T-\tilde Y_T)-K_\rho(0)(Y_0-\tilde Y_0)
&= \int_0^T K_\rho(s)(Z_s-\tilde Z_s)\cdot dW^\beta_s + \int_{(0,T]} (Y_s-\tilde Y_s)\,dK_\rho(s) \\
&+\int_0^T K_\rho(s)\Big(-H(s,\mathcal X_s,Z_s)+ \tilde{H}(s,\tilde{\mathcal X}_s,Z_s)+\Lambda'(s)-\tilde\Lambda'(s)\Big)\,ds \, .
\end{align*}
where $\beta_s:=\frac{\tilde H(s,\tilde{\mathcal X}_s,Z_s)-\tilde H(s,\tilde{\mathcal X}_s,\tilde Z_s)}{|Z_s-\tilde Z_s|^2}(Z_s-\tilde Z_s)\mathbf 1_{\{Z_s\ne\tilde Z_s\}}$ is bounded by Assumption \ref{hamiltonian_assumption}. By Lemma~\ref{lem:girsanov_intZ_is_mg}, taking expectation under $\E^\beta$ gives
\begin{align}
K_\rho(T)&\E^\beta[Y_T-\tilde Y_T]-K_\rho(0)(Y_0-\tilde Y_0)  = \int_{(0,T]} \E^\beta[Y_s-\tilde Y_s]\,dK_\rho(s)  \label{eq:stability_ito_monotoneK} \\
&+\int_0^T K_\rho(s)\Big(-\E^\beta\big[H(s,\mathcal X_s,Z_s)- \tilde{H}(s,\tilde{\mathcal X}_s,Z_s)\big]+\Lambda'(s)-\tilde\Lambda'(s)\Big)\,ds \nonumber \, .
\end{align}
Recall that $\sup_{s\ge0}\E^\beta|Y_s-\tilde Y_s|<\infty$, by Proposition~\ref{prop:FSDE_existence} and the growth condition~\eqref{Y_decompsition}. Since $K$ is non-increasing and integrable, we have $K_\rho(T)\to0$ as $T\to\infty$. Passing to the limit in \eqref{eq:stability_ito_monotoneK}, we get:
$$
K_\rho(0)(Y_0\!-\!\tilde Y_0) =\!\int_0^\infty\!\!\!K_\rho(s)\Big(\E^\beta\big[H(s,\mathcal X_s,Z_s)+\tilde H(s,\tilde{\mathcal X}_s,Z_s)\big]+\Lambda'(s)-\tilde\Lambda'(s)\Big)\,ds 
%\\&\quad 
-\int_0^\infty\!\!\! \E^\beta[Y_s\!-\!\tilde Y_s]\,dK_\rho(s) \, .
$$
Notice that:
\(
|H(s,\mathcal X_s,Z_s)-\tilde H(s,\tilde{\mathcal X}_s,Z_s)|
\le
|H(s,\mathcal X_s,Z_s)-H(s,\tilde{\mathcal X}_s,Z_s)|
+
|H(s,\tilde{\mathcal X}_s,Z_s)-\tilde H(s,\tilde{\mathcal X}_s,Z_s)| \, .
\)
Apply Proposition~\ref{prop:stability_b} to $K_\rho\in\K$ gives:
\begin{align*}
\Big|\int_0^\infty\!\! & K_{\rho}(s)\,\E^\beta\big[H(s,\mathcal X_s,Z_s)-\tilde H(s,\tilde{\mathcal X}_s,Z_s)\big]\,ds\Big| \le C L_H \Big(\int_0^\infty\!\! K_{\rho}(s)(\delta b(s))^2\,ds\Big)^{1/2}
+\int_0^\infty\!\! K_{\rho}(s)\delta H(s)\,ds .
\end{align*}
Finally, since $s\mapsto \E^\beta|Y_s-\tilde Y_s|$ is bounded and $K$ is non-increasing,
\[
\Big|\int_0^\infty \E^\beta[Y_s-\tilde Y_s]\,dK_\rho(s)\Big|
\le \sup_{s\ge0}\E^\beta|Y_s-\tilde Y_s| \int_0^\infty d(-K_\rho(s))
\le K_\rho(0)\sup_{s\ge0}\E^\beta|Y_s-\tilde Y_s| \, ,
\]
which converges to $0$ as $\rho\downarrow0$ since $K_\rho(0)=\rho K(0)$. We obtain the desired estimate, since:
\begin{align*}
\limsup_{\rho\downarrow0}\Big|\int_0^\infty\!\! K_{\rho}(s)(\Lambda'(s)\!-\!\tilde\Lambda'(s))\,ds\Big|
\le
\limsup_{\rho\downarrow0}\Big(
CL_H \Big(\int_0^\infty\!\! K_{\rho}(s)(\delta b(s))^2\,ds\Big)^{1/2}
+\int_0^\infty\!\! K_{\rho}(s)\delta H(s)\,ds
\Big) \, .
\end{align*}

\vspace{-3mm}
\qed

%\vspace{10pt}

\noindent {\it Proof of Theorem~\ref{thm:stability_control}.}
    Let $\alpha\in \mathcal{A},\rho>0$. The claim follows from Proposition \ref{prop:stability_b} and the inequalities:
\begin{align*}\label{eq:stability_control_equation}
 \Big|\E^{\alpha}&\int_{0}^\infty K_{\rho}(t)(f(t, \X^\xi_t, \alpha_t)-\tilde{f}(t, \tilde{\X}^\xi_t, \alpha_t))\,  dt \Big| \nonumber\\  
 & \leq \Big|\E^{\alpha}\int_{0}^\infty K_{\rho}(t) (f(t, \X^\xi_t, \alpha_t)-f(t, \tilde{\X}^\xi_t, \alpha_t)) \,  dt\Big| +\Big|\E^{\alpha}\int_{0}^\infty K_{\rho}(t)(f(t, \tilde{\X}^\xi_t, \alpha_t)-\tilde{f}(t, \tilde{\X}^\xi_t, \alpha_t)) \, dt \Big|\nonumber \\
&\leq L_f \, \E^{\alpha}\int_{0}^\infty K_{\rho}(t)|\X^\xi_t-\tilde{\X}^\xi_t |_{\theta} \, dt +\int_{0}^\infty K_{\rho}(t) \,  \delta \hspace{-1pt} f(t)\, dt\, . \nonumber
\end{align*}

\vspace{-8mm}\qed

\section{Further results in special cases}\label{sec:special_cases}

In this section, we present three cases in which we can study the solution $(Y, Z,\Lambda)$ in further details. When the system is time-periodic, we observe that it behaves as if it were time-homogeneous Markov and the ergodic values are equal. In the time-inhomogeneous Markov case, we derive a uniqueness result for $(Y, Z)$. Finally, if the dynamics and the Hamiltonian are path-dependent with a finite delay, we obtain an explicit representation of $\Lambda$ in terms of the evolution system of measures of $\mathcal{X}$.

\subsection{Time-periodic systems}

In this section we assume that Assumptions~\ref{dissipativity_assumption} and~\ref{hamiltonian_assumption} hold, and $(b,H)$ is time-periodic. The next result shows that we obtain a unique ergodic cost in that case. This is consistent with the work of \citeauthor{cohen_periodic_EBSDE} \cite{cohen_periodic_EBSDE} for time-periodic Ornstein--Uhlenbeck dynamics, and \citeauthor{wu2026ergodiclinearquadraticoptimalcontrol} \cite{wu2026ergodiclinearquadraticoptimalcontrol} for a linear--quadratic with time-periodic coefficients. In the following we set the time origin to $\tau=0$ and omit it in the notations.

\vspace{5pt}

\begin{proposition}\label{prop:periodic_case}
Assume that the system $(b,H)$ is periodic in time. Then there exists a triple $(Y,Z,\lambda)\in\mathbf{E}(\C_{\theta})\times\R$ such that $(Y,Z, \Lambda)\in\mathbf{E}(\C_{\theta}) \times {\rm{UC}}$ is a solution of the ergodic backward SDE with $\Lambda(t) = \lambda t$. As a consequence, $\lambda_*(K)=\lambda^*(K)=\lambda$ for all $K\in\K$.
\end{proposition}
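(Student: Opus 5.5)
The plan is to show that $\Lambda(t)-\lambda t$ is bounded for some $\lambda\in\R$. Theorem~\ref{thm:existence_uniqueness} and Remark~\ref{rmk:smooth_repre} then let us replace $\Lambda$ by $\lambda t$, setting $\overline{Y}=Y+\lambda t-\Lambda$ with the same $Z$. The consequence is then immediate from Definition~\ref{def:values}: since $\Lambda'\equiv\lambda$, $\int_0^\infty K_\rho\lambda\,dt=\lambda$ for every $\rho$ and every $K$.

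To prove boundedness, let $P>0$ be the period of $(b,H)$, and use the flow version of Remark~\ref{rmk:ebsde_flow_path} with origin $\tau=P$. By periodicity, the shifted pair $(b,H)(\cdot+P,\cdot)$ equals $(b,H)$. Let $(Y,Z,\Lambda)$ be the solution from Theorem~\ref{thm:existence_uniqueness}. Since the Brownian increments $W^P_{P+\cdot}-W^P_P$ have the same law as $W$, the process $\mathcal X^{P,\xi}_{P+t}$ has the same law as $\mathcal X^{0,\xi}_t$, jointly with the driving noise. Define
\[
\hat Y^\xi_t:=Y^{P,\xi}_{P+t},\qquad \hat Z^\xi_t:=Z^{P,\xi}_{P+t},\qquad \hat\Lambda(t):=\Lambda(t+P).
\]
Taking the shifted Brownian motion as the driver, $(\hat Y,\hat Z,\hat\Lambda)$ solves the ergodic backward SDE~\eqref{EBackwards SDE} for the original system. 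The growth bound~\eqref{Y_decompsition} is inherited because $Y^{P,\xi}_t=v(t,\mathcal X^{P,\xi}_t)$ with $v$ Lipschitz in $\xi$ and $|v(t,\mathbf 0)|=0$. Uniqueness in the sense of~\eqref{eq:Lambda_unique_up_to_constant} is a statement about laws, since the Step~3 argument only uses expectations. Applying it gives $\sup_t|\Lambda(t+P)-\Lambda(t)-c|<\infty$ for $c=0$. I will be more careful and absorb constants: one gets $\sup_{t\ge0}|\Lambda(t+P)-\Lambda(t)|\le M$, but this alone does not give linear behaviour. \emph{This is the main obstacle}: boundedness of the increment is not enough, and we need that $g(t):=\Lambda(t+P)-\Lambda(t)$ is essentially constant.

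To get the constant, I would exploit the construction directly rather than just uniqueness. Since $v^T$ comes from the finite-horizon BSDE~\eqref{BSDE_finite_horizon} and the system is $P$-periodic, we have $v^{T+P}(t+P,\xi)=v^T(t,\xi)$ exactly, by uniqueness of the finite-horizon BSDE and its Markov representation. Hence
\[
\Lambda(t+P)-\Lambda(t)=\lim_n\big[v^{T_n}(t,\mathbf 0)-v^{T_n-P}(t,\mathbf 0)\big],
\]
a quantity of the form ``$v^T-v^{T-P}$''. Set $w^T(t):=v^T(t,\mathbf0)-v^{T-P}(t,\mathbf0)$. I would show, via the Girsanov linearization and the exponential contraction of Proposition~\ref{prop:forward_lip}, that $t\mapsto w^T(t)$ oscillates by at most $Ce^{-\theta(T-t)}$-type errors in $t$. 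This is the Fuhrman--Hu--Tessitore coupling argument: $v^T(t,\xi)-v^T(t,\xi')$ is Lipschitz uniformly, and comparing $v^T(t,\cdot)$ with $v^T(t+P,\cdot)=v^{T-P}(t,\cdot)$ shows that the differences converge as $T\to\infty$.

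A cleaner route, which I would try first, is the discounted approximation. Let $Y^{\rho,\xi}$ solve the infinite-horizon BSDE with discount $\rho$, with Markov function $v^\rho(t,\xi)$. Periodicity and uniqueness give $v^\rho(t+P,\xi)=v^\rho(t,\xi)$. We have $|\rho v^\rho|\le L_H$, and $v^\rho(t,\xi)-v^\rho(0,\mathbf0)$ is bounded uniformly in $\rho$ for $t\in[0,P]$ and bounded $\xi$, by the same Lipschitz and time estimates as Proposition~\ref{prop:Markov_rep}. Then extract $\rho_n\downarrow0$ with $\rho_n v^{\rho_n}(0,\mathbf0)\to\lambda$, and $v^{\rho_n}-v^{\rho_n}(0,\mathbf0)\to\bar v$ as in Proposition~\ref{prop:limit_decomposition}. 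The limit $\bar v$ is periodic in $t$, hence bounded in $t$ at $\xi=\mathbf0$. Passing to the limit in the discounted BSDE, the term $\rho\int Y^\rho$ converges to $\lambda(T-t)$, since $\rho(v^\rho-v^\rho(0,\mathbf0))\to0$ locally uniformly. This yields a solution with $\Lambda(t)=\lambda t$ directly, and boundedness of $\bar v(\cdot,\mathbf0)$ gives~\eqref{Y_decompsition}.
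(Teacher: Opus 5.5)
Your discounted-approximation route (the one you say you would try first) is correct, and it proves the statement by a genuinely different route from the paper. The paper never introduces a discounted equation. It stays with the finite-horizon fields $v^T$ and shows, by Girsanov linearization, that $(v^T(0,\mathbf 0)-v^T(t,\mathbf 0))-(v^{T'}(0,\mathbf 0)-v^{T'}(t,\mathbf 0))$ is bounded uniformly in $T,T',t$. It then uses periodicity to shift the \emph{terminal} time, $v^{T_n-k_2T_0}(s,\mathbf 0)=v^{T_n}(s+k_2T_0,\mathbf 0)$. After $n\to\infty$ this gives the almost-additivity $\sup_{k_1,k_2}|\Lambda((k_1+k_2)T_0)-\Lambda(k_1T_0)-\Lambda(k_2T_0)|<\infty$. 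Lemma~\ref{lem:cauchy_equation} then produces $\lambda$, uniform continuity extends the bound to all $t$, and Remark~\ref{rmk:smooth_repre} replaces $\Lambda$ by $\lambda t$.

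In your approach periodicity enters more cleanly, because the infinite-horizon field $v^\rho$ is exactly $P$-periodic. The finite-horizon $v^T$ is not, which is why the paper needs the Hyers step. You also get $\Lambda(t)=\lambda t$ directly, together with the Abelian characterization $\lambda=\lim_n\rho_n v^{\rho_n}(0,\mathbf 0)$; by \eqref{eq:Lambda_unique_up_to_constant} this in fact holds along all $\rho\downarrow 0$.

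The price is theory the paper does not develop. You need, in this path-dependent inhomogeneous setting:
the well-posedness of the bounded discounted BSDE and its Markov representation;
periodicity of $v^\rho$ via uniqueness in law;
the $\rho$-uniform Lipschitz and time estimates;
the $\mathbb{H}^2$-Cauchy property of $Z^{\rho_n}$ with the extra drift $\rho_nY^{\rho_n}-\rho_mY^{\rho_m}$.
You should write out this last step; it goes through as in Step~1 of the proof of Theorem~\ref{thm:existence_uniqueness}. The paper's argument uses only what Section~\ref{sec:ebsde} already provides, and shows that the specific $\Lambda$ built there is at bounded distance from $\lambda t$.

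Two further remarks. First, your intermediate ``coupling'' route is not justified as stated. Proposition~\ref{prop:forward_lip} is a pathwise contraction for two solutions with the same drift and the same noise. By contrast, $w^T(t)=\mathbb{E}^{\beta}[v^T(T-P,\mathcal X^{t,\mathbf 0}_{T-P})]$ involves a path- and $T$-dependent Girsanov drift. Near-constancy of $w^T$ in $t$ would need a forgetting or mixing property of the controlled dynamics, which is not available here (compare Remark~\ref{rmk:counterex_feller}).

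Second, your first attempt was closer to working than you concluded. Run the Step~3 argument of Theorem~\ref{thm:existence_uniqueness} for the shift by $kP$ for \emph{every} $k$, and keep the constant $\Lambda(kP)$ instead of setting $c=0$. This gives $|\Lambda(t+kP)-\Lambda(t)-\Lambda(kP)|\le C$ with $C$ independent of $k$ and $t$, for three reasons: $v(\cdot,\mathbf 0)\equiv 0$, the Lipschitz constant of $v$ in $\xi$ is uniform in time, and the moment bound \eqref{eq:estimate_X_Lip} is uniform in time and in $\beta\in\mathcal P_{L_H}$. That is exactly the almost-additivity the paper feeds into Lemma~\ref{lem:cauchy_equation}.
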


\begin{proof}
Let $T_0$ be a period of $(b,H)$, and let us proceed in three steps. \\

\vspace{-8pt}

\noindent\underline{Step 1:} Let $T, T' > 0$ and $(\X,Y^{T},Z^{T})$ and $(\X,Y^{T'},Z^{T'})$ be the unique solutions of the finite-horizon forward-backward SDE~\eqref{Functional_SDE}--\eqref{BSDE_finite_horizon}
with data $(0,T,\mathbf{0})$ and $(0,T',\mathbf{0})$, respectively. We denote by $v^T$ the function defined in Proposition \ref{prop:Markov_rep}. We first prove that:
\begin{equation}\label{eq:T,T'}
\sup_{\substack{0\le t\le T \wedge T'}}
\big|\big(v^{T}(0,\mathbf{0})-v^{T}(t,\mathbf{0}) \big)-\big(v^{T'}(0,\mathbf{0})-v^{T'}(t,\mathbf{0}) \big)\big| < \infty \, .    
\end{equation}
By Assumption \ref{hamiltonian_assumption}, $\beta_s := \frac{H( s, \mathcal{X}_s , Z_s^{T}) - H(s, \mathcal{X}_s,Z_s^{T'})}{|Z^{T}_s-Z^{T'}_s |^2}(Z^{T}_s-Z^{T'}_s) \mathbf{1}_{\{Z^{T}_s\neq Z^{T'}_s \}} \mathbf{1}_{\{ s\leq T\}}$ is a bounded process. By Proposition~\ref{prop:Markov_rep} and Lemma~\ref{lem:girsanov_intZ_is_mg},
\begin{align}
    \E^\beta \big[ \big(v^{T}(0,\mathbf{0})-v^{T}(t,\mathcal{X}_t) \big) - \big( v^{T'}(0,\mathbf{0}) - v^{T'}(t,\mathcal{X}_t) \big) \big]&=\E^\beta[(Y^T_0-Y^T_t) - (Y^{T'}_0-Y^{T'}_t)] \nonumber \\
    &=-\E^\beta \int_0^t (Z^{T}_s-Z^{T'}_s)\cdot dW^\beta_s \, = \,  0 \, . \nonumber
\end{align}
Moreover, by Propositions~\ref{prop:FSDE_existence} and~\ref{prop:Markov_rep},
$$\sup_{0\leq t\leq T \wedge T'} \big|\E^\beta \big[ \big(v^{T}(t,\mathbf{0})-v^{T}(t,\mathcal{X}_t) \big) - \big( v^{T'}(t,\mathbf{0}) - v^{T'}(t,\mathcal{X}_t) \big) \big] \big| \, \leq \,  2C \, \sup_{t \geq 0 }\E^\beta|\mathcal{X}_t|_\theta \, < \, \infty \, ,$$
and \eqref{eq:T,T'} follows from the triangle inequality. \\

\vspace{-8pt}

\noindent \underline{Step 2:} Recall that we may choose $(T_n)_{n\in\mathbb{N}}$ as in the Step 2 of the Proof of Proposition~\ref{prop:limit_decomposition} and  $\Lambda(t) \coloneqq -\lim_{T_n\rightarrow\infty}(v^{T_n}(t,\mathbf{0})-v^{T_n}(0,\mathbf{0}))$ as in the Step 1 of the proof of Theorem~\ref{thm:existence_uniqueness}, and let us establish that:

\vspace{-10pt}

\begin{equation}\label{eq:k_1,k_2}
\sup_{\substack{k_1,k_2\in\mathbb{N}}}
\big|\Lambda((k_1+k_2)T_0)- \Lambda(k_1T_0)-\Lambda(k_2T_0)\big| < \infty \, .    
\end{equation}
Given $k_1,k_2\in\mathbb{N}$ and for $n$ large enough such that $T_n\geq k_2T_0$, we take $t=k_1T_0$, $T=T_n$ and $T'=T_n-k_2T_0$ in~\eqref{eq:T,T'}, and obtain that:

$$\sup_{k_1,k_2\in\mathbb{N}, \, T_n\geq k_2T_0}
\big|v^{T_n}(0,\mathbf{0})-v^{T_n}(k_1T_0,\mathbf{0})-v^{T_n-k_2T_0}(0,\mathbf{0})+v^{T_n-k_2T_0}(k_1T_0,\mathbf{0})\big|<\infty \, .
$$
Since $b$ and $H$ have a period $T_0$, $v^{T_n-k_2T_0}(0,\mathbf{0})-v^{T_n-k_2T_0}(k_1T_0,\mathbf{0})=v^{T_n}(k_2T_0,\mathbf{0})-v^{T_n}((k_1+k_2)T_0,\mathbf{0})$. Thus we have:

\vspace{-10pt}

$$\sup_{k_1,k_2\in\mathbb{N}, \, T_n\geq k_2T_0}
\big|v^{T_n}(0,\mathbf{0})-v^{T_n}(k_1T_0,\mathbf{0})-v^{T_n}(k_2T_0,\mathbf{0})+v^{T_n}((k_1+k_2)T_0,\mathbf{0})\big|<\infty \, .$$
Then we obtain~\eqref{eq:k_1,k_2} by passing to the limit $n\rightarrow\infty$. \\

\vspace{-8pt}

\noindent \underline{Step 3:} By~\eqref{eq:k_1,k_2} and Lemma~\ref{lem:cauchy_equation}, whose proof follows the idea of \citeauthor{Hyers1941}~\cite{Hyers1941} and can be viewed as a variant of Fekete's lemma, there exists $\lambda\in\R$ such that $\sup_{k\in\mathbb N}\bigl|\Lambda(kT_0)-\lambda kT_0\bigr|<\infty$. Since $\Lambda\in {\rm{UC}}$, it follows that $\sup_{t\ge0}\bigl|\Lambda(t)-\lambda t\bigr| < \infty$. Moreover, by Steps 1--3 in the proof of Theorem~\ref{thm:existence_uniqueness}, there exists $(Y,Z)\in\mathbf E$ such that $(Y,Z,\Lambda)$ is a solution of the ergodic backward SDE~\eqref{EBackwards SDE}. Consequently, defining $\overline{\Lambda}(t)=\lambda t$, then $(Y+\overline{\Lambda
}-\Lambda,Z,\overline{\Lambda})$ is also a solution of the ergodic backward SDE~\eqref{EBackwards SDE}.
\end{proof}

\subsection{Uniqueness in the time-inhomogeneous Markov setting}\label{sec:time_inhom}

In this section we assume that Assumptions \ref{dissipativity_assumption} and \ref{hamiltonian_assumption} hold and the system to be Markov time-inhomogeneous, \textit{i.e.} there exist two functions $\overline{b} : \mathbb{R} \times \mathbb{R}^d\to\R$ and $\overline{H}: \mathbb{R} \times \mathbb{R}^d \times \mathbb{R}^d\to \R$ such that $b(t, \xi) = \overline{b}(t, \xi_0)$ and $H(t, \xi, z) = \overline{H}(t, \xi_0, z)$. To simplify the notations, we will write $b$ and $H$ instead of $\overline{b}$ and $\overline{H}$, and $x\in\R^d$ instead of $\xi\in\C_\theta$. We first reformulate Assumption \ref{dissipativity_assumption} in the present context.

\begin{assumption} \label{time_inhom_assumption}
\setcounter{assumptiontag}{0}
    \begin{itemize}
        \item[]

        \refstepcounter{assumptiontag}
        \item[] \hspace{-30pt} (\theassumptiontag) \label{Lpz_condition_lip_inhom} \hspace{2pt} \textnormal{Lipschitz and linear growth:} {\it There exists $L_b>0$ such that for any $t \in \mathbb{R}$ and $x, x' \in \mathbb{R}^d$,
        \begin{equation*}
            \begin{array}{ccc}
                \big| b(t, x) - b(t, x') \big| \leq L_b \, | x-x'| & \text{and} &
                |b(t, x)| \leq L_b \big(1 + | x | \big) \,.
            \end{array}
        \end{equation*}}

        \refstepcounter{assumptiontag}
        \item[] \hspace{-26pt}(\theassumptiontag) \label{dissipativity_condition_inhom} \hspace{1pt} \textnormal{Dissipativity:} {\it There exists $\eta_0 > 0$ such that:
        \begin{equation*}
            \begin{array}{cc}
            \displaystyle 2(x - x') \cdot \big(b(t, x) - b(t, x')\big) \leq - \eta_0 |x-x'|^2 \, , & \text{for all } \, x, x' \in \mathbb{R}^d \, , \, t \in \mathbb{R} \, .
            \end{array}
        \end{equation*}}
    \end{itemize}
\end{assumption} 

All conclusions from the previous sections hold in the present case. In particular, in Proposition~\ref{prop:forward_holder}, the modulus of continuity term $\omega_{\xi,\theta}$ vanishes. In Proposition~\ref{prop:ebsde_flow_uniqueness} below, we prove the existence of a decoupling field for the flow version of the ergodic backward SDE. We prove its uniqueness in Theorem~\ref{thm:uniqueness_v_zeta} below, in a suitable sense. By Theorem~\ref{thm:existence_uniqueness} and Remark~\ref{rmk:ebsde_flow_path}, there exists a solution $(Y^\tau,Z^\tau,\Lambda)$ to the flow version of the ergodic backward SDE~\eqref{eq:ebsde_flow_path}. Moreover, there exists a Markovian representation function $v$ for $Y$ such that $Y^{\tau,\xi}_t = v(t,\X^{\tau,\xi}_t)$. In the inhomogeneous Markov case, we can also prove the existence of a Markovian representation function $\zeta$ for $Z$.

\vspace{5pt}

\begin{proposition}\label{prop:ebsde_flow_uniqueness}
There exists a measurable function $\zeta : \mathbb{R} \times \R^d \to \mathbb{R}^d$ such that
$Z_t^{\tau,x} = \zeta(t,X_t^{\tau,x})$, $d\mathbb{P}\otimes dt$-a.e., for all $\tau\in\R$ and $x \in \R^d$, where $(Y^\tau,Z^\tau,\Lambda)$ is the solution of ergodic backward SDE~\eqref{eq:ebsde_flow_path} as constructed in Theorem~\ref{thm:existence_uniqueness} and Remark~\ref{rmk:ebsde_flow_path}.
\end{proposition}

\begin{proof}
Throughout the proof, $C$ denotes a positive constant, which may change from one occurrence to another. We consider the sequence $(T_n)_{n\in\mathbb{N}}$ constructed in Proposition~\ref{prop:limit_decomposition}. Recall that by the same argument as in the proof of Theorem~\ref{thm:existence_uniqueness}, $Z^{\tau,T_n,x}$ converges to $Z^{\tau,x}$ in $\mathbb{H}^2_{\tau,T}(\R^d)$ for all $T\geq \tau$. We claim that there exists a
subsequence $T'_n$ of $T_n$, with $T'_n\geq n$ such that $\|Z^{\tau,T'_n,x}-Z^{\tau,x}\|_{\HH_{\tau,n}}\leq C(1+n^2)2^{-n}$ for all $x\in\mathbb{R}^d$, $\tau\leq T'_n$, and integers $n\geq |x|+|\tau|$. By Propositions \ref{prop:forward_lip}, \ref{BSDE_finite_horizon} and \ref{prop:limit_decomposition}, and the same argument which lead to \eqref{eq:z_n,m_boundedness}, we obtain that for all $\tau\leq T\leq T'$ and $x,x'\in\R^d$,
\begin{equation} \label{eq:z_x,x'_boundedness}
   \hspace{-5pt} \|Z^{\tau,T',x}-Z^{\tau,T',x'}\|_{\HH_{\tau,T}} \, \leq \,  \|Z^{\tau,T',x}-Z^{\tau,T',x'}\|_{\HH_{\tau,T'}} \, \leq \, C  \|Y^{\tau,T',x}-Y^{\tau,T',x'}\|_{\HH_{\tau,T'}}\, \leq \,  C|x-x'|^2 \, .
\end{equation}
Similarly, for all $\tau\le \tau'\leq T\leq T'$, since $Z^{\tau',T,x}$ is unique in law, we may replace $W^{\tau'}$ by the shifted Brownian motion
$W^\tau_t-W^\tau_{\tau'}$, $t\ge \tau'$, without changing the law of $Z^{\tau',T,\xi}$. By Proposition~\ref{prop:forward_holder} and the same argument as before, we have: 
\begin{equation}
\label{eq:z_tau_boundedness}
    \|Z^{\tau',T',x}-Z^{\tau,T',x}\|_{\HH_{\tau',T}} \, \leq \,   C\E|X^{\tau',x}_\tau-x|^2 \, \leq \, C(1+|x|^2)(\tau'-\tau) \, .
\end{equation}
Then, passing to the limit $T'=T_n\to\infty$ in~\eqref{eq:z_x,x'_boundedness} and~\eqref{eq:z_tau_boundedness}, we obtain, for $\tau\leq\tau'\leq T$,
\begin{equation}\label{eq:z_x,tau}
\begin{array}{ccc}
     \|Z^{\tau,,x}-Z^{\tau,x'}\|_{\HH_{\tau,T}} \leq  C|x-x'|^2 & \text{and} & \|Z^{\tau',x}-Z^{\tau,x}\|_{\HH_{\tau',T}} \leq C(1+|x|^2)(\tau'-\tau) \, .   
\end{array}
\end{equation}
Given $n\in\mathbb{N}$, since $E_n=[-n,n]^{d+1}$ is compact, there exists a finite set  $(\tau^n_i,x^n_i)_i\subset E_n$, such that $E_n\subseteq \cup_i B((\tau^n_i,x^n_i),2^{-n})$. Thus there exists a subsequence  $(T'_n)$ of $(T_n)$ such that $T'_n\geq n$ and \begin{equation}\label{eq:Z_partition}
\|Z^{\tau^n_i,T'_n,x^n_i}-Z^{\tau^n_i,x^n_i}\|_{\HH_{\tau^n_i,n}} \leq C2^{-n} \, .    
\end{equation} 
Now if $n\geq |x|+|\tau|$, then $(\tau,x)\in E_n$, there exists $i$ such that $|x-x^n_i|,|\tau-\tau^n_i|\leq 2^{-n}.$ Thus by~\eqref{eq:z_x,x'_boundedness},~\eqref{eq:z_tau_boundedness},~\eqref{eq:z_x,tau},~\eqref{eq:Z_partition} and triangle inequality, we have that for $T\leq n$: 
$$\|Z^{\tau,T'_n,x}-Z^{\tau,x}\|_{\HH_{\tau,T}}\leq\|Z^{\tau,T'_n,x}-Z^{\tau,x}\|_{\HH_{\tau,n}}\leq C(1+n^2)2^{-n},$$
and then $Z^{\tau,T'_n,x}\to Z^{\tau,x}$,
$d\mathbb P\otimes dt$-a.e.\hspace{-3pt} by the Borel-Cantelli Lemma. We now define the measurable map $\displaystyle\zeta(t,x)\!:=\! \lim_{n\rightarrow\infty} \zeta^{T'_n} (t,x)$ if the limit exists, and $0$ otherwise, so that $Z^{\tau,x}_t\!=\!\zeta(t,X^{\tau,x}_t)$, $d\mathbb{P} \otimes dt-$a.e. 
\end{proof}

\begin{remark}\label{rmk:sigma_compact}
This proof relies on the fact that $\R^d$ is $\sigma$-compact, i.e., a countable union of compact sets. However, an infinite-dimensional Banach space is not $\sigma$-compact. Therefore, this argument applies only to $\R^d$, and not to $\C_\theta$.
\end{remark}

\begin{definition}
Let $\mathcal{P}_2$ be the subset of probability distributions $\mu$ on $\mathbb{R}^d$ satisfying $\mu(|\cdot|^2) \coloneqq
\int_{\R^d} |x|^2\mu(dx)<\infty$. For any $\mu, \nu \in \mathcal{P}_2$, the total variation distance with weight $1+|\cdot|^2$ is defined by:
\begin{equation*}
         \displaystyle d_{\rm{TV},2}(\mu, \nu):=\sup_{\left|\frac{\phi(\cdot)}{1+|\cdot|^2}\right|_\infty\leq 1 }\Big|\int_{\R^d} \phi(x) (\mu-\nu)(dx)\Big| = \int_{\R^d} (1+|x|^2)\, |\mu-\nu|(dx),
\end{equation*}
where $|\mu-\nu|$ denotes the total variation measure associated with the signed measure $\mu-\nu$.
\end{definition}

In the next result, we show that a variant of the state process satisfies a mixing property. 

\begin{proposition}\label{prop:ergodicity}
Let $\Gamma : \mathbb{R} \times \mathbb{R}^d \to \mathbb{R}^d$ be a bounded and measurable function and suppose that $b$ satisfies Assumption \ref{time_inhom_assumption}. Then the equation:

    \vspace{-8pt}
    
    \begin{equation}\label{SDE_variant_bounded_measurable}
    \begin{array}{cc}
         \displaystyle dX^{\tau, x}_t=(b+\Gamma)(t,X_t^{\tau, x})\,dt + dW^{\tau}_t \, , & \displaystyle X_{\tau}^{\tau, x}=x \, ,
    \end{array}
    \end{equation}
    admits a weak solution unique in law for all $\tau \in \mathbb{R}$. Denote by $P_{s,t}$ its transition semi-group. Then $P$ satisfies the mixing property, \textit{i.e.} there exist $C, \eta > 0$ such that, for all $\mu, \nu \in \mathcal{P}_2$, 
    
    \vspace{-5pt}

    \begin{equation}\label{feller_inhom}
        \begin{array}{cc}
        d_{\rm{TV},2}(\mu P_{s, t} \, , \, \nu P_{s,t})\le C \, e^{-\eta (t-s)} \, d_{\rm{TV},2}(\mu, \nu) \, , & s \leq t \in \mathbb{R} \, ,
        \end{array}
    \end{equation}
    Moreover, there is a unique evolution system of measures $(m_t)_{t \in \R}$ for $P$ satisfying $\sup_{t\in\R} m_t(|\cdot|^2)<\infty$. 
    \end{proposition}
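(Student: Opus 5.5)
Weak existence and uniqueness in law for \eqref{SDE_variant_bounded_measurable} come from Girsanov's theorem. By Assumption \ref{time_inhom_assumption}, the equation without $\Gamma$ has a unique strong solution. Since $\Gamma$ is bounded, the Novikov condition holds on every finite interval $[\tau,T]$, so changing the measure by the density $\exp(\int\Gamma\cdot dW-\frac12\int|\Gamma|^2)$ gives a weak solution. Uniqueness in law follows by reversing the change of measure, since any weak solution is turned into a solution of the $\Gamma$-free equation, which is unique in law. This also shows that $P_{s,t}$ is a well-defined Markov transition kernel.

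For the mixing estimate \eqref{feller_inhom}, I would follow a Harris-type argument for time-inhomogeneous chains, in the weighted form of Hairer--Mattingly, and prove two ingredients that are uniform in the initial time $s$.

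\emph{(a) Lyapunov bound.} Set $V(x)=1+|x|^2$. By Itô's formula, dissipativity gives $2x\cdot b(t,x)\le -\eta_0|x|^2+2|x||b(t,0)|\le-\eta_0|x|^2+2L_b|x|$. Combined with the boundedness of $\Gamma$, this yields $\mathcal L_tV\le -\frac{\eta_0}{2}V+C$, hence $P_{s,s+h}V\le e^{-\eta_0h/2}V+C$ uniformly in $s$.

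\emph{(b) Minorization on sublevel sets.} Fix $h>0$ and $R$ large. I want a $\delta>0$ such that $\|\delta_xP_{s,s+h}-\delta_yP_{s,s+h}\|_{\rm TV}\le 2(1-\delta)$ for all $|x|,|y|\le R$ and all $s\in\R$. First I would take $\Gamma=0$. Girsanov against Brownian motion then shows that the law of $X_{s+h}$ has a density bounded below on a fixed ball, because the drift is of linear growth with constants independent of $s$. A bounded $\Gamma$ is then absorbed by a further Girsanov change on $[s,s+h]$ with controlled entropy, and Pinsker's inequality keeps the lower bound uniform. Alternatively one can couple the two diffusions by reflection, using the dissipativity, and stop at the coupling time.

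With (a) and (b) in hand, the Hairer--Mattingly theorem gives contraction in the weighted norm $d_{\mathrm{TV},2}$ (equivalently $\int V\,d|\mu-\nu|$) over one step of length $h$, at a rate $\bar\alpha<1$ uniform in $s$. That proof only uses the one-step kernel, so time-inhomogeneity causes no difficulty. Iterating over $\lfloor (t-s)/h\rfloor$ steps and handling the remaining step with (a) gives \eqref{feller_inhom}, with $\eta=-\ln\bar\alpha/h$.

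For the evolution system of measures, I would define $m_t:=\lim_{s\to-\infty}\delta_0P_{s,t}$. The sequence is Cauchy in $d_{\mathrm{TV},2}$: for $s'<s$, write $\delta_0P_{s',t}=(\delta_0P_{s',s})P_{s,t}$ and apply \eqref{feller_inhom}. The distance $d_{\mathrm{TV},2}(\delta_0P_{s',s},\delta_0)$ is bounded by (a), so the Cauchy differences are of order $e^{-\eta(t-s)}$. The limit satisfies $m_sP_{s,t}=m_t$ by continuity of $P_{s,t}$ in $d_{\mathrm{TV},2}$, and $\sup_t m_t(|\cdot|^2)<\infty$ by (a). For uniqueness, suppose $m'$ is another evolution system with bounded second moments. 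Then $d_{\mathrm{TV},2}(m_t,m'_t)=d_{\mathrm{TV},2}(m_sP_{s,t},m'_sP_{s,t})\le Ce^{-\eta(t-s)}\sup_r\big(m_r(V)+m'_r(V)\big)$, which tends to $0$ as $s\to-\infty$.

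I expect the main obstacle to be (b). The minorization constant must be uniform in $s$ even though $b$ is only measurable in time and $\Gamma$ is merely bounded measurable. I would get it from Girsanov estimates that depend only on $L_b$, $R$, $h$ and $\|\Gamma\|_\infty$.
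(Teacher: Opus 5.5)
Your architecture matches the paper's proof step by step. You use Girsanov for weak existence and uniqueness in law. You then combine the Lyapunov bound $P_{s,t}|\cdot|^2\le e^{-\eta_0(t-s)/2}|\cdot|^2+K$ with a minorization on balls that is uniform in the initial time, and apply a one-step Hairer--Mattingly contraction at a rate independent of the starting time. You iterate, controlling the leftover piece by the Lyapunov bound, define $m_t$ as the pullback limit of $\delta_0P_{s,t}$, and get uniqueness by letting $s\to-\infty$.

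One precision: Hairer--Mattingly gives a strict one-step contraction for the weight $1+\beta|\cdot|^2$ with $\beta\in(0,1)$ small, not for $d_{\rm TV,2}$ itself. The constant $C$ in \eqref{feller_inhom} comes from $\beta\,d_{\rm TV,2}\le d_\beta\le d_{\rm TV,2}$, exactly as in the paper.

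The only real divergence is in (b). The paper takes the Menozzi--Pesce--Zhang lower Gaussian bound $p(s,x,t,y)\ge\varepsilon\,g_\delta(t-s,\theta_{t,s}(x)-y)$ for the \emph{full} drift $b+\Gamma$. It bounds the flow $\theta$ on $B_R$ uniformly in $s$, which gives $p(s,x,s+1,\cdot)\ge\varepsilon_R\,q_R$ with $\varepsilon_R$ independent of $s$.

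The step in your (b) that does not work is absorbing $\Gamma$ with Pinsker. Pinsker controls the perturbation only additively: $\sup_A|\mathbb P(X^{\Gamma}_{s+h}\in A)-\mathbb P(X^{0}_{s+h}\in A)|\le\frac12\|\Gamma\|_\infty\sqrt h$. The overlap $\int p^0_x\wedge p^0_y$ you want to keep positive is, for $|x-y|$ up to $2R$ and small $h$, at most of order $e^{-cR^2/h}$. An additive error can only preserve an overlap larger than itself, so this forces a smallness condition on $\|\Gamma\|_\infty$. Nothing in the statement guarantees that, and in Theorem \ref{thm:uniqueness_v_zeta} $\Gamma$ is only bounded by $L_H$.

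The repair is not to split off $\Gamma$ at all. Your Girsanov-against-Brownian-motion argument uses only measurability and linear growth of the drift. So run it directly for $b+\Gamma$, whose growth constant $L_b+\|\Gamma\|_\infty$ is independent of $s$; this is what the paper's lemma does too. Be aware that with an unbounded drift, Girsanov (via Bene\v{s}) only identifies the law. To get a pointwise density lower bound you must, e.g., condition on the endpoint and apply Jensen under the Brownian bridge. This yields $p(s,x,s+h,y)\ge(2\pi h)^{-d/2}e^{-|y-x|^2/(2h)}e^{-C(1+|x|^2+|y|^2)}$, with $C$ depending only on $h$ and $L_b+\|\Gamma\|_\infty$.

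If you want to keep the two-step structure, replace Pinsker by a multiplicative comparison such as $Q(A)\ge\exp\big(-(\mathrm{KL}(P\|Q)+\log 2)/P(A)\big)$, combined with an overlap argument.

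Your reflection-coupling alternative is also sound and the most elementary route. $|X_t-Y_t|$ is dominated pathwise by $2R+2\|\Gamma\|_\infty t+2B_t$ for a one-dimensional Brownian motion $B$, so coupling before time $h$ has probability bounded below uniformly in $s$. What remains is to handle a merely measurable $\Gamma$, e.g.\ by smoothing it and passing to the limit in total variation. Either repaired route gives a self-contained proof of (b) that avoids the heat-kernel result the paper imports.
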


The proof is reported in Appendix \ref{app:mixing_prop_sec}. Our main result of this section states that the solution $(Y, Z, \Lambda)$ of the ergodic backward SDE~\eqref{eq:Lambda_unique_up_to_constant} and~\eqref{eq:ebsde_flow_path} is unique in the class of solutions having a Markov representation. The proof adapts standard arguments from homogeneous ergodic backward SDEs, see \textit{e.g.} \citeauthor{furhman_ergodic} \cite{furhman_ergodic}, \citeauthor{hu_lemonier} \cite{hu_lemonier}.

\begin{theorem}\label{thm:uniqueness_v_zeta}
    Let Assumptions \ref{hamiltonian_assumption} and \ref{time_inhom_assumption} hold, and let $v,\tilde v:\mathbb R\times\mathbb R^d\to\mathbb R$, 
$\zeta,\tilde\zeta:\mathbb R\times\mathbb R^d\to\mathbb R^d$, $\Lambda,\tilde\Lambda : \mathbb{R} \to \mathbb{R}$ be measurable maps, with $v,\tilde v$ at most linearly growing in $x$ uniformly in $t$, and define:
   \begin{equation*}
        \begin{array}{ccc}
             \displaystyle (Y^{\tau, x}_t, Z_t^{\tau, x}) \coloneqq (v(t,X_t^{\tau,x}),\zeta(t,X_t^{\tau,x})) & \text{and} & \displaystyle (\tilde{Y}^{\tau, x}_t, \tilde{Z}_t^{\tau, x}) \coloneqq (\tilde{v}(t,X_t^{\tau,x}), \tilde{\zeta}(t,X_t^{\tau,x})) \, .
        \end{array}
    \end{equation*}
Assume that $(Y^{\tau}, Z^{\tau}, \Lambda)_{t\geq\tau}$, $(\tilde{Y}^{\tau}, \tilde{Z}^{\tau}, \tilde{\Lambda})_{t\geq\tau}\in\mathbf{E}_{\tau}(\R^d) \times {\rm{UC}}_{\tau}$ are solutions of the ergodic backward SDE \eqref{eq:ebsde_flow_path} for all $\tau \in\R$.
 
Then, $Z^{\tau, x} = \tilde{Z}^{\tau, x}$, $d\mathbb{P}\otimes dt-$a.e. \hspace{-7pt} and $v-\Lambda=\tilde v-\tilde \Lambda + c$ for some $c \in \mathbb{R}$.
\end{theorem}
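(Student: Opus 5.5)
Set $w:=(v-\Lambda)-(\tilde v-\tilde\Lambda)$ and $\delta\zeta:=\zeta-\tilde\zeta$. Subtracting the two flow equations \eqref{eq:ebsde_flow_path} and linearizing the difference of Hamiltonians gives, for every $\tau\le t\le T$ and $x\in\R^d$,
\begin{equation*}
w(t,X^{\tau,x}_t)=w(T,X^{\tau,x}_T)-\int_t^T \delta\zeta(s,X^{\tau,x}_s)\cdot dW^{\tau,\beta}_s .
\end{equation*}
Here $\beta_s=\Gamma(s,X^{\tau,x}_s)$ with
\begin{equation*}
\Gamma(s,y):=\frac{H(s,y,\zeta(s,y))-H(s,y,\tilde\zeta(s,y))}{|\delta\zeta(s,y)|^2}\,\delta\zeta(s,y)\,\mathbf 1_{\{\delta\zeta(s,y)\neq0\}}.
\end{equation*}
By Assumption \ref{hamiltonian_assumption}, $\Gamma$ is a bounded measurable Markovian feedback. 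This Markov form of $\beta$ is the point of assuming Markov representations for both $Z$ and $\tilde Z$. Under $\mathbb P^{\tau,T,\beta}$, the process $X^{\tau,x}$ solves \eqref{SDE_variant_bounded_measurable} with this $\Gamma$. Let $P^\Gamma_{s,t}$ denote the corresponding transition semigroup. By Lemma \ref{lem:girsanov_intZ_is_mg}, the stochastic integral is a true martingale, so
\begin{equation*}
w(\tau,x)=P^\Gamma_{\tau,T}[w(T,\cdot)](x),\qquad \tau\le T .
\end{equation*}

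The functions $v,\tilde v$ grow at most linearly. Moreover $\Lambda-\tilde\Lambda$ is bounded, which follows from Step 3 of the proof of Theorem \ref{thm:existence_uniqueness}, since that argument only uses the growth condition. Hence $|w(t,y)|\le C(1+|y|)\le C(1+|y|^2)$ uniformly in $t$. Apply the mixing estimate \eqref{feller_inhom} of Proposition \ref{prop:ergodicity} with $\mu=\delta_x$ and $\nu=\delta_{x'}$:
\begin{equation*}
|w(\tau,x)-w(\tau,x')|=\big|(\delta_xP^\Gamma_{\tau,T}-\delta_{x'}P^\Gamma_{\tau,T})[w(T,\cdot)]\big|\le C\,e^{-\eta(T-\tau)}(2+|x|^2+|x'|^2).
\end{equation*}
Letting $T\to\infty$ shows that $w(\tau,\cdot)$ is constant, say $w(\tau,x)=c(\tau)$.

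Next we show that $c$ does not depend on $\tau$. Using $w(\tau,x)=\E^\beta[w(T,X_T^{\tau,x})]=c(T)$, we get $c(\tau)=c(T)$ for all $\tau\le T$, so $c$ is a constant. This gives $v-\Lambda=\tilde v-\tilde\Lambda+c$.

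Finally, since $w(t,X_t^{\tau,x})$ is constant in $t$, the martingale $\int_\tau^{\cdot}\delta\zeta(s,X_s)\cdot dW^{\tau,\beta}_s$ vanishes identically. Its quadratic variation therefore vanishes, so $\delta\zeta(s,X^{\tau,x}_s)=0$, $d\mathbb P\otimes ds$-a.e. (the measures are equivalent on $[\tau,T]$), i.e.\ $Z^{\tau,x}=\tilde Z^{\tau,x}$.

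The main obstacle is justifying that the perturbed process falls under Proposition \ref{prop:ergodicity}, with constants uniform in the merely bounded measurable $\Gamma$. This requires that the Girsanov law coincides with the unique weak solution of \eqref{SDE_variant_bounded_measurable}, and that the mixing constants depend only on $\sup|\Gamma|\le L_H$, which is what allows letting $T\to\infty$. I would check this by invoking the weak uniqueness stated in Proposition \ref{prop:ergodicity} and tracking its constants. A minor issue is the pointwise-in-$x$ identity $w(\tau,x)=P^\Gamma_{\tau,T}w(T,\cdot)(x)$ for every $x$, which holds because the equation is assumed to hold for each initial condition $(\tau,x)$.
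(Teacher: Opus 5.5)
Your proposal is correct and follows essentially the same route as the paper: the Markovian linearization $\Gamma$, identification of the Girsanov law with the unique weak solution of \eqref{SDE_variant_bounded_measurable}, the mixing estimate \eqref{feller_inhom} to remove the $x$-dependence, and the vanishing stochastic integral to get $\zeta=\tilde\zeta$ along $X$. Two small remarks. The obstacle you flag is not a real one, since $\Gamma$ is a single fixed bounded function and Proposition~\ref{prop:ergodicity} applies to it with fixed constants. Also, the boundedness of $\Lambda-\tilde\Lambda$ is unnecessary, because any term constant in $y$ integrates to zero against $\delta_xP_{\tau,T}-\delta_{x'}P_{\tau,T}$.
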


\begin{proof}
In this proof, $C$ denotes a positive constant which may change from line to line. Let $t\in\R$ and $x,y\in\R^d$. We first prove that $v(t,x)-v(t,y)=\tilde{v}(t,x)-\tilde{v}(t,y)$. As the measurable function 
$\Gamma(s, x) \coloneqq \frac{H(s,x,\zeta(s,x))-H(s,x,\tilde{\zeta}(s,x))}{|\zeta(s,x)-\tilde{\zeta}(s,x)|^2}(\zeta(s,x)-\tilde{\zeta}(s,x))\mathbf{1}_{\{\zeta(s,x)-\tilde{\zeta}(s,x)\neq 0\}} 
$
is bounded by Assumption \ref{hamiltonian_assumption}, 
\begin{align} \label{v-v'}
    (v-\tilde{v})(t,x)  - (\Lambda- \tilde{\Lambda})(t)  = (v-\tilde{v})(T,X^{t,x}_T) -  (\Lambda- \tilde{\Lambda})(T)  - \int_t^T  (\zeta-\tilde{\zeta})(s, X^{t, x}_s)  \cdot d W_s^{t, \beta^{t, x}} ,
\end{align}
where $\beta^{t,x}_s:=\Gamma(s,X^{t, x}_s)$. Under $\mathbb{P}^{t, \beta^{t,x}}$, $X^{t,x}$ is the unique weak solution of \eqref{SDE_variant_bounded_measurable} with initial data $(t, x)$. Since $v,\tilde{v}$ have at most linear growth, the estimate $|(v - \tilde{v})(t,x)| \leq C(1 + |x|^2)$ holds for some constant $C$. Taking expectations in \eqref{v-v'} and using Proposition~\ref{prop:ergodicity} and Lemma~\ref{lem:girsanov_intZ_is_mg}, we obtain:
\begin{align*}
|(v-\tilde{v})(t,x)-(v-\tilde{v})(t,y)| &=\big|\E^{t, \beta^{t,x}}[(v-\tilde{v})(T,X^{t,x}_T)]-\E^{t, \beta^{t,y}}[(v-\tilde{v})(T,X^{t,y}_T)]\big| \\[5pt]
&\leq C \, d_{\rm{TV},2}(\delta_x P_{t, T} \, , \, \delta_y P_{t, T}) 
%\\[5pt] &
\leq C (1+|x|^2+|y|^2)e^{-\eta (T-t)} \xrightarrow[T\to\infty]{} 0.
\end{align*}
This shows that $(v-\tilde{v})(t,x)$ does not depend on $x$. In particular, by \eqref{v-v'}, the Itô integral
$$
    \int_t^T  (\zeta-\tilde{\zeta})(s, X^{t, x}_s) \cdot d W_s^{t, \beta^{t,x}}  = \big[(v-\tilde{v})(.,x) -  (\Lambda- \tilde{\Lambda})\big]_{t}^T 
$$
is deterministic. As $\mathbb{P}^{t, \beta^{t,x}}\!\!\sim\mathbb{P}$ on $[t, T]$, we conclude that $\zeta(t,X^{x}_t)=\tilde{\zeta}(t,X^x_t)$, $d\mathbb{P} \otimes dt-$a.e. It follows that the function $(v-\tilde{v})(t, x)- (\Lambda-\tilde{\Lambda})(t)$ is constant on $\mathbb{R} \times \mathbb{R}^d$, which proves the claim.
\end{proof}

\subsection{Representation of \texorpdfstring{$\Lambda$}{Lambda} in the finite delay setting}

We now give a representation of the function $\Lambda$ in the case where $b$ and $H$ depend on the past with finite memory.  Our proof is a generalization of \citeauthor{furhman_ergodic} \cite[Corollary~5.9]{furhman_ergodic} and leverages the results on nonlinear PDEs with delay studied in \citeauthor{HJB_delay} \cite{HJB_delay}. Unfortunately, our assumption cannot be adapted to the general case with infinite delay, see Remark \ref{rmk:counterex_feller}. We introduce a new set of notations. Given two Banach spaces $E$ and $F$,
\begin{itemize}
    \item $L(E, F)$ is the space of bounded linear operators from $E$ to $F$, endowed with the usual norm. 

    \item $\mathcal{G}^1(E, F)$ is the space of continuous functions $u : E \rightarrow F$ such that 1) $u$ is Gâteaux differentiable on $E$, with Gâteaux differential at point $x \in E$ denoted by $\nabla u(x) \in L(E, F)$, and 2) for every $h \in E$, the map $x \mapsto \nabla u(x)h$ is continuous. 

    \item For an interval $I\subset\R$, $\mathcal{G}^{0,1}(I\times E, F)$ is the space of continuous maps $v : I \times E \rightarrow F$ such that 1) $v(t, \cdot) \in \mathcal{G}^1(E, F)$ for all $t \in I$, and 2) the map $(t, x) \mapsto \nabla v(t,x)h$ is continuous for all $h \in E$. 
\end{itemize}

Let $r > 0$ and recall that $\mathbf{C}_{r} \coloneqq C([-r, 0], \mathbb{R}^d)$ is a Polish space when endowed with the norm $|\xi|_r = \sup_{t \in [-r, 0]} |\xi_t|$. Note that its dual $\mathbf{C}_{r}^*$ is the space of $d$-tuples of finite Borel measures on $[-r, 0]$. Recall that $P_{s, t}$ is the transition semi-group of the path process $\X$ as defined in Proposition \ref{prop:FSDE_existence}. The total variation distance $d_{\rm{TV}, 0}$ between two measures on $\mathbf{C}_{r}$ is defined by:
\begin{equation*}
    d_{\rm{TV}, 0}(m, m') = \sup_{|\phi|_\infty\leq 1} \Big| \int_{\mathbf{C}_{r}} \phi(\chi) \, (m- m') (d\chi) \Big| \, ,
\end{equation*}
where the supremum is taken over all measurable functions $\phi : \mathbf{C}_{r} \rightarrow \mathbb{R}$ bounded by $1$.

\begin{assumption} \label{ass:diff}
\setcounter{assumptiontag}{0}

    \begin{itemize}
        \item[]

        \refstepcounter{assumptiontag}
        \item[] \hspace{-30pt} (\theassumptiontag) \label{diff_ass} \hspace{2pt} There exist two functions $\overline{b} : \mathbb{R} \times \mathbf{C}_{r} \rightarrow \mathbb{R}^d$ and $\overline{H} : \mathbb{R} \times \mathbf{C}_{r} \times \mathbb{R}^d \to \mathbb{R}$ such that $b(t, \xi) = \overline{b}(t, \xi_{|[-r, 0]})$ and $H(t, \xi, z) = \overline{H}(t, \xi_{|[-r, 0]}, z)$. Moreover, $b$ and $H$ satisfy Assumptions \ref{dissipativity_assumption} and \ref{hamiltonian_assumption}.
        
        \refstepcounter{assumptiontag}
        \item[] \hspace{-30pt} (\theassumptiontag) \label{feller_ass} \hspace{1pt} There exists an evolution system of measures $(m_t)_{t \in \R}$ for transition semi-group $P_{s,t}$ of the path process $\X$ and $\eta > 0$ such that, for all $\xi \in \mathbf{C}_{r}$, there is a constant $C_{\xi}$ satisfying:
        \begin{equation*}
            \begin{array}{cc}
                 \displaystyle d_{\rm{TV}, 0} (\delta_{\xi} P_{s, t} \, ,  m_t) \leq C_{\xi} \,  e^{- \eta (t-s)} \, , & \text{for all } \, s \leq t \, .
            \end{array}
        \end{equation*}
    \end{itemize}
\end{assumption}

A discussion of Assumption~\eqref{feller_ass} is given in Remarks~\ref{rmk:counterex_feller} and~\ref{rmk:markov_H52} below. We simply denote $\xi:=\xi_{|[-r, 0]}$ belongs to $\mathbf{C}_{r}$ and we write $b$ and $H$ instead of $\overline{b}$ and $\overline{H}$. Note that $\X$ is now viewed as a Markov process in $\mathbf{C}_{r}$. In what follows, $(Y^{\tau,T,\xi},Z^{\tau,T,\xi})$ denotes the solution of~\eqref{BSDE_finite_horizon}, and the finite-horizon decoupling field $(v^T,\zeta^T)$ is understood to be defined on $(-\infty,T]\times\mathbf{C}_r$. 
The next result is based on the work of \citeauthor{HJB_delay} \cite{HJB_delay}. Note that $\nabla v^T(t, \xi)$ belongs to $\mathbf{C}_{r}^*$ and $\nabla v^T(t, \xi)(\{0\})$ is the mass at zero of $\nabla v^T(t, \xi)$. 

\begin{lemma} \label{lem:diff}
    Under Assumption~\eqref{diff_ass}, there exists a constant $C>0$ such that $|Z^{\tau,T,\xi}_t| \leq C$, $d\mathbb{P} \otimes dt-$a.e. \hspace{-7pt} for any $ \tau\leq T$ and $\xi\in \mathbf{C}_{r}$.
\end{lemma}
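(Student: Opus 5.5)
The plan is to identify $Z$ with the Gâteaux derivative of the decoupling field in the direction of the mass at zero, and to bound that derivative by the Lipschitz estimate \eqref{eq:backward_lpz}.

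\textbf{Step 1: regular data.} First assume $b$ and $H$ are, in addition, Gâteaux differentiable in $(\xi,z)$ with continuous directional derivatives, i.e. $b(t,\cdot)\in\mathcal{G}^1(\mathbf{C}_r,\R^d)$ and $H(t,\cdot,\cdot)\in\mathcal{G}^1(\mathbf{C}_r\times\R^d,\R)$. In this case I use the results of \cite{HJB_delay} on forward-backward systems with delay. They show that $v^T\in\mathcal{G}^{0,1}((-\infty,T]\times\mathbf{C}_r,\R)$ and give the identification
\[
Z^{\tau,T,\xi}_t=\nabla v^T(t,\mathcal{X}^{\tau,\xi}_t)(\{0\}),\qquad d\mathbb{P}\otimes dt\text{-a.e.}
\]
This holds because the noise enters only through the present value $X_t$. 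The directional derivative in the direction $\mathbf{1}_{\{0\}}$ is understood through the measure $\nabla v^T(t,\xi)\in\mathbf{C}_r^*$ evaluated at the atom $\{0\}$, equivalently as a limit of derivatives along continuous approximations of $\mathbf{1}_{\{0\}}$.

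\textbf{Step 2: bound the mass at zero.} For any $h\in\mathbf{C}_r$, \eqref{eq:backward_lpz} of Proposition~\ref{prop:Markov_rep} gives
\[
|\nabla v^T(t,\xi)h|\le C|h|_r .
\]
The constant $C$ depends only on $\eta_1,\eta_2,\theta_0,\theta,L_H$, and not on $T$, $t$ or $\xi$. Here I use that $|h|_\theta\le|h|_r$ after extending $h$ constantly on $(-\infty,-r]$, which is harmless since $b$ and $H$ depend only on $\xi_{|[-r,0]}$. Hence the total variation of the measure $\nabla v^T(t,\xi)$ is at most $C$, and so $|\nabla v^T(t,\xi)(\{0\})|\le C$. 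Combined with Step~1, this gives $|Z^{\tau,T,\xi}_t|\le C$ uniformly.

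\textbf{Step 3: general data.} Remove the differentiability assumption by regularisation. Mollify $b$ and $H$ in the state variable, for instance by convolving in $z$ and composing in $\xi$ with a finite-dimensional smoothing of the path such as piecewise-linear interpolation followed by Gaussian convolution. The regularised $(b_n,H_n)$ must satisfy Assumptions~\ref{dissipativity_assumption} and~\ref{hamiltonian_assumption} with the same constants (up to an arbitrarily small loss), and converge pointwise to $(b,H)$. By Step~2 the corresponding $Z^{n}$ are bounded by the same $C$. Standard BSDE stability then gives $Z^n\to Z^{\tau,T,\xi}$ in $\mathbb{H}^2_{\tau,T}$; this uses $\mathcal{X}^n\to\mathcal{X}$ via a finite-horizon version of Proposition~\ref{prop:stability_b}, dominated convergence, and Lipschitz continuity in $z$. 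A subsequence converges $d\mathbb{P}\otimes dt$-a.e., so the bound passes to the limit.

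\textbf{Main obstacle.} I expect the hard part to be Step~1 together with the regularisation in Step~3. Specifically, I need to make sure that the identification of $Z$ with $\nabla v^T(\{0\})$ from \cite{HJB_delay} applies under the present assumptions, and that mollification preserves the dissipativity constants so that $C$ stays uniform in $n$. If preserving dissipativity is delicate, an alternative is to work with the dissipativity-free finite-horizon Lipschitz bound. On $[t,T]$, Gronwall gives a constant $e^{C(T-t)}$, but that does not suffice for uniformity in $T$. So I would insist on keeping the dissipativity structure, noting that convolution in $\xi_0$ against a symmetric kernel preserves the monotonicity inequality.
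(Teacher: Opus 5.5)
Steps 1 and 2 are exactly the paper's argument: the identification $Z^{\tau,T,\xi}_t=\nabla v^T(t,\X^{\tau,\xi}_t)(\{0\})$ from \cite{HJB_delay}, followed by the total-variation bound from \eqref{eq:backward_lpz}. You are also right that the approximating coefficients must stay dissipative, since that is the only source of a $T$-independent constant in Step 2.

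The gap is that the regularisation you propose does not preserve Assumption~\eqref{dissipativity_condition}. The Gaussian convolution part is harmless. Averaging $b(t,\xi+\phi)$ over a law of $\phi$ leaves $\xi_0-\xi'_0$ and $\xi-\xi'$ unchanged, so both sides of the inequality are preserved exactly. Symmetry of the kernel plays no role; translation invariance is what matters.

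The problem is the composition with nodal piecewise-linear interpolation $I_n$. The memory term becomes $\eta_2\int e^{2\theta_0 s}|I_n(\xi-\xi')(s)|^2\,ds$, and point values $(\xi-\xi')(t_j)$ are not controlled by the weighted $L^2$ norm of $\xi-\xi'$. For example, take $b(t,\xi)=-a\xi_0+c\int_{-r}^0 k(s)\xi_s\,ds$ with $k\ge0$, $c>0$ and $a$ large; it satisfies Assumption~\ref{dissipativity_assumption}. Your $b_n$ then contains the discrete-delay term $c\sum_j\omega_j\xi(t_j)$, with $\omega_j=\int_{-r}^0 k\,\phi_j\,ds\ge0$ and $\phi_j$ the hat functions. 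Take $\xi-\xi'$ equal to a small $\delta>0$ at $0$ and to narrow unit spikes at the other nodes. Then the left-hand side is close to $2c\delta\sum_j\omega_j>0$, while the right-hand side is close to $-\eta_1\delta^2$. So $b_n$ is not dissipative for any choice of constants. Without dissipativity, Step 2 applied to $(b_n,H_n)$ only gives a Gronwall-type bound that grows with $T$, which is useless for Theorem~\ref{thm:canonical_representation}.

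The step can be repaired as follows.
\begin{itemize}
\item Keep the translation averaging, but compose $b$ with a finite-rank linear map $S_n$. It keeps $\xi_0$ exactly through a cutoff supported in $[-\delta,0]$, and elsewhere uses cell averages $h^{-1}\int_{I_j}\xi_s\,ds$, interpolated linearly, instead of point values.
\item Then $(S_n\Delta)_0=\Delta_0$ and $|S_n\Delta|_\theta\le e^{2\theta h}|\Delta|_\theta$. Convexity and Jensen give $\int_{-r}^0 e^{2\theta_0 s}|(S_n\Delta)_s|^2ds\le \delta|\Delta_0|^2+e^{4\theta_0 h}\int_{-\infty}^0 e^{2\theta_0 s}|\Delta_s|^2ds$.
\item Hence $b_n$ is dissipative with constants $(\eta_1-\eta_2\delta,\,e^{4\theta_0 h}\eta_2)$. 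These are still admissible for small $h,\delta$ because the constraint on $(\eta_1,\eta_2)$ is strict.
\item Such $b_n$ converge to $b$ only locally uniformly. Proposition~\ref{prop:stability_b}, which needs $\sup_\xi|b-b_n|$ small, must therefore be replaced by a pathwise Gronwall argument on $[\tau,T]$, using compactness of $\{\X^{\tau,\xi}_t(\omega)\}_{t\le T}$.
\end{itemize}

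The paper proceeds differently at this point. It takes uniform approximants from Bogachev--Shkarin, so Proposition~\ref{prop:stability_b} with $K=\mathbf{1}_{[0,T)}$ and $\delta b=\varepsilon$ applies directly and gives explicit $O(\varepsilon)$ rates for $\tilde Y-Y$ and $\tilde Z-Z$. However, its remark that dissipativity of $\tilde b$ is not needed refers only to well-posedness. The $T$-independent bound on $\tilde Z$ that it then invokes also requires dissipativity, so the issue you raised must be handled on that route too. It is automatic for approximants obtained by averaging over translations.
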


\begin{proof}
    Without loss of generality, we only treat the case $\tau = 0$ and omit it in the notations. We first assume that for all $t \in\R$, $b(t, \cdot) \in \mathcal{G}^1(\mathbf{C}_{r})$ and $H(t, \cdot, \cdot) \in \mathcal{G}^1(\mathbf{C}_{r} \times \mathbb{R}^d)$. Then, by \citeauthor{HJB_delay} \cite[Proposition~4.2]{HJB_delay} the map $v^T$ defined in Proposition \ref{prop:Markov_rep} belongs to $\mathcal{G}^{0,1}([0, T]\times \mathbf{C}_{r})$ for all $T \geq 0$, and we have:
    \begin{equation*}
        \begin{array}{cccc}
             \zeta^T(t, \X_t^{\xi}) =  Z_t^{T, \xi} = \nabla v^T(t, \mathcal{X}^{\xi}_t)(\{0\}) \, , & \text{for all } \,  0 \leq t \leq T \, \text{ and } \, \xi \in \mathbf{C}_{r} \, .
        \end{array}
    \end{equation*}
    By the uniform Lipschitz estimate in Proposition~\ref{prop:Markov_rep},
there exists a constant $C>0$, independent of $t,T,\xi$, such that the total mass of the measure
$\nabla v^T(t,\xi)$ is bounded by $C$. In particular,
$|\nabla v^T(t,\xi)(\{0\})| \le C$, thus $Z^{T, \xi}$ is bounded by $C$.
In the general case when $b$ and $H$ are only Lipschitz continuous on $\mathbf{C}_{r}$ and $\mathbf{C}_{r} \times \mathbb{R}^d$ respectively with constant $L=L_b\vee L_H$, for all $\varepsilon>0$, there exist two measurable functions $\tilde{b}$ and $\tilde{H}$ satisfying $\sup_{t,\xi}|b(t,\xi)- \tilde{b}(t,\xi)| + \sup_{t,\xi,z}|H(t,\xi,z) - \tilde{H}(t,\xi,z)| < \varepsilon$, with $\tilde{b}(t,\cdot)$ and $\tilde{H}(t,\cdot,\cdot)$ being $L$-Lipschitz and Gâteaux differentiable on $\mathbf{C}_{r}$ and $\mathbf{C}_{r} \times \mathbb{R}^d$. This can be proved by adapting the arguments of \citeauthor{Lpz_approx} \cite[Theorem 7]{Lpz_approx}, noting that $\mathbf{C}_{r}$ is a separable space. Let $(\tilde{\X}^{\xi}, \tilde{Y}^{T, \xi}, \tilde{Z}^{T, \xi})$ be the solution of the forward-backward system with parameters $(\tilde{b}, \tilde{H})$. Note that we do not need $\tilde{b}$ to satisfy the dissipativity condition for the well-posedness of $(\tilde{Y}^{T, \xi}, \tilde{Z}^{T, \xi})$. For $T \geq 0$, one has:
    \begin{align}
        \tilde{Y}^{T, \xi}_{0} - Y^{T, \xi}_{0} &= \int_{0}^T \hspace{-7pt} \big( \tilde{H}(s, \tilde{\X}_s^{\xi}, Z^{T, \xi}_s) - \tilde{H}(s, \X_s^{\xi}, Z^{T, \xi}_s) + \tilde{H}(s, \X_s^{\xi}, Z^{T, \xi}_s) - H(s, \X_s^{\xi}, Z^{T, \xi}_s) \big) \, ds \nonumber \\
        &\quad \quad \quad - \int_{0}^T (\tilde{Z}^{T, \xi}_s - Z^{T, \xi}_s) \cdot dW^{\beta}_s \, , \nonumber
    \end{align}
    where $\beta_s = \frac{\tilde{H}(s, \tilde{\X}_s^{\xi}, \tilde{Z}^{T, \xi}_s) - \tilde{H}(s, \tilde{\X}_s^{\xi}, Z^{T, \xi}_s)}{|\tilde{Z}^{T, \xi}_s - Z^{T, \xi}_s|^2} \mathbf{1}_{\{ \tilde{Z}^{T, \xi}_s \neq Z^{T, \xi}_s \}}$ is bounded by $L$. Taking the expectation $\mathbb{E}^{\beta}$ by Lemma~\ref{lem:girsanov_intZ_is_mg} and applying Proposition \ref{prop:stability_b} with the kernel $K=\mathbf{1}_{[0,T)}$ and $\delta b=\varepsilon$, we obtain a constant $C_1 > 0$ such that:
    \begin{align}\label{eq:difference_v^T}
        |v^T(0, \xi) - \tilde{v}^T(0, \xi)| \, \leq \,  C_1 \varepsilon (\sqrt{T} + T) \, \leq \,  2C_1\varepsilon \big(T\vee 1 \big) \, . 
    \end{align}
    Following a similar argument as in the proof of Theorem \ref{thm:existence_uniqueness}, we can write by Itô's formula:
    \begin{align}
       \mathbb{E} \int_{0}^T |\tilde{Z}^{T, \xi}_s - Z^{T, \xi}_s|^2 \, ds &\leq 2\mathbb{E} \int_{0}^T (\tilde{Y}^{ T, \xi}_{s} - Y^{T, \xi}_{s}) \big( \tilde{H}(s, \tilde{\X}^{\xi}, \tilde{Z}^{T, \xi}) - \tilde{H}(s, \tilde{\X}^{\xi}, Z^{T, \xi}) \big) \, ds  \nonumber \\
        &  \quad \quad \quad +2 \mathbb{E} \int_{0}^T (\tilde{Y}^{T, \xi}_{s} - Y^{T, \xi}_{s}) \big(\tilde{H}(s, \tilde{\X}_s^{\xi}, Z^{T, \xi}_s) - \tilde{H}(s, \X_s^{\xi}, Z^{T, \xi}_s) \big) \, ds \nonumber \\
        & \quad \quad \quad +2 \mathbb{E} \int_{0}^T (\tilde{Y}^{T, \xi}_{s} - Y^{T, \xi}_{s}) \big(\tilde{H}(s, \X_s^{\xi}, Z^{T, \xi}_s) - H(s, \X_s^{\xi}, Z^{T, \xi}_s) \big) \, ds \nonumber
    \end{align}
    Note that, similarly to \eqref{eq:difference_v^T}, we have $|\tilde{Y}^{T, \xi}_{s} - Y^{T, \xi}_{s}| \leq 2 C_1 \varepsilon (T\vee 1 )$. By Young's inequality and the same computations as above, we obtain a constant $C_2 > 0$ such that:
    \begin{equation*}\label{eq:Z_estimate_Y_finite_delay}
    \mathbb{E} \int_{0}^T \big|\tilde{Z}^{T, \xi}_s - Z^{T, \xi}_s \big|^2 ds \leq C_2 \varepsilon^2 \big(T^3\vee 1 \big) \, .
    \end{equation*}
    Replacing $\varepsilon$ by a sequence $\varepsilon_n \rightarrow 0$, we can construct a family $(Z^n)_{n}$ of random processes bounded by $C$ converging to $Z^{T, \xi}$ in $\mathbb{H}^2_{T}(\R^d)$. We conclude that $Z^{T, \xi}$ is also bounded by $C$.
\end{proof}

\vspace{5pt}
The following result provides a canonical representation of the function $\Lambda$.

\begin{theorem}\label{thm:canonical_representation}
     Assume that there exists a measurable function $\zeta : \R \times \mathbf{C}_r \to \R^d$ such that $Z^{\tau, \xi}_t= \zeta(t, \X^{\tau, \xi}_t)$ for any $t \geq \tau$ and $\xi \in \mathbf{C}_r$, where $(Y^{\tau}, Z^{\tau}, \Lambda)\in\mathbf{E}_{\tau}(\mathbf{C}_r) \times {\rm{UC}}_{\tau}$ is the solution of the ergodic backward SDE constructed in Theorem \ref{thm:existence_uniqueness} and Remark~\ref{rmk:ebsde_flow_path}. Let
     \begin{equation*}\label{eq:lambda_rep}
         \begin{array}{cc}
              \displaystyle \overline{\Lambda}(T) = \int_0^T \int_{\mathbf{C}_{r}} H(t, \chi, \zeta(t, \chi)) \,  m_t(d\chi) dt 
              ~~\mbox{and}~~
              \overline{Y}^\tau_T:=Y^\tau_T-\Lambda(T)+\overline{\Lambda}(T),
              & \, T \in \R \, .
         \end{array}
     \end{equation*}
Then, under Assumption \ref{ass:diff}, $(\overline Y,Z,\overline\Lambda)\in\mathbf{E}_{\tau}(\mathbf{C}_r) \times {\rm{UC}}_{\tau}$ is a solution of the ergodic backward SDE.
\end{theorem}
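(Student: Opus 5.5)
By Remark~\ref{rmk:smooth_repre}, the triple $(\overline Y,Z,\overline\Lambda)$ automatically satisfies the backward equation~\eqref{eq:ebsde_flow_path}, and it satisfies the growth bound~\eqref{Y_decompsition} up to an additive constant, as soon as $\sup_{T\ge\tau}|\Lambda(T)-\overline\Lambda(T)|<\infty$. Indeed, $\overline Y-Y=\overline\Lambda-\Lambda$ is deterministic, so the equation is unchanged. The bound~\eqref{Y_decompsition} and the membership in $\mathbf{E}_\tau$ are preserved by adding a bounded function. Finally, $\overline\Lambda\in{\rm UC}_\tau$, since $\overline\Lambda'$ is bounded: $|H(t,\chi,\zeta)|\le L_H(1+|\zeta|)$ and $\zeta$ is bounded. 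The plan is therefore to prove that $\Lambda-\overline\Lambda$ is bounded on $[\tau,\infty)$.

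Fix $s\le T$ and $\xi\in\mathbf{C}_r$, and take expectations under $\mathbb{P}$ in~\eqref{eq:ebsde_flow_path} with initial time $s$. The stochastic integral is a true martingale because $Z^{s,\xi}\in\mathbb{H}^2_{s,T}$. This gives
\[
\Lambda(T)-\Lambda(s)=\E\big[Y^{s,\xi}_T\big]-v(s,\xi)+\int_s^T \E\big[H(t,\X^{s,\xi}_t,\zeta(t,\X^{s,\xi}_t))\big]\,dt .
\]
The first two terms are bounded uniformly in $T$, with a bound depending on $\xi$. This follows from the linear growth~\eqref{Y_decompsition} together with $\sup_t\E|\X^{s,\xi}_t|_\theta<\infty$ from Proposition~\ref{prop:FSDE_existence}. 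It then remains to compare $\int_s^T \E[g_t(\X^{s,\xi}_t)]\,dt$ with $\int_s^T m_t(g_t)\,dt$, where $g_t(\chi):=H(t,\chi,\zeta(t,\chi))$.

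For this comparison I need $g_t$ to be bounded uniformly in $(t,\chi)$. Boundedness of $\zeta$ follows from Lemma~\ref{lem:diff}, since $|Z^{\tau,T,\xi}|\le C$ uniformly and $Z^{\tau,T_n,\xi}\to Z^{\tau,\xi}$ in $\mathbb{H}^2$. Hence $|Z^{\tau,\xi}|\le C$, and one may modify $\zeta$ so that it is bounded, since only the values of $\zeta$ along the process matter. Together with $|H(t,\chi,0)|\le L_H$, this yields $|g_t|_\infty\le L_H(1+C)=:M$. Assumption~\eqref{feller_ass} then gives
\[
\big|\E[g_t(\X^{s,\xi}_t)]-m_t(g_t)\big|\le M\,d_{\rm TV,0}(\delta_\xi P_{s,t},m_t)\le M C_\xi e^{-\eta(t-s)},
\]
which is integrable over $[s,\infty)$. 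Combining the two steps with $s=\tau$ and a fixed $\xi$ shows that $\Lambda(T)-\Lambda(\tau)-\overline\Lambda(T)+\overline\Lambda(\tau)$ is bounded in $T\ge\tau$. This gives $\sup_{T\ge\tau}|\Lambda-\overline\Lambda|<\infty$, and the conclusion follows from Remark~\ref{rmk:smooth_repre}.

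The main obstacle is a measurability issue in the middle step. I must check that $\chi\mapsto g_t(\chi)$ is measurable, and that the value $\E[g_t(\X_t)]$ does not depend on the choice of the version of $\zeta$, so that the comparison with $m_t$ is legitimate. The $\mathbf{C}_r$-setting with Markov path process, provided by Assumption~\eqref{diff_ass}, handles the identification $Z_t=\zeta(t,\X_t)$. The subtle point is that $m_t$ might charge sets where $\zeta$ is modified, so $m_t(g_t)$ could a priori depend on the version chosen. I would address this by taking $\zeta$ bounded via truncation, and by noting that only the bound $|g_t|\le M$ enters the total variation estimate. Then $\overline\Lambda$ is defined with whichever bounded version is fixed, and the argument above applies to it.
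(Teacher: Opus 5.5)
Your proposal follows the paper's proof closely. You reduce to boundedness of $\Lambda-\overline\Lambda$ via Remark~\ref{rmk:smooth_repre}, take $\mathbb{P}$-expectations in \eqref{eq:ebsde_flow_path}, control $\E[Y_T]$ by the linear growth bound, bound $Z$ through Lemma~\ref{lem:diff} and $\HH$-convergence, and compare $\E[g_t(\X^{s,\xi}_t)]$ with $m_t(g_t)$ using Assumption~\eqref{feller_ass}. Taking the initial time equal to $\tau$ also covers the whole half-line $[\tau,\infty)$ directly, where the paper treats $T\ge 0$ and $T\le 0$ separately. There is, however, a real (if small) gap at exactly the point you flag as the main obstacle. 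Your remedy does not close it.

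The bound $|Z^{s,\xi}|\le C$ only gives $|g_t|\le M$ almost surely along the process, i.e.\ $\delta_\xi P_{s,t}$-a.e. But $\overline\Lambda$ integrates $g_t$ against $m_t$. So the sentence ``only the values of $\zeta$ along the process matter'' is precisely what fails for $\overline\Lambda$. Truncating $\zeta$ and then defining $\overline\Lambda$ with the truncated version proves the claim for a different function than the one in the statement, which is built from the given $\zeta$. Without further argument, that function need not even be finite or uniformly continuous. Your total variation estimate also needs $|g_t|\le M$ to hold $m_t$-a.e.

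The missing step, which the paper supplies, is to use Assumption~\eqref{feller_ass} once more: it gives $d_{\rm TV,0}(\delta_\xi P_{s,t},m_t)\to 0$ as $s\to-\infty$. Take $s_n\downarrow-\infty$. Since $|Z^{s_n,\xi}|\le C$ holds $d\mathbb{P}\otimes dt$-a.e., with a constant uniform in the initial time by Lemma~\ref{lem:diff}, we get for a.e.\ $t$ (a countable union of $dt$-null sets) that $\delta_\xi P_{s_n,t}\bigl(\{|\zeta(t,\cdot)|>C\}\bigr)=\mathbb{P}\bigl(|Z^{s_n,\xi}_t|>C\bigr)=0$ for all $n$ with $s_n\le t$. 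Testing the total variation distance with an indicator function then gives $m_t\bigl(\{|\zeta(t,\cdot)|>C\}\bigr)\le C_\xi e^{-\eta(t-s_n)}\to 0$. Hence $|g_t|\le L_H(1+C)$ holds $m_t\otimes dt$-a.e.\ for the \emph{original} $\zeta$. This makes $\overline\Lambda$ well defined, Lipschitz (so in ${\rm UC}_\tau$) and independent of the version of $\zeta$. With this observation in place of the truncation, the rest of your argument goes through unchanged.
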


\begin{proof}
    Following Remark \ref{rmk:smooth_repre}, we only need to prove that $\Lambda - \overline{\Lambda}$ is bounded. By construction, $Z^{\tau, T_n, \xi} \rightarrow Z^{\tau, \xi}$ in $\mathbb{H}^2_{\tau, T}(\mathbb{R}^d)$ for all $T\geq\tau$, so we deduce by Lemma \ref{lem:diff} that $|Z^{\tau, \xi}| \leq C$ for some constant $C >0$. Thus, by Assumption \ref{hamiltonian_assumption}, the process $H(\cdot, \X^{\tau, \xi}_\cdot, \zeta(\cdot, \X^{\tau, \xi}_\cdot))$ is bounded by $L_H(C+1)$, $d\mathbb{P} \otimes dt-$a.e. By the mixing property \eqref{feller_ass}, $\delta_{\xi} P_{\tau, t} \rightarrow m_t$ in total variation as $\tau \to -\infty$, so the function $(t,\chi) \mapsto H(t, \chi, \zeta(t, \chi))$ is bounded by $L_H(C+1)$, $dm_t \otimes dt-$a.e. Moreover, for  $T\geq 0$ and $\xi \in \mathbf{C}_{r}$, $\mathbb{E} [Y_0^{0,\xi} - Y_T^{0,\xi}] - \Delta_{\rm{TH}} = \overline{\Lambda}(T) - \Lambda(T)$, where:
    \begin{equation*}
        \Delta_{\rm{TH}} = \mathbb{E} \int_0^T \Big( H \big(t, \mathcal{X}^{0,\xi}_t, \zeta(t, \mathcal{X}_t^{0,\xi})\big) - \int_{\mathbf{C}_{r}} H(t, \chi, \zeta(t, \chi)) \,  m_t(d\chi) \Big) \, dt \, .
    \end{equation*}
    The term $\mathbb{E} [Y_{0}^{\xi} - Y_T^{\xi}]$ is bounded since $Y$ has linear growth in $\X$ and by Proposition~\ref{prop:forward_holder}. Moreover,
    \begin{align*}
        \Delta_{\rm{TH}} \, \leq \,  \int_0^T\Big|\int_{\mathbf{C}_r} H(t,\chi, \zeta(t, \chi)) \, \delta _{\xi}P_{0,t}(d\chi) - \int_{\mathbf{C}_r} H(t,\chi, \zeta(t, \chi)) \, m_t(d\chi) \Big|  \, dt   \, \leq \,  C'_{\xi} \int_0^T e^{-\eta t} \, dt \,\leq \,  \frac{C'_{\xi}}{\eta}  \, ,
    \end{align*}
    where we used \eqref{feller_ass} and $C'_\xi=C_\xi L_H(C+1)$. For $T\leq0$, a similar proof works if we consider the solution of ergodic backward SDE starting at time $T$ and estimate the integral from $T$ to $0$ instead.
\end{proof}

\begin{remark}
    In Theorem \ref{thm:canonical_representation}, we assume the existence of a Markovian representation function $\zeta$ of $Z$. By Proposition \ref{prop:ebsde_flow_uniqueness} and Remark~\ref{rmk:sigma_compact}, the assumption may not be ensured generally for path-dependent case. However, if for all $t \in\R$, $b(t, \cdot) \in \mathcal{G}^1(\mathbf{C}_{r})$ and $H(t, \cdot, \cdot) \in \mathcal{G}^1(\mathbf{C}_{r} \times \mathbb{R}^d)$, we can show, using similar arguments as in \citeauthor{furhman_ergodic} \cite[Theorem~5.1]{furhman_ergodic}, along with \citeauthor{HJB_delay} \cite[Theorem~3.1]{HJB_delay}, that $v \in \mathcal{G}^{0,1}([\tau, T] \times \mathbf{C}_r)$ for all $\tau \leq T \in \R$, and $Z^{\tau, \xi}_t = \nabla v(t, \X^{\tau, \xi}_t)(\{0\})$, $d\mathbb{P} \otimes dt-$a.e. In this case $\zeta(t,x)=\nabla v(t,x)(\{0\})$.
\end{remark}

\begin{remark}\label{rmk:counterex_feller}
    The existence of an evolution measure family $(m_t)_{t \in \R}$ is given in Proposition \ref{prop:FSDE_existence}. However, it turns out that the mixing property of Assumption \eqref{feller_ass} cannot hold for systems with infinite delay with $\xi \in \C_{\theta}$. If there exists $(m_t)_{t\in\R}$ satisfying Assumption \eqref{feller_ass}, define the function $\phi(\xi) = {\rm{sgn}} (\lim_{s \to - \infty} e^{\theta s} \xi_s)$. Let $\xi(s) = -e^{\theta s}$ and $\xi'(s) = e^{\theta s}$. Then, for all $s \leq t$, noting that $P_{s,t}[\phi](\xi) = \mathbb{E}[\phi(\X_t^{s, \xi})] = -1$ and $P_{s,t}[\phi](\xi') = \mathbb{E}[\phi(\X_t^{s, \xi'})] = 1$, we have:
    \begin{equation*}
    \begin{array}{ccc}
         \displaystyle |1+m_t(\phi) | \, \le \, d_{\rm{TV},0} (\delta_{\xi} P_{s, t},  m_t) \, \le \, C_{\xi} \, e^{-\eta (t-s)},\quad  |1 -m_t(\phi) | \, \le \, d_{\rm{TV},0} (\delta_{\xi'} P_{s, t},  m_t) \, \le \, C_{\xi'} \, e^{-\eta (t-s)} \, .
    \end{array}
    \end{equation*}
    Sending $s$ to $-\infty$, we obtain $m_t(\phi) = -1$ and $m_t(\phi) = 1$ respectively, which is contradiction. Note that the exponential mixing property for a time-homogeneous system with infinite delay was proved for the Wasserstein distance in \citeauthor{Ergodicity_neutral_SDE} \cite{Ergodicity_neutral_SDE}, but applying this result would require $\zeta$ to be at least Lipschitz continuous, which we believe is too strong in this context. For systems with finite delay, the condition \eqref{feller_ass} was studied in \citeauthor{Expo_mixing_finite_delay} \cite{Expo_mixing_finite_delay} under different assumptions. 
\end{remark}

\begin{remark}\label{rmk:markov_H52}
    Assumption~\eqref{feller_ass} always holds when the state dynamics is Markovian. More precisely, let $b(t, \xi) = \overline{b}(t, \xi_0)$, where $\overline{b}$ satisfies Assumption \ref{time_inhom_assumption}. By Proposition \ref{prop:ergodicity}, there exists a unique evolution system of measures $(m_t)_{t \in \R}$ satisfying $\sup_{t \in \R} m_t(|\cdot|^2) < \infty$. Let $\tilde{X}^{\tau}$ be the unique weak solution of \eqref{SDE_variant_bounded_measurable} on $[\tau, \infty)$ with $\Gamma \equiv 0$ and $\tilde{X}^{\tau}_{\tau} \sim m_{\tau}$ and define $\tilde{m}_t = \mathcal{L}(\tilde{\X}^{\tau}_t)$, which is a probability measure on $\mathbf{C}_r$. Then, with a slight abuse of notation for $d_{\rm{TV},0}$, we have, for all $t\geq \tau$,
    \begin{equation*}
        d_{\rm{TV},0}(\mathcal{L}(\X^{\tau,\xi}_{t+r}),\tilde{m}_{t+r}) \, = \, d_{\rm{TV},0}(\mathcal{L}(X^{\tau,\xi}_{t}),m_{t}) \, \leq \, d_{\rm{TV},2}(\mathcal{L}(X^{\tau,\xi}_{t}),m_{t}) \, ,
    \end{equation*}
    The first identity follows because, on the one hand, $\mathcal{L}(X^{\tau,\xi}_{t})$ is the projection of $\mathcal{L}(\X^{\tau,\xi}_{t+r})$ on $\mathcal{P}(\R^d)$ at time $t$ ; on the other hand, $\mathcal{L}(\X^{\tau,\xi}_{t+r})$ is generated by $\mathcal{L}(X^{\tau,\xi}_{t})$ together with the transition kernel of the state dynamics. Finally, by Proposition~\ref{prop:ergodicity} and the fact that $m_{\tau} P_{\tau, t} = m_t$, we have $d_{\rm{TV},2}(\mathcal{L}(X^{\tau,\xi}_{t}),m_{t})\leq C_\xi e^{-\eta(t-\tau)}$. We provide an explicit example below.
\end{remark}

\begin{example}\label{ex:OU_mixing}
    Let $X^{s, x}$ be a time-inhomogeneous Ornstein-Uhlenbeck process satisfying the dynamics \eqref{eq:OU_non_homog} with initial condition $(s, x) \in \mathbb{R} \times \mathbb{R}$. Suppose that $\eta(t) \geq \eta_0$ for some $\eta_0 > 0$. By Example \ref{ex:OU_evolution_system}, we have $\delta_x P_{s, t} = \mathcal{N}(x \, e^{-S_{s, t}}, \Sigma_{s, t})$ and $m_t = \mathcal{N}(0, \Sigma_{-\infty, t})$ for $s \leq t$. By Pinsker's inequality, one has $d_{\rm{TV}, 0}(\delta_x P_{s, t} \, ,   m_t) \leq \sqrt{2 \, \text{D}_{\text{KL}}(\delta_x P_{s, t} \, ,  m_t)}$, where $\text{D}_{\text{KL}}(P, P') \coloneqq \mathbb{E}^{P}[\ln(\frac{dP}{dP'})]$ is the Kullback–Leibler divergence of $P$ with respect to $P'$. By Gaussian calculus, we obtain:

    \vspace{-8pt}
    
    \begin{equation*}
        2 \, \text{D}_{\text{KL}}(\delta_x P_{s, t} \, , m_t) = \log \big(\frac{\Sigma_{-\infty, t}}{\Sigma_{s, t}} \big) + \frac{\Sigma_{s, t} + |x|^2 e^{-2 S_{s, t}}}{ \Sigma_{-\infty, t}} - 1 \, .
    \end{equation*}
    Note that $S_{s, r} \geq \eta_0 (r-s)$ if $r \geq s$ and $S_{s, r} \leq \eta_0(r-s)$ if $r \leq s$. On one hand, because $S_{r, t} = S_{s, t} - S_{s, r}$, we have:
    \begin{equation*}
        \frac{\Sigma_{-\infty, t}}{\Sigma_{s, t}} \, = \, 1 + \frac{\int_{-\infty}^s e^{-2 S_{r, t}} \, dr}{\int_s^t e^{-2 S_{r, t}} \, dr} \,  = \,  1 + \frac{\int_{-\infty}^s e^{2 S_{s, r}} \, dr}{\int_s^t e^{2 S_{s, r}} \, dr} \, \leq \,  1 + \frac{1}{e^{2 \eta_0 (t-s)} - 1} \, .
    \end{equation*}
    On the other hand, as $\Sigma_{s, t} \leq \Sigma_{-\infty, t}$, we obtain:
    \begin{align}
        \frac{\Sigma_{s, t} + |x|^2 e^{-2 S_{s, t}}}{ \Sigma_{-\infty, t}} - 1 \, \leq \,  \frac{|x|^2 e^{-2 S_{s, t}}}{ \Sigma_{-\infty, t}} \leq \frac{|x|^2}{\int_{-\infty}^t e^{2 S_{s, r}} \, dr} \, \leq \,  \frac{|x|^2}{\int_s^t e^{2 S_{s, r}} \, dr} \, \leq \,  \frac{\eta_0 |x|^2}{e^{2 \eta_0 (t-s)} - 1} \, . \nonumber 
    \end{align}
    Finally, using the inequality $\log(1 + y) \leq y$, and posing $C=\sqrt{\frac{1+\eta_0}{1-e^{-2\eta_0}}},$ we have for $t-s\geq 1$:
    \begin{equation*}
        d_{\rm{TV}, 0}(\delta_x P_{s, t} \, , m_t) \leq C(1 + |x|) e^{-\eta_0 (t-s)} \, .
    \end{equation*}
\end{example}

\appendix
\section{Appendix}
\subsection{Kernels and ergodic values} \label{app:ergodic_values}

In the following, we discuss the relationship between the ergodic values of Definition \ref{def:values}. We start with two intermediate results on the properties of the kernels.

\begin{lemma}\label{lemma:upper-sandwich}
Let $K\in\K$, and let
$\lambda\in L^1_{\mathrm{loc}}(\R_+)$ be bounded from below. Then:
$$\displaystyle
\limsup_{\rho\downarrow0}\int_0^\infty K_{\rho}(t)\lambda(t)\,dt
\, \le \,
\limsup_{T\to\infty}\frac{1}{T}\int_0^T\lambda(t)\,dt \, .$$
\end{lemma}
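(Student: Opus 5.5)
The plan is to write $\int_0^\infty K_\rho(t)\lambda(t)\,dt$ as an average of the uniform averages $\frac1T\int_0^T\lambda$ against a probability measure built from $-dK$, and then let $\rho\downarrow0$, which pushes that measure towards large $T$. Set $L:=\limsup_{T\to\infty}\frac1T\int_0^T\lambda(t)\,dt$; if $L=+\infty$ there is nothing to prove, and $L\ge \inf\lambda>-\infty$, so assume $L$ finite. Replacing $\lambda$ by $\lambda-m$ with $m$ a lower bound, and using $\int_0^\infty K_\rho\,dt=1$, we may take $\lambda\ge0$. Then all integrals below are in $[0,\infty]$ and Tonelli applies freely.

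Put $\Phi(T):=\int_0^T\lambda(t)\,dt$. Since $K_\rho$ is non-increasing, càdlàg and tends to $0$ at infinity, we have $K_\rho(t)=\int_{(t,\infty)}d(-K_\rho)(s)$ for all $t$. Tonelli then gives
\[
\int_0^\infty K_\rho(t)\lambda(t)\,dt=\int_{(0,\infty)}\Phi(s)\,d(-K_\rho)(s)=\int_{(0,\infty)}\frac{\Phi(s)}{s}\,\mu_\rho(ds),
\]
where $\mu_\rho(ds):=s\,d(-K_\rho)(s)$. This is a probability measure, because $\int_{(0,\infty)} s\,d(-K)(s)=\int_0^\infty K(t)\,dt=1$ (again by Tonelli), and the scaling $K_\rho=\rho K(\rho\cdot)$ shows that $\mu_\rho$ is the image of $\mu_1$ under $s\mapsto s/\rho$.

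Now fix $\varepsilon>0$ and choose $T_\varepsilon$ such that $\Phi(s)/s\le L+\varepsilon$ for $s\ge T_\varepsilon$. Split the integral at $T_\varepsilon$. The tail part is bounded by $(L+\varepsilon)\mu_\rho([T_\varepsilon,\infty))\le L+\varepsilon$, using $L+\varepsilon\ge0$. The head part is bounded by
\[
\int_{(0,T_\varepsilon)}\Phi(s)\,d(-K_\rho)(s)\le \Phi(T_\varepsilon)\,\big(K_\rho(0)-K_\rho(T_\varepsilon)\big)\le \Phi(T_\varepsilon)\,\rho K(0),
\]
since $\Phi$ is non-decreasing. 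This term tends to $0$ as $\rho\downarrow0$ because $K(0)<\infty$. Hence $\limsup_{\rho\downarrow0}\int_0^\infty K_\rho\lambda\,dt\le L+\varepsilon$, and letting $\varepsilon\to0$ gives the claim.

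The main obstacle is justifying the Stieltjes/Tonelli manipulations when $K$ has jumps, in particular the identity $\int_{(0,\infty)}s\,d(-K)(s)=1$, which needs $K(\infty)=0$. This is handled by the nonnegativity reduction and by $K$ being non-increasing and integrable. It also requires $K(0)<\infty$, which holds because $K$ is a real-valued function on $\R_+$.
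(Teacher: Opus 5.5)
Your proof is correct and follows essentially the same route as the paper's. Both use Tonelli to rewrite $\int_0^\infty K_\rho\lambda\,dt$ as an average of the Ces\`aro means $\frac1s\int_0^s\lambda$ against the probability measure $s\,(-dK)(ds)$, split at the threshold $T_\varepsilon$, and bound the head by $\rho K(0)\int_0^{T_\varepsilon}\lambda\,dt$ via monotonicity of the primitive. The only difference is cosmetic: you work with the rescaled measure $\mu_\rho$ and split at $T_\varepsilon$, whereas the paper keeps the fixed measure $\nu(ds)=s\,\mu(ds)$, evaluates at $A(s/\rho)$, and splits at $\rho T_\varepsilon$.
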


\begin{proof}
Subtracting a lower bound of $\lambda$ from $\lambda$ shifts both sides by
the same constant. We may therefore assume that $\lambda\geq0$. Set
$A(T):=\frac{1}{T}\int_0^T\lambda(t)\,dt$ and
$L:=\limsup_{T\to\infty}A(T)$. If $L=+\infty$, there is nothing to prove.
Otherwise, for $\varepsilon>0$, there exists $T_\varepsilon>0$ such that
$A(T)\leq L+\varepsilon$ for all $T\geq T_\varepsilon$. Since $K\in\K$, the measure $\mu:=-dK$ is a positive Stieltjes measure on
$\R_+$. Moreover, integration by parts gives
$\int_0^\infty s\,\mu(ds)=\int_0^\infty K(t)\,dt=1$. Thus,
$\nu(ds):=s\,\mu(ds)$ defines a probability measure on $\R_+$. Since
$\lambda\geq0$, Tonelli's theorem yields
$$\displaystyle
\int_0^\infty K_{\rho}(t)\lambda(t)\,dt
=
\int_0^\infty \rho\lambda(t)\int_{(\rho t,\infty)}\mu(ds)\,dt
=
\int_0^\infty A\!\left(\frac{s}{\rho}\right)\nu(ds) \, .$$
Splitting the last integral at $\rho T_\varepsilon$ and using the
monotonicity of $T\mapsto TA(T)=\int_0^T\lambda(t)\,dt$, we obtain
\begin{align*}
\int_0^\infty K_{\rho}(t)\lambda(t)\,dt
\leq
\rho T_\varepsilon A(T_\varepsilon)
\mu\bigl((0,\rho T_\varepsilon]\bigr)
+
(L+\varepsilon)\nu\bigl((\rho T_\varepsilon,\infty)\bigr)\leq
\rho T_\varepsilon A(T_\varepsilon)K(0)+L+\varepsilon \, .
\end{align*}
Letting $\rho\downarrow0$ and then $\varepsilon\downarrow0$ yields
$\displaystyle
\limsup_{\rho\downarrow0}\int_0^\infty K_{\rho}(t)\lambda(t)\,dt
\leq
L \, ,$
which proves the result.
\end{proof}

\vspace{5pt}

\begin{proof}[Proof of Proposition \ref{prop:relation_value}]
 By Theorem~\ref{thm:existence_uniqueness}, we may choose $\Lambda\in C^\infty$ with all derivatives bounded and $\Lambda(0)=0$. Applying Lemma~\ref{lemma:upper-sandwich} to
$\lambda(t)=\Lambda'(t)$ and $\lambda(t)=-\Lambda'(t)$ yields the required inequalities. Therefore, $(1)\Rightarrow(2)$, and $(2)\Rightarrow(3)$ by definition. We now prove that $(3)\Rightarrow(1)$. Define $s(t):= \frac{\Lambda(t)}{t}$ for $t>0$. Then $s$ is bounded by $|\Lambda'|_\infty$ and 
\begin{align*}
|s(t)-s(\rho t)| \, \le \,  \Big|\frac{\Lambda(t)-\Lambda(\rho t)}{t}\Big|+\Big|\frac{\Lambda(\rho t)}{\rho t}\Big|(1-\rho) \, \le \,  |\Lambda'|_\infty(1-\rho)+|s|_\infty(1-\rho)
\xrightarrow[\rho\to 1]{}
0.
\end{align*}
Let $K\in\K_W$ and define $k(t):=-tK'(t)$. Since $K\in \K_W\subset \K\cap {\rm AC}$, the function $K$ is nonincreasing and integrable on $\R_+$. Thus $k$ is nonnegative, and
$
0\leq t K(t)\le 2\int_{t/2}^t K(s)\,ds\to0
$
as $t\to\infty$. By integration by parts, we obtain
$
\int_0^\infty k(t)\,dt = \int_0^\infty K(t)\,dt = 1.
$ The claim $(3)\Rightarrow(1)$ is then a consequence of Theorems 3.4 and 8.1 in \citeauthor{Korevaar2004}~\cite[Chapter~II]{Korevaar2004}. 
\end{proof}

\subsection{Some useful results}\label{appendix}

We prove here three technical results from Stochastic Calculus and Real Analysis which we use in the paper. These results are classical but we recall them for completeness. For simplicity we set the time origin to $\tau =0$ and omit it in the notations, but they can be extended easily to general $\tau\in\R$.

\begin{lemma}\label{lem:girsanov_intZ_is_mg}
Let $\beta\in \mathbb{H}^0(\R^d)$ be a bounded process and $Z\in\mathbb{H}^2_T(\R^d)$ for the probability measure $\mathbb{P}$ and all $T>0$. Then the process $M_t:=\int_0^t Z_s\,dW_s^{\beta}$ is a $\mathbb{P}^{\beta}$-martingale.
\end{lemma}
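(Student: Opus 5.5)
The plan is to reduce to the classical fact that a stochastic integral against a Brownian motion, with square-integrable integrand, is a true martingale. Girsanov's theorem will be used on each finite horizon, and the only real work is checking square integrability under $\mathbb{P}^{\beta}$ rather than under $\mathbb{P}$.

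Fix $T>0$. Since $\beta$ is bounded, say $|\beta|\le B$, Novikov's condition holds for the stochastic exponential $\mathcal{E}_t:=\exp\big(\int_0^t\beta_s\cdot dW_s-\frac12\int_0^t|\beta_s|^2ds\big)$. Therefore $\mathcal E$ is a true $\mathbb{P}$-martingale on $[0,T]$, and $d\mathbb{P}^{\beta}=\mathcal{E}_T\,d\mathbb{P}$ on $\mathcal F_T$ makes $W^\beta=W-\int_0^\cdot\beta_sds$ a $\mathbb{P}^\beta$-Brownian motion on $[0,T]$. Moreover $\mathcal E_T\in L^p(\mathbb P)$ for every $p\ge1$: writing $\mathcal E_T^p$ as a stochastic exponential times $\exp(\frac{p^2-p}{2}\int_0^T|\beta|^2)$ gives $\mathbb{E}[\mathcal E_T^p]\le e^{(p^2-p)B^2T/2}$.

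Under $\mathbb{P}^\beta$, $M$ is a continuous local martingale with quadratic variation $\langle M\rangle_T=\int_0^T|Z_s|^2ds$. By the BDG inequality it suffices to show $\mathbb{E}^\beta[\langle M\rangle_T^{1/2}]<\infty$, since then $\mathbb{E}^\beta\sup_{t\le T}|M_t|<\infty$ and a local martingale dominated by an integrable variable is a true martingale on $[0,T]$. I would bound this by Cauchy–Schwarz:
$$\mathbb{E}^\beta\Big[\Big(\int_0^T|Z_s|^2ds\Big)^{1/2}\Big]=\mathbb{E}\Big[\mathcal E_T\Big(\int_0^T|Z_s|^2ds\Big)^{1/2}\Big]\le \big(\mathbb{E}[\mathcal E_T^2]\big)^{1/2}\Big(\mathbb{E}\int_0^T|Z_s|^2ds\Big)^{1/2}<\infty,$$
using the $L^2$ bound on $\mathcal E_T$ and $Z\in\mathbb H^2_T(\mathbb R^d)$ under $\mathbb P$. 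Since $T$ is arbitrary and the measures $\mathbb{P}^{\beta}$ on $\mathcal F_T$ are consistent, $M$ is a $\mathbb{P}^\beta$-martingale on every $[0,T]$.

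The only step needing care is this integrability transfer: $Z$ is only known to be in $\mathbb{H}^2$ under $\mathbb P$, not under $\mathbb P^\beta$. That is why I use BDG at exponent $1$, needing just $\mathbb{E}^\beta\langle M\rangle_T^{1/2}$, rather than trying to show $\mathbb{E}^\beta\int_0^T|Z|^2<\infty$, which would require more than an $L^2$ bound on $\mathcal E_T$. The remaining steps are routine.
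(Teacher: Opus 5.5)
Your proof is correct and is essentially the paper's argument: Girsanov gives the local martingale property, the density $\mathcal{E}_T$ is in $L^2(\mathbb{P})$, and BDG with exponent $1$ plus Cauchy--Schwarz give $\mathbb{E}^{\beta}\sup_{t\le T}|M_t|<\infty$, so $M$ is a true martingale on each $[0,T]$. Your bound $\mathbb{E}[\mathcal{E}_T^2]\le e^{B^2T}$ is the correct one; the paper's $e^{\|\beta\|_\infty T}$ is missing the square, though only finiteness is needed.
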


\begin{proof}
By Girsanov's Theorem, $M$ is a $\mathbb{P}^{\beta}$-local martingale, where $\frac{d\mathbb{P}^\beta}{d\mathbb{P}}=\mathcal{E}_\cdot(\beta) \coloneqq \exp( \int_0^\cdot \beta_s \cdot dW_s - \frac{1}{2} \int_0^\cdot |\beta_s|^2 ds)$. Moreover, $\E[\mathcal{E}_T(\beta)^2]\leq e^{||\beta||_\infty T}<\infty.$
By the Burkholder--Davis--Gundy inequality with $p=1$ and the Cauchy--Schwarz inequality, there exists a constant $C>0$ such that:
$$
\mathbb E^{\beta}\big[\sup_{0\le r\le T}| M_r|\big]
\le C\,\mathbb E^{\beta}\Big[\Big(\int_0^T |Z_s|^2\,ds\Big)^{1/2}\Big]
\le C\,\big(\mathbb E[\mathcal{E}_T(\beta)^2]\big)^{1/2}\Big(\mathbb E\int_0^T |Z_s|^2\,ds\Big)^{1/2}<\infty.  
$$
Hence the family $\{M_\tau:\tau\!\le\!T, \tau \text{ stopping time}\}$ is uniformly integrable, implying the required result.
\end{proof}

\begin{lemma}\label{lem:mollification}
For every $\Lambda\in {\rm{UC}}$, there exists a function $\overline{\Lambda}\in C^\infty$ such that $\overline\Lambda(0)=0$,
$\overline{\Lambda}-\Lambda$ and $\overline{\Lambda}^{(k)}$ are bounded for every $k\ge 1$, where $\overline{\Lambda}^{(k)}$ denotes the $k$-th derivative of $\overline{\Lambda}$.
\end{lemma}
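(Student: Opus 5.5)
The plan is to mollify $\Lambda$ by convolution with a smooth compactly supported kernel, after extending it to the whole real line. Since $\Lambda\in{\rm UC}$ is defined on $[0,\infty)$, first set $\tilde\Lambda(t):=\Lambda(0)$ for $t<0$ and $\tilde\Lambda:=\Lambda$ on $[0,\infty)$. This extension is still uniformly continuous on $\R$; call $\omega$ its modulus of continuity. A key preliminary fact is that a uniformly continuous function on $\R$ is sublinear at the scale $1$. Indeed, there is $\delta>0$ with $\omega(\delta)\le 1$, and chaining over at most $\lceil 1/\delta\rceil$ steps gives $|\tilde\Lambda(t)-\tilde\Lambda(s)|\le M$ whenever $|t-s|\le 1$, with $M:=\lceil 1/\delta\rceil$.

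Next, fix a nonnegative $\varphi\in C^\infty_c(\R)$ supported in $[-1,1]$ with $\int\varphi=1$, and define $\Lambda_1:=\tilde\Lambda*\varphi$, i.e. $\Lambda_1(t)=\int\tilde\Lambda(t-s)\varphi(s)\,ds$. Since $\tilde\Lambda$ is continuous and $\varphi$ is compactly supported, $\Lambda_1$ is well defined and $C^\infty$. The two required bounds then follow directly.
\begin{itemize}
\item[(a)] The difference is bounded: $|\Lambda_1(t)-\tilde\Lambda(t)|\le\int|\tilde\Lambda(t-s)-\tilde\Lambda(t)|\varphi(s)\,ds\le M$, by the scale-$1$ bound.
\item[(b)] The derivatives are bounded. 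For $k\ge1$, the identity $\int\varphi^{(k)}=0$ lets one subtract the constant $\tilde\Lambda(t)$, giving $\Lambda_1^{(k)}(t)=\int(\tilde\Lambda(t-s)-\tilde\Lambda(t))\varphi^{(k)}(s)\,ds$. Hence $|\Lambda_1^{(k)}|_\infty\le M\|\varphi^{(k)}\|_{L^1}$, uniformly in $t$.
\end{itemize}

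Finally, set $\overline\Lambda(t):=\Lambda_1(t)-\Lambda_1(0)$ for $t\ge0$. Then $\overline\Lambda(0)=0$, all its derivatives coincide with those of $\Lambda_1$ and are therefore bounded, and $\overline\Lambda-\Lambda$ is bounded by $M+|\Lambda_1(0)|$. At $t=0$ the right derivatives are just the restrictions of the two-sided derivatives of $\Lambda_1$, so $\overline\Lambda\in C^\infty$ in the sense of the notations.

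The only step that is not routine is the uniform bound in (a)–(b), namely replacing $\tilde\Lambda(t-s)$ by $\tilde\Lambda(t)$ at a cost independent of $t$. This is where uniform continuity, rather than mere continuity, is used, through the chaining bound $M$ and the vanishing integral of $\varphi^{(k)}$. Everything else is standard properties of convolution.
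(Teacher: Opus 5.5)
Your proof is correct and follows essentially the same route as the paper: extend $\Lambda$ to $\R$ (the paper uses the even extension rather than your constant one), mollify with a kernel supported in $[-1,1]$, bound $\overline{\Lambda}-\Lambda$ and the derivatives using $\int\varphi=1$, $\int\varphi^{(k)}=0$ and the oscillation bound over unit intervals, and finally subtract the value at $0$. Your chaining argument also makes explicit why this unit-scale oscillation constant is finite, a fact that the paper's definition of $C_\Lambda$ takes for granted.
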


\begin{proof}
By abuse of notation, we still denote by $\Lambda$ the even extension of $\Lambda$ to $\R$, which is also uniformly continuous. 
Let $\eta$ be a mollifier satisfying $\eta\in C^\infty(\R)$, $supp(\eta)\subset[-1,1]$, and $\int_\R \eta(x)\,dx=1$. Define:
\begin{equation*}
    \begin{array}{ccc}
         \displaystyle \overline{\Lambda} \coloneqq \Lambda*\eta=\int_\R \Lambda(\cdot-y)\eta(y)\,dy \, & \text{and} & \displaystyle C_\Lambda \, \coloneqq \hspace{-10pt} \sup_{x,x'\in\R,\ |x-x'|\leq 1}|\Lambda(x)-\Lambda(x')| \, .
    \end{array}
\end{equation*}
By the properties of the convolution, $\overline{\Lambda}\in C^\infty(\R)$. Moreover, since $\int_\R \eta(y)\,dy=1$ and $\int_\R \eta'(y)\,dy=0$, we obtain the two inequalities:
\begin{equation*}
    \begin{array}{l}
         \displaystyle |\Lambda(x)-\overline{\Lambda}(x)| = \Big|\int_\R \big(\Lambda(x)-\Lambda(x-y)\big)\eta(y)\,dy \Big| \leq C_\Lambda\int_\R |\eta(y)|\,dy \, , \\[15pt]
         \displaystyle |\overline{\Lambda}'(x)| = |\Lambda*\eta'(x)| = \Big|\int_\R \big(\Lambda(x)-\Lambda(x-y)\big)\eta'(y)\,dy \Big| \leq C_\Lambda\int_\R |\eta'(y)|\,dy \, .
    \end{array}
\end{equation*}
Similarly, for all $k \ge 2$, $\overline{\Lambda}^{(k)}$ is bounded by $C_\Lambda\int_\R |\eta^{(k)}(y)|\,dy$. Finally, we may replace $\overline{\Lambda}$ by $\overline{\Lambda}-\overline{\Lambda}(0)$ to ensure that $\overline{\Lambda}(0)=0$.
\end{proof}

\begin{lemma}\label{lem:cauchy_equation}
Let $(a_n)_{n\ge1}$ be a sequence in $\mathbb{R}$ satisfying $\sup_{m,n\ge 1}
|a_{n+m}-a_n-a_m|<\infty$.
Then there exists $\lambda\in\mathbb{R}$ such that $\sup_{n\ge1}|a_n-\lambda n|<\infty$. In particular, $\frac{a_n}{n}\xrightarrow[n\to\infty]{}\lambda$.
\end{lemma}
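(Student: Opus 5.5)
The plan is the classical Hyers argument for approximately additive sequences. Set $M:=\sup_{m,n\ge1}|a_{n+m}-a_n-a_m|<\infty$. Look along dyadic indices and define $b_k:=a_{2^k}/2^k$ for $k\ge0$. Taking $m=n=2^k$ gives $|a_{2^{k+1}}-2a_{2^k}|\le M$. Hence $|b_{k+1}-b_k|\le M2^{-(k+1)}$, so $(b_k)$ is Cauchy. Let $\lambda:=\lim_k b_k$. Summing the geometric tail gives $|a_{2^k}-\lambda 2^k|\le M$ for all $k$.

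More generally, I will extend this to all starting indices by doubling an arbitrary $n$. Taking $m=n$ gives $|a_{2n}-2a_n|\le M$. Iterating, for each fixed $n$,
\begin{equation*}
\Big|\frac{a_{2^k n}}{2^k n}-\frac{a_n}{n}\Big|\le \frac{M}{n}\sum_{j\ge1}2^{-j}\le \frac{M}{n}.
\end{equation*}
This shows that $\mu_n:=\lim_k a_{2^kn}/(2^kn)$ exists and satisfies $|a_n-\mu_n n|\le M$.

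The key step is to show that $\mu_n=\lambda$ for every $n$, i.e.\ that the limit along $2^kn$ does not depend on $n$. For this I use approximate additivity with $m=n$ replaced by general pairs. Induction on $p$ gives $|a_{pn}-p\,a_n|\le (p-1)M$. Dividing by $pn$ and applying this with $n$ replaced by $2^kn$ and $2^kn'$, in the form
\begin{equation*}
a_{2^k n n'}\approx n'\,a_{2^kn}\approx n\,a_{2^kn'},
\end{equation*}
shows that $\mu_n$ and $\mu_{n'}$ both equal $\lim_k a_{2^knn'}/(2^knn')$ up to errors of order $M/(2^k)$, which vanish. Hence $\mu_n=\mu_{nn'}=\mu_{n'}$, and taking $n'=1$ gives $\mu_n=\lambda$.

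Combining the two steps yields $|a_n-\lambda n|\le M$ for every $n$, which gives the uniform bound $\sup_n|a_n-\lambda n|<\infty$. Dividing by $n$ then gives $a_n/n\to\lambda$.

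I expect the main obstacle to be bookkeeping the multiplicative bound $|a_{pn}-pa_n|\le(p-1)M$ carefully enough that the errors stay $O(M)$ after dividing by $pn$. A cleaner alternative at this step is to bypass dyadics entirely and apply Fekete's lemma to the subadditive sequence $a_n+M$ and the superadditive sequence $a_n-M$. Fekete shows that both $(a_n+M)/n$ and $(a_n-M)/n$ converge to the same $\lambda$, with $\lambda=\inf_n (a_n+M)/n=\sup_n (a_n-M)/n$. This gives $a_n-M\le\lambda n\le a_n+M$ directly, and I would present this Fekete version as the final proof.
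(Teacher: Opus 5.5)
Your argument is correct, and the version you say you would present (via Fekete) takes a different route from the paper's. The paper uses neither dyadic subsequences nor subadditivity. It telescopes to get $|a_{mn}-m a_n|\le (m-1)M$, exchanges $m$ and $n$ to obtain $|m a_n-n a_m|\le (m+n-2)M$, and divides by $mn$, which gives $\bigl|\frac{a_n}{n}-\frac{a_m}{m}\bigr|\le \frac{M}{n}+\frac{M}{m}$. This single estimate shows that $(a_n/n)$ is Cauchy, and letting $m\to\infty$ yields $|a_n-\lambda n|\le M$ directly.

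This is exactly the inequality $|a_{pn}-p a_n|\le (p-1)M$ that you expected to be the main obstacle; it is a one-line telescoping. Once it is used symmetrically, your dyadic limits $\mu_n$ and the identification $\mu_n=\lambda$ become unnecessary, although your dyadic argument is valid.

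The Fekete version is shorter if you take the classical lemma for granted. It gives the same sharp constant $M$ together with the description $\lambda=\inf_n\frac{a_n+M}{n}=\sup_n\frac{a_n-M}{n}$, whereas the paper's proof is fully self-contained.

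When you write it up, make one step explicit. Fekete's lemma only gives $\frac{a_n+M}{n}\to\inf_k\frac{a_k+M}{k}\in[-\infty,a_1+M]$ and $\frac{a_n-M}{n}\to\sup_k\frac{a_k-M}{k}\in[a_1-M,+\infty]$. Since the two sequences differ by $\frac{2M}{n}\to0$, these limits coincide. The common value therefore lies in $[a_1-M,a_1+M]$, so it is a finite $\lambda$.
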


\begin{proof}
The claim is a discrete time version of the classical result of \citeauthor{Hyers1941}~\cite{Hyers1941}. Denoting $C:=\sup_{m,n\ge 1}
|a_{n+m}-a_n-a_m|$, we compute directly that:
\[
|a_{mn}-m a_n| \, =\, \Big|\sum_{j=0}^{m-1}\big(a_{(j+1)n}-a_{jn}-a_n\big)\Big|
\, \le \,  \sum_{j=0}^{m-1}|a_{(j+1)n}-a_{jn}-a_n| \, \le \, (m-1)C \, .
\]
Exchanging the roles of $m$ and $n$, we also have $|a_{mn}-n a_m|\le (n-1)C$, which implies that:
\[
|m a_n-n a_m|
\le |m a_n-a_{mn}|+|a_{mn}-n a_m|
\le (m+n-2)C \, .
\]
Dividing by $mn$, we see that $\left|\frac{a_n}{n}-\frac{a_m}{m}\right|
\le \frac{C}{n}+\frac{C}{m}$, which shows that $\left(\frac{a_n}{n}\right)_{n\ge1}$ is a Cauchy sequence in $\mathbb R$, so there exists $\lambda\in\mathbb R$ such that $\frac{a_n}{n}\to\lambda$. Finally, sending $m\to\infty$ in the last estimate induces the desired result by taking the supremum over $n\ge1$.
\end{proof}

\subsection{Mixing property for time-inhomogeneous Markov processes}\label{app:mixing_prop_sec}

In this section we use the notations introduced in Section \ref{sec:time_inhom} and we consider the stochastic differential equation \eqref{SDE_variant_bounded_measurable}, where $b,\Gamma: \mathbb{R} \times \mathbb{R}^d \to \mathbb{R}^d$ are measurable functions, with $\Gamma$ bounded and $b$ satisfying Assumption \ref{time_inhom_assumption}. In the remaining of the section, we denote by $B_R$ the ball of $\mathbb{R}^d$ of radius $R > 0$ and centered at $0$. The proof of Proposition \ref{prop:ergodicity} is based on the results of \citeauthor{hairer_mattingly_harris_2011}~\cite{hairer_mattingly_harris_2011} for one-step Markov semi-groups, and on the following density estimate for solutions of time-inhomogeneous SDEs, derived by \citeauthor{MENOZZI2021330}~\cite{MENOZZI2021330}. 

\begin{lemma} \label{lem:menozzi}
    Under the assumptions of Proposition \ref{prop:ergodicity}, the stochastic differential equation \eqref{SDE_variant_bounded_measurable} admits a weak solution unique in law for all time origin $\tau \in \mathbb{R}$. Denote by $p(s, x, t, y)$ its transition densities. Then under~\eqref{dissipativity_condition_inhom}, for all $R > 0$, there exist a non-negative measurable function $\varepsilon_R(s,t)$ and a probability measure on $B_R$ with density $q_R$ such that $\inf_{s\in\R} \varepsilon_R(s,s+1)>0$ and \begin{equation*}
        \begin{array}{cc}
             \displaystyle p(s, x, t, \cdot) \geq \varepsilon_R(s,t) \, q_R(\cdot) \, , & \text{for all } \,  x \in B_R \, , \;  s \leq t \in \R \, .
        \end{array}
    \end{equation*}
\end{lemma}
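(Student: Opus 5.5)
The plan is to treat weak existence/uniqueness first and the lower bound second. The drift $b+\Gamma$ is measurable, the additive noise is nondegenerate, and $\Gamma$ is bounded. I therefore construct the solution by Girsanov from the strong solution of $dX=b(t,X)\,dt+dW$. That strong solution exists by Assumption~\eqref{Lpz_condition_lip_inhom}. Under the new measure $\Gamma(t,X_t)$ appears as an additional bounded drift, and Novikov holds trivially on each $[\tau,T]$ since $\Gamma$ is bounded. Uniqueness in law follows from the same Girsanov argument run in reverse: any weak solution of \eqref{SDE_variant_bounded_measurable} is transformed into a weak solution of the Lipschitz SDE, and the latter is pathwise unique. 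This also shows that the transition law $P_{s,t}(x,\cdot)$ is equivalent to that of the Lipschitz SDE, hence has a density $p(s,x,t,\cdot)$.

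For the minorization, I would rely on the two-sided Gaussian (Aronson-type) heat kernel bounds of \cite{MENOZZI2021330}. They cover time-inhomogeneous SDEs with drift of linear growth plus a bounded measurable part and identity diffusion. These bounds read, for $0<t-s\le 1$,
\[
p(s,x,t,y)\ge C^{-1}(t-s)^{-d/2}\exp\Big(-C\frac{|\theta_{t,s}(x)-y|^2}{t-s}\Big),
\]
where $\theta_{t,s}(x)$ is the flow of a (mollified) deterministic drift. The constants depend only on $d$, $L_b$ and $|\Gamma|_\infty$, and in particular not on $s$, because all bounds on the coefficients are uniform in $t$. For $x\in B_R$ and $t-s\in[1/2,1]$, the flow satisfies $|\theta_{t,s}(x)|\le C(1+R)$ by Gronwall with the linear growth bound. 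So for $y\in B_R$ the exponent is bounded by a constant $c_R$. I then take $q_R:=|B_R|^{-1}\mathbf 1_{B_R}$ and $\varepsilon_R(s,t):=|B_R|\,C^{-1}e^{-c_R}\,2^{-d/2}$ for $t-s\in[1/2,1]$. For $t-s>1$, I use Chapman--Kolmogorov over $[s,t-1]$ together with a Lyapunov tail estimate. Dissipativity~\eqref{dissipativity_condition_inhom} gives $\sup_{t\ge s}\E|X^{s,x}_t|^2\le C(1+|x|^2)$, uniformly in $s$, since $\Gamma$ is bounded. Choosing $R'$ large, the process lies in $B_{R'}$ at time $t-1$ with probability at least $1/2$, and applying the one-step bound from $B_{R'}$ yields a positive $\varepsilon_R(s,t)$. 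For $t-s<1/2$ one may simply set $\varepsilon_R:=0$. Only $\inf_s\varepsilon_R(s,s+1)>0$ is required, and that follows directly from the case $t-s=1$.

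The main obstacle is checking that the lower Gaussian bound of \cite{MENOZZI2021330} applies with constants uniform in the initial time $s$ when the drift is unbounded and merely measurable in $t$. If their statement covers only bounded drifts, the fallback is to localize. One compares with the SDE whose drift is truncated outside $B_{2R'}$, which agrees with the original until the exit time. The probability of exiting before time $1$ starting from $B_{R'}$ is small, uniformly in $s$, by the linear growth bound. Restricted to paths that do not exit, the two densities coincide, which yields the same minorization.
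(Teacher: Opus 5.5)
Your proposal is correct and follows essentially the same route as the paper: Girsanov from the strong solution of the Lipschitz SDE gives weak existence and uniqueness in law; then the Menozzi--Pesce--Zhang Gaussian lower bound along the flow of the space-mollified drift, combined with a Gronwall bound on that flow over a unit time step, yields the minorization with $q_R$ uniform on $B_R$ (and your normalization $\varepsilon_R\propto |B_R|\,C^{-1}e^{-c_R}$ is the right one). Your extra treatment of $t-s>1$ via Chapman--Kolmogorov and the Lyapunov bound, and your localization fallback, are harmless but unnecessary: only $\inf_s\varepsilon_R(s,s+1)>0$ is used later, the paper effectively asserts $\varepsilon_R$ only at $t-s=1$, and the cited estimate already covers measurable drifts of linear growth with constants that are uniform under time shifts.
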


\proof
    Note that the SDE \eqref{SDE_variant_bounded_measurable} with $\Gamma \equiv 0$ has a strong solution which is path-wise unique, thus unique in law. Therefore, since $\Gamma$ is a bounded measurable function, the existence of a weak solution to \eqref{SDE_variant_bounded_measurable} and its uniqueness in law are a direct consequence of Girsanov's Theorem. Let $\rho$ be a nonnegative smooth function with support in $B_1$ satisfying $\int_{\R^d} \rho = 1$ and define $\tilde{b}(t,x) \coloneqq [(b+\Gamma)(t, \cdot) * \rho](x)$. In other words, $\tilde{b}$ is a mollification of the drift $b + \Gamma$ in the space variable, it is smooth and has linear growth. In \cite[Theorem~1.2]{MENOZZI2021330}, \citeauthor{MENOZZI2021330} establish an estimate for the transition densities $p(s,x,t,y)$, namely there exist $\varepsilon, \delta > 0$ such that:
    \begin{equation*}
        \begin{array}{cc}
             \displaystyle p(s, x, t, y) \geq \varepsilon \, g_{\delta}(t-s, \theta_{t,s}(x) -y) \, , & \text{for all } \, x, y \in \R^d \, , \;  t\geq s \, ,
        \end{array}
    \end{equation*}
    where $g_{\delta}(t, x) = t^{-d/2} \exp(- \frac{\delta|x|^2}{t})$ and $\theta$ is a deterministic flow solving:
    \begin{equation*}
        \begin{array}{cc}
             \dot{\theta}_{t,s}(x) = \tilde{b}(t, \theta_{t,s}(x)) \, , \; t \geq s \, , & \theta_{s,s}(x) = x \,.
        \end{array}
    \end{equation*}
    Let $C > 0$ denote a constant which we may adjust if needed. Since $|\tilde{b}(t,x)| \leq C(1+|x|)$, we obtain by Gronwall's Lemma that $|\theta_{t,s}(x)| \leq C(1 + |x|) e^{C(t-s)}$. Therefore, for all $R > 0$, the map $(x,y) \mapsto \theta_{t,t+1}(x)-y$ is bounded for $B_R\times B_R$, uniformly in $t$, by some constant $\theta_R$. Then we deduce that the claim holds with $q_R$ chosen as the density of the uniform distribution on $B_R$, and with
\begin{equation*}\label{eq:eta_q_def}
\varepsilon_R(s,t) \coloneqq \frac{\varepsilon}{|B_R|} g_{\delta}(t-s,\theta_R)
\geq
\frac{\varepsilon}{|B_R|} g_{\delta}(1,\theta_R)\mathbf{1}_{\{t-s=1\}} .
\end{equation*}

\vspace{-8mm}
\qed

\begin{proof}[Proof of Proposition \ref{prop:ergodicity}]
Note that this proof can also be found in a recent paper of \citeauthor{entrance_measure_inhomogeneous} \cite[Corollary~2.5]{entrance_measure_inhomogeneous} in a more general framework. However, given its recent release and the numerous technical assumptions made by the authors, we decided to write our own version to facilitate the readability. We proceed in three steps. First we leverage the dissipativity condition to obtain a Lyapunov function for the semi-group $P$. Then we prove the mixing property and finally we construct an evolution system of measure $(m_t)_{t \in \R}$ and show the uniqueness under the integrability assumption. \\

\vspace{-8pt}

\noindent\underline{Step 1:} We first observe that $|\cdot|^2$ is a Lyapunov function for the semi-group $P$, in the sense that there is a constant $K>0$ such that:

\vspace{-8pt}

\begin{equation}\label{eq:lyapunov_P}
\begin{array}{cc}
    P_{s,t} |\cdot|^2  \le e^{-\frac{\eta_0}{2} (t-s)} |\cdot|^2 + K \, , & s \leq t \in \R \, ,
\end{array}  
\end{equation}
Indeed, denote by $\mathcal{L}_t$ the infinitesimal generator of this transition semi-group. Then the dissipativity condition of Assumption \eqref{dissipativity_condition_inhom} and Young's inequality imply that:
\begin{equation*}
    \mathcal{L}_t(|\cdot|^2)(x) \, = \, 2 \, x \cdot \bigl(b(t,x)+\Gamma(t,x)\bigr) + 1
\, \le \,  -\eta_0 |x|^2 + 2\, x \cdot \bigl(b(t,0)+\Gamma(t,x)\bigr) + 1 \, \le \, -\frac{\eta_0}{2} (|x|^2-K)  \, ,
\end{equation*}
with $K=\frac{4}{\eta_0^2}|b(\cdot,0)+\Gamma(\cdot,\cdot)|_\infty^2
+\frac{2}{\eta_0}$. We finally obtain \eqref{eq:lyapunov_P} after applying Gronwall's Lemma. \\

\vspace{-8pt}

\noindent\underline{Step 2:} Given $\beta>0$, we define another distance between two probability measures:
\begin{equation*}
         \displaystyle d_{\beta}(\mu, \nu):=\sup_{\left|\frac{\phi(\cdot)}{1+\beta|\cdot|^2}\right|_\infty\leq 1 }\Big|\int_{\R^d} \phi(x) (\mu-\nu)(dx)\Big| = \int_{\R^d} (1+\beta|x|^2)\, |\mu-\nu|(dx) \,,
\end{equation*}
The space $\mathcal{P}_2$ is complete when endowed with the distance $d_\beta $ and closed under the semi-group $P$ by \eqref{eq:lyapunov_P}. Let $R > \frac{2K}{1-e^{-\eta_0 /2}}$. By Lemma \ref{lem:menozzi} and a slight adaptation of \citeauthor{hairer_mattingly_harris_2011} \cite[Theorem 3.1]{hairer_mattingly_harris_2011}, see also \citeauthor{entrance_measure_inhomogeneous} \cite[Lemma~2.2]{entrance_measure_inhomogeneous}, we have, for all $\mu, \nu \in \mathcal{P}_2$ and $s < t \in \R$,
$$
        %\hspace{-10pt} 
        \displaystyle d_{\beta}(\mu P_{s,t}, \nu P_{s,t}) \leq \alpha(s,t) d_{\beta}(\mu, \nu),
        ~\text{where}~
        \displaystyle \alpha(s,t) \coloneqq \max \Big\{ 1 \!-\! \varepsilon_R(s,t)\!+\! \beta K \, , \frac{2 + \beta( R e^{- \frac{\eta_0}{2}(t-s)} + 2 K)}{2 + \beta R} \Big\} .
$$
Note that $\alpha(s,t) \leq 1 + \beta K$. Moreover, since $\inf_n \varepsilon_R(n,n+1)>0$ and $R > \frac{2K}{1-e^{-\eta_0 /2}}$, there exists $\beta\in(0,1)$ such that $\eta \coloneqq -\log(\alpha(0, 1)) = -\log(\alpha(n,n+1))>0$, for all $n \in \mathbb{Z}$. We obtain, using the discretization $s < \lfloor s \rfloor + 1 < \lfloor s \rfloor +2 < \dots < \lfloor t \rfloor \leq t$ for $s \leq t \in R$,
\begin{equation}\label{eq:mixing_prop_proof}
    d_{\beta}(\mu P_{s,t} \, , \nu P_{s,t}) \leq (1 + \beta K)^2 \,  e^{-\eta (\lfloor t \rfloor - \lfloor s \rfloor-1)} \, d_{\beta}(\mu, \nu) \mathbf{1}_{\{t-s \geq 1 \}} + (1 + \beta K) \, d_{\beta}(\mu, \nu) \mathbf{1}_{\{ t-s < 1 \}} \, .
\end{equation}
Noting that $\lfloor t \rfloor - \lfloor s \rfloor-1 \geq t-s-2$ and $\beta d_\beta\leq \beta  d_{\rm{TV},2}\leq d_\beta$ since $\beta\in(0,1)$, \eqref{eq:mixing_prop_proof} implies 
the mixing property \eqref{feller_inhom} with $C \coloneqq \frac{e^{2\eta}(1 + \beta K)^2}{\beta}$. \\

\vspace{-8pt}

\noindent\underline{Step 3:} We construct an evolution system of measures $(m_t)_{t \in \R}$ for the semi-group $P$. Let $m_{s,t} \coloneqq \delta_0 P_{s,t}$ for $s \leq t \in \R$ and note that $\sup_{s,t \in \R} m_{s,t}(|\cdot|^2) \leq K$ by \eqref{eq:lyapunov_P}. Then, by \eqref{eq:mixing_prop_proof} and \eqref{eq:lyapunov_P}, for all $n,k \in \mathbb{Z}$ with $n \leq k \leq t$,
\begin{equation*}
    d_{\beta}(m_{n,t} \, , m_{k, t}) \, = \, d_{\beta}(\delta_0 P_{n,k} P_{k, t} \, , \delta_0 P_{k,t}) \, \leq \, C e^{-\eta (t-k)} \, d_{\beta}(\delta_0 P_{n,k} \, , \delta_0) \, \leq \, C e^{-\eta (t-k)} \, (2 + \beta K) \, .
\end{equation*}
Hence, $(m_{k,t})_{k \in \mathbb{Z}}$ is a Cauchy sequence for $d_{\beta}$, and therefore converges to some $m_t \in \mathcal{P}_2$ satisfying $m_t(|\cdot|^2) \leq K$ and $m_s P_{s,t} = m_t$ for all $s \leq t$. Finally, let $(\tilde{m}_t)_{t \in \R}$ be another evolution system of measures for the semi-group $P$ satisfying the integrability condition $\sup_{t \in \R} \tilde{m}_t(|\cdot |^2) < \infty$. Then,
\begin{equation*}
    d_{{\rm{TV}}, 2}( m_t \, , \tilde{m}_t) \, = \, d_{{\rm{TV}}, 2}(m_s P_{s,t} \, , \tilde{m}_s P_{s,t}) \, \leq \, C \big(1 + \sup_{\tau \in \R} m_{\tau}(|\cdot|^2) + \sup_{\tau \in \R} \tilde{m}_{\tau}(|\cdot|^2)\big) \, e^{-\eta (t-s)},~s \leq t \in \R.
\end{equation*}
Sending $s \rightarrow - \infty$, we obtain $d_{{\rm{TV}}, 2}( m_t \, ,  \tilde{m}_t) \leq 0$, which implies that $m_t = \tilde{m}_t$.
\end{proof}

\end{spacing}
\vspace{-5mm}
\begingroup
\printbibliography

@incollection{hairer_mattingly_harris_2011,
  author    = {Hairer, Martin and Mattingly, Jonathan C.},
  title     = {Yet another look at Harris' ergodic theorem for Markov chains},
  booktitle = {Seminar on Stochastic Analysis, Random Fields and Applications VI},
  series    = {Progress in Probability},
  volume    = {63},
  pages     = {109--117},
  publisher = {Birkh{\"a}user/Springer Basel AG},
  address   = {Basel},
  year      = {2011}
}

@article{MENOZZI2021330,
title = {Density and gradient estimates for non degenerate Brownian SDEs with unbounded measurable drift},
journal = {Journal of Differential Equations},
volume = {272},
pages = {330-369},
year = {2021},
issn = {0022-0396},
author = {S. Menozzi and A. Pesce and X. Zhang}
}

@misc{wu2026ergodiclinearquadraticoptimalcontrol,
      title={The Ergodic Linear-Quadratic Optimal Control Problems with Random Periodic Coefficients}, 
      author={Wu, J. and Zhang, Q.},
      year={2026},
      eprint={2601.08672},
      archivePrefix={arXiv},
      primaryClass={math.OC}, 
}

@article{Korevaar2004,
author = {Korevaar, J.},
year = {2004},
pages = {},
title = {Tauberian theory. A century of developments},
journal = {Journal of High Energy Physics}
}

@article{BishopFeinbergZhang2014,
  author  = {Bishop, C. J. and Feinberg, E. A. and Zhang, J.},
  title   = {Examples concerning Abel and Ces{\`a}ro limits},
  journal = {Journal of Mathematical Analysis and Applications},
  year    = {2014},
  volume  = {420},
  number  = {2},
  pages   = {1654--1661},
  issn    = {0022-247X}
}

@article{Hyers1941,
  author  = {Hyers, D. H.},
  title   = {On the Stability of the Linear Functional Equation},
  journal = {Proceedings of the National Academy of Sciences of the United States of America},
  year    = {1941},
  volume  = {27},
  number  = {4},
  pages   = {222--224}
}

@article{Guo_Huang_Zhang,
    author = {Guo, X. and Huang, Y. and Zhang, Y.} ,
    title = {On Average Optimality for Non-Stationary Markov Decision Processes in Borel Spaces} ,
    journal = {Mathematics of Operations Research} ,
    year = {2024}
}

@article{furhman_ergodic,
    author = {Furhman, M. and Hu, Y. and Tessitore, G.} ,
    title = {Ergodic BSDEs and Optimal Ergodic Control in Banach Spaces} ,
    journal = {Siam J. Control Optim..} ,
    volume = {48},
    number = {3},
    pages = {1542-1566},
    year = {2009}
}

@article{ergodic_weak_dissipative,
    author = {Debussche, A. Hu, Y. and Tessitore, G.} ,
    title = {Ergodic BSDEs under weak dissipative assumptions} ,
    journal = {Stochastic Processes and their Applications} ,
    volume = {121},
    pages = {407-426},
    year = {2011}
}

@article{Arisawa_Lions,
    author = {Arisawa, M. and P.-L. Lions} ,
    title = {On ergodic stochastic control} ,
    journal = {Communications in Partial Differential Equations} ,
    pages = {2187-2217},
    volume = {23:11-12},
    year = {1998}
}

@article{Bensoussan_Frehse ,
    author = {Bensoussan, A. and Frehse, J.} ,
    title = {On Bellman Equations of Ergodic Control in $\mathbb{R}^n$} ,
    journal = {Journal für die reine und angewandte Mathematik} ,
    volume = {429},
    pages = {125-160},
    year = {1992}
}

@article{Lasry_Lions_state_constraints,
    author = {Lasry, J.-M. and Lions, P.-L.} ,
    title = {Nonlinear Elliptic Equations with Singular Boundary Conditions and Stochastic Control with State Constraints} ,
    journal = {Math. Ann.} ,
    volume = {283},
    pages = {583-630},
    year = {1989}
}

@article{Benaim_ergodicity_inhomogeneous,
    author = {Benaïm, M. and Bouguet, F. and Cloez, B.} ,
    title = {Ergodicity of Inhomogeneous Markov Chains Through Asymptotic Pseudotrajectories} ,
    journal = {The Annals of Probability} ,
    volume = {27},
    number = {5},
    pages = {3004-3049},
    year = {2017}
}

@article{principal_eigenvalues_parabolic,
    author  = {Berestycki, H. and Nadin, G. and Rossi, L.},
    title   = {Generalized principal eigenvalues for parabolic operators in bounded domains},
    journal = {Annali Scuola Normale Superiore - Classe di Scienze},
    volume  = {38},
    year    = {2025}
}

@book{Ergodic_FSDE,
    author = {Bao, J. and Yin, G. and Yuan, C.} ,
    title = {Asymptotic Analysis for Functional Stochastic Differential Equations} ,
    publisher = {Springer} ,
    year = {2016}
}

@article{LQ_italiennes,
    author = {Guatteri, G. and Masiero, F.} ,
    title = {Infinite Horizon and Ergodic Optimal Quadratic Control for an Affine Equation with Stochastic Coefficients} ,
    journal = {Siam J. Control Optim.} ,
    volume = {48},
    number = {3},
    pages = {1600-1631},
    year = {2009}
}

@book{zhang_BSDE,
    author = {Zhang, J.} ,
    title = {Backward Stochastic Differential Equations} ,
    publisher = {Springer} ,
    year = {2017}
}

@article{ergodic_BSDE_neumann,
    author = {Richou, A.} ,
    title = {Ergodic BSDEs and related PDEs with Neumann boundary conditions} ,
    journal = {Stochastic Processes and their Applications} ,
    volume = {19},
    pages = {2945-2969},
    year = {2009}
}

@article{ergodic_performance_process,
    author = {Liang, G. and Zariphopoulou, T.} ,
    title = {Representation of Homothetic Forward Performance Processes in Stochastic Factor Models via Ergodic and Infinite Horizon BSDE},
    journal = {Siam J. Financial Math.} ,
    volume = {8},
    pages = {344-372},
    year = {2017}
}

@article{ergodic_bsde_superquadratic,
    author = {Jackson, J. and Liang, G.} ,
    title = {A new monotonicity condition for ergodic BSDEs and ergodic control with super-quadratic hamiltonians} ,
    journal = {SIAM J. Control Optim.} ,
    volume = {61},
    number = {3},
    pages = {1273-1296},
    year = {2023}
}

@article{FSDE_infinite_delay,
    author = {Wu, F. and Yin, G. and Mei, H.} ,
    title = {Stochastic functional differential equations with infinite delay: Existence and uniqueness of solutions, solution maps, Markov properties, and ergodicity} ,
    journal = {J. Differential Equations} ,
    volume = {262},
    pages = {1226–1252},
    year = {2017}
}

@article{HJB_delay,
    author = {Furhman, M. and Masiero, F. and Tessitore, G.} ,
    title = {Stochastic Equations with Delay: Optimal Control via BSDEs and Regular Solutions of Hamilton-Jacobi-Bellman Equations} ,
    journal = {Siam J. Control Optim.} ,
    volume = {48},
    number = {7},
    pages = {4624-4651},
    year = {2010}
}

@article{Infinite_delay_nonhomogeneous,
    author = {Pu, Z. and Pan, Z. and Li, D.} ,
    title = {The Existence of Evolution Systems of Measures of Non-autonomous Stochastic Differential Equations with Infinite Delays} ,
    journal = {Qualitative Theory of Dynamical Systems} ,
    volume = {159},
    year = {2022}
}

@article{Goldys_Maslowski,
    author = {Goldys, B. and Maslowski, B.} ,
    title = {Ergodic Control of Semilinear Stochastic Equations and the Hamilton-Jacobi Equation} ,
    journal = {Journal of Mathematical Analysis and Applications} ,
    pages = {592-631},
    year = {1999}
}

@article{OU_periodic,
    author = {Da Prato, G. and Lunardi, A.} ,
    title = {Ornstein–Uhlenbeck operators with time periodic coefficients} ,
    journal = {Journal of Evolution Equations} ,
    volume = {7},
    pages = {587–614},
    year = {2007}
}

@article{Evolution_system_measures,
    author = {Da Prato, G. and Röckner, M} ,
    title = {A note on non autonomous stochastic differential equations} ,
    journal = {Proceedings of the 5th Seminar on Stochastic Analysis, Random Fields and Applications} ,
    year = {2005}
}

@book{Mohammed_FSDE,
    author = {Mohammed, S. E. A.} ,
    title = {Stochastic  Differential Systems with Memory: Theory, Examples and Applications} ,
    publisher = {Decreusefond, L., Øksendal, B., Gjerde, J., Üstünel, A.S. (eds) Stochastic Analysis and Related Topics VI. Progress in Probability} ,
    volume = {42},
    year = {1998}
}

@article{Feo_control_delay,
    author = {De Feo, F. and Federica, S. and Swiech, A.} ,
    title = {Optimal Control of Stochastic Delay Differential Equations and Applications to Path-Dependent Financial and Economic Models} ,
    journal = {Siam J. Control. Optim.} ,
    volume = {62},
    number = {3},
    pages = {1490-1520},
    year = {2024}
}

@article{zero_sum_non_stationary_ergodic,
    author = {Zheng, Z. and Guo, X.} ,
    title = {Zero-Sum Non-Stationary Stochastic Games with the Long-Run Average Criterion} ,
    journal = {Applied Mathematics and Optimization} ,
    volume = {90},
    year = {2024}
}

@book{ergodic_control_diffusion_processes,
    author = {Arapostathis, A. and Borkar, V.S. and Ghosh, M.K.} ,
    title = {Ergodic Control of Diffusion Processes} ,
    publisher = {Cambridge University Press} ,
    year = {2011}
}

@article{Harnack_inequalities,
    author = {H\'{u}ska, J. and Pol\'{a}\v{c}ik, P. and Safonov, M. V.} ,
    title = {Harnack inequalities, exponential separation, and pertubations of principal Floquet bundles for linear parabolic equations} ,
    journal = {Annales de l'Institut Henri Poincaré} ,
    volume = {24},
    pages = {711-739},
    year = {2007}
}

@article{quasitrationary_ergodic,
    author = {Budhiraja, A. and Dupuis, P. and Nyquist, P. and Wu, G.-J.} ,
    title = {Quasistationary Distributions and Ergodic Control Problems} ,
    journal = {Elsevier} ,
    year = {2021}
}

@article{principal_floquet_bundle,
    author = {H\'{u}ska, J. and Pol\'{a}\v{c}ik, P.} ,
    title = {The Principal Floquet Bundle and Exponential Separation for Linear Parabolic Equations} ,
    journal = {Journal of Dynamics and Differential Equations} ,
    volume = {6},
    number = {2},
    year = {2004}
}

@article{cohen_periodic_EBSDE,
    author = {Cohen, S. N. and Fedyashov, V.} ,
    title = {Ergodic BSDEs with jumps and time dependence} ,
    journal = {arXiv:1406.4329v2} ,
    year = {2015}
}

@article{Ergodicity_neutral_SDE,
    author = {Bao, J. and Wang, F.-Y. and Yuan, C.} ,
    title = {Ergodicity for neutral type SDEs with infinite length of memory} ,
    journal = {Mathematische Nachrichten} ,
    volume = {293},
    pages = {1675-1690},
    year = {2020}
}

@article{Lpz_approx,
    author = {Bogachev, V.I. and Shkarin, S.A.} ,
    title = {Differentiable and Lipschitzian mappings of Banach spaces} ,
    journal = {Math. Notes} ,
    volume = {44},
    pages = {790–798},
    year = {1988}
}

@article{Expo_mixing_finite_delay,
    author = {Butkovsky, O. and Scheutzow, M.} ,
    title = {Invariant measures for stochastic functional differential equations} ,
    journal = {Electron. J. Probab.} ,
    number = {98},
    pages = {1-23},
    year = {2017}
}

@article{hu_lemonier,
    author = {Hu, Y. and Lemonnier, F.} ,
    title = {Ergodic BSDE with unbounded and multiplicative underlying diffusion and application to large time behaviour of viscosity solution of HJB equation} ,
    journal = {Stochastic Processes and their Applications} ,
    volume = {129},
    pages = {4009-4050},
    year = {2019}
}

@article{borkar,
    author = {Borkar, V. S. and Ghosh, M. K.} ,
    title = {Ergodic control of multidimensional diffusions I: The existence results} ,
    journal = {SIAM Journal on Control and Optimization} ,
    volume = {26},
    number = {1},
    pages = {112-126},
    year = {1988}
}

@article{entrance_measure_inhomogeneous,
    author = {Feng, C. and Qu, B. and Zhao, H.} ,
    title = {Entrance measures for semigroups of time-inhomogeneous SDEs: possibly degenerate and expanding} ,
    journal = {arXiv:2307.07891v1} ,
    year = {2023}
}
\endgroup

\end{document}